%2multibyte Version: 5.50.0.2960 CodePage: 936
% to change the enumerate label
% this is necessary to invoke ecconometrica bibtex style
% Create books for pdf file
%To produce nice table
% For cross-referencing
% For cross-referencing
% Number equation by section
% To construct table cells that span more than one row of the table
% To allow figure/table to be placed at right here.
% For cross-referencing
% To make text red
%\input{tcilatex}
%\input{tcilatex}
%\input{tcilatex}
%\input{tcilatex}

\documentclass[11pt,a4paper]{article}
%%%%%%%%%%%%%%%%%%%%%%%%%%%%%%%%%%%%%%%%%%%%%%%%%%%%%%%%%%%%%%%%%%%%%%%%%%%%%%%%%%%%%%%%%%%%%%%%%%%%%%%%%%%%%%%%%%%%%%%%%%%%%%%%%%%%%%%%%%%%%%%%%%%%%%%%%%%%%%%%%%%%%%%%%%%%%%%%%%%%%%%%%%%%%%%%%%%%%%%%%%%%%%%%%%%%%%%%%%%%%%%%%%%%%%%%%%%%%%%%%%%%%%%%%%%%
\usepackage[margin=1in,a4paper]{geometry}
\usepackage{commath}
\usepackage{amsfonts}
\usepackage{amsthm}
\usepackage{amsmath}
\usepackage{amssymb}
\usepackage{enumerate}
\usepackage[longnamesfirst]{natbib}
\usepackage[bookmarks,colorlinks = true,linkcolor = blue,urlcolor  = blue,citecolor =
blue,anchorcolor = blue]{hyperref}
\usepackage{booktabs}
\usepackage{xr}
\usepackage{graphicx}
\usepackage{multirow}
\usepackage{float}
\usepackage{color}

\setcounter{MaxMatrixCols}{10}
%TCIDATA{OutputFilter=LATEX.DLL}
%TCIDATA{Version=5.50.0.2960}
%TCIDATA{Codepage=936}
%TCIDATA{<META NAME="SaveForMode" CONTENT="1">}
%TCIDATA{BibliographyScheme=BibTeX}
%TCIDATA{LastRevised=Wednesday, October 21, 2020 19:34:02}
%TCIDATA{<META NAME="GraphicsSave" CONTENT="32">}
%TCIDATA{Language=American English}

\newtheorem{lemma}{Lemma}[section]

\newtheorem{assumption}{Assumption}[section]

\newtheorem{thm}{Theorem}[section]
\newtheorem{corollary}{Corollary}[section]
\numberwithin{equation}{section}
\DeclareMathOperator*{\ve}{vec}
\DeclareMathOperator*{\vech}{vech}
\DeclareMathOperator*{\tr}{tr}
\DeclareMathOperator*{\var}{var}
\DeclareMathOperator*{\cov}{cov}

\linespread{1.3}

\begin{document}

\title{Estimation of the Kronecker Covariance Model by Quadratic Form\thanks{%
We thank participants at the Celebration of Peter Phillips' Forty Years at
Yale held at Yale University on October 19th-20th, 2018, and at the CUHK
Workshop on Econometrics 2019 for helpful comments. We thank Liang Jiang,
Chen Wang, Tengyao Wang for useful discussions. Detailed comments from the
Co-Editor and two anonymous referees also greatly improved the article. Any
remaining errors are our own.}}
\author{Oliver B. Linton\thanks{%
Faculty of Economics, Austin Robinson Building, Sidgwick Avenue, Cambridge,
CB3 9DD. Email: \texttt{obl20@cam.ac.uk.}Thanks to the Cambridge INET and
the Keynes Fund for financial support. } \\
%EndAName
University of Cambridge \and Haihan Tang\thanks{%
Corresponding author. Fanhai International School of Finance, Fudan
University. 220 Handan Road, Yangpu District, Shanghai, 200433, China.
Email: \texttt{hhtang@fudan.edu.cn}. Haihan Tang is sponsored by the
National Natural Science Foundation of China (grant number 71903034) and
Shanghai Pujiang Program (grant number 2019PJC015).} \\
%EndAName
Fudan University}
\date{\today}
\maketitle

\begin{abstract}
\noindent We propose a new estimator, the quadratic form estimator, of the
Kronecker product model for covariance matrices. We show that this estimator
has good properties in the large dimensional case (i.e., the cross-sectional
dimension $n$ is large relative to the sample size $T$). In particular, the
quadratic form estimator is consistent in a relative Frobenius norm sense
provided $\log^3n/T\to 0$. We obtain the limiting distributions of the
Lagrange multiplier (LM) and Wald tests under both the null and local
alternatives concerning the mean vector $\mu$. Testing linear restrictions
of $\mu$ is also investigated. Finally, our methodology is shown to perform
well in finite sample situations both when the Kronecker product model is
true, and when it is not true.
\end{abstract}

\textit{Some} \textit{key words: }Covariance matrix; Kronecker product;
Quadratic form; Lagrange multiplier test; Wald test

\section{Introduction}

Covariance matrices are of great importance in many fields. In finance, they
are a key element in portfolio choice and risk management (\cite%
{markowitz1952}). In psychology, scholars have long assumed that some
observed variables are related to certain latent traits through a factor
model, and then use the covariance matrix of the observed variables to
deduce properties of the latent traits. In econometrics, covariance matrices
often appear in test statistics representing the sampling variability of a
vector of parameter estimates. \cite{anderson1984} is a classic statistical
reference that studies estimation of and hypothesis testing about covariance
matrices in the low dimensional case (i.e., the dimension of the covariance
matrix, $n$, is small compared with the sample size $T$).

There are many new methodological approaches to covariance and precision
matrix estimation in the large dimensional case (i.e., $n$ is large compared
with $T$);\footnote{%
Some studies have made a distinction between the large dimensional case and
the high dimensional case (\cite{hafnerlintontang2018}). We no longer make
this distinction in this article. As long as $n$ is large relative to $T$,
regardless of $n$ exceeding $T$, we call it the large dimensional case.}
see, e.g., \cite{ledoitwolf2003}, \cite{bickellevina2008}, \cite%
{fanfanlv2008}, \cite{ledoitwolf2012}, \cite{fanliaomincheva2013}, and \cite%
{ledoitwolf2015}. \cite{fanliaoliu2016econometricsJ} gave an excellent
account of the recent developments in theory and practice of estimating
large dimensional covariance matrices. The usual approaches include: to
impose some sparsity on the covariance matrix, meaning that many elements of
the covariance matrix are assumed to be zero or small, thereby reducing the
number of parameters to be estimated; or at least to "shrink" towards a
sparse matrix, or to use a factor model which reduces the dimensionality of
the parameter space. Most of this literature assumes i.i.d. data.

We consider the problem of estimating a large covariance matrix $\Sigma .$
We impose a model structure that reduces the effective dimensionality. In
particular, we consider the Kronecker product model. Let $n=n_{1}\times
\cdots \times n_{v}$, where $n_{j}\in \mathbb{Z}$ and $n_{j}\geq 2$ for $%
j=1,\ldots ,v$. We suppose that 
\begin{equation}
\Sigma =\sigma ^{2}\times \Sigma _{1}\otimes \cdots \otimes \Sigma _{v},
\label{model1}
\end{equation}%
where $\Sigma _{j}$ is an $n_{j}\times n_{j}$ unknown covariance matrix
satisfying $\mathrm{tr}(\Sigma _{j})=n_{j}$ for $j=1,\ldots ,v,$ and $%
0<\sigma ^{2}<\infty $ is a scalar parameter.

Kronecker product models arise naturally from \textit{multiway data} (c.f. 
\cite{kroonenberg2008}). Multiway data are a generalization of two-way or
three-way data that are widely encountered in social science. For example,
the scores on 3 subjects (mathematics, English and music) of 50 students
observed over 10 years are three-way data, the "ways" being subjects,
students and years. Let $w_{i,j,t}$ denote the score of subject $i$ of
student $j$ in year $t$. To model $w_{i,j,t}$, one could use an interactive
effects model similar to \cite{bai2009}: 
\begin{equation*}
w_{i,j,t}=\mu _{i,j}+\gamma _{i,t}f_{j,t},\qquad i=1,2,3,\quad j=1,\ldots
,50,\quad t=1,\ldots ,10
\end{equation*}%
where $\mu _{i,j}$ is the subject-student specific mean, while $\gamma
_{i,t} $ and $f_{j,t}$ are the subject-time specific and student-time
specific effects, respectively. Stacking all the observations $\{w_{i,j,t}\}$
of year $t$ into a $150\times 1$ column vector $y_{t}$, we have $y_{t}=\mu
+\gamma _{t}\otimes f_{t}$, where $\mu $ is the $150\times 1$ mean vector
containing stacked $\{\mu _{i,j}\}$, $\gamma _{t}=(\gamma _{1,t},\gamma
_{2,t},\gamma _{3,t})^{\intercal }$, and $f_{t}=(f_{1,t},\ldots
,f_{50,t})^{\intercal }$. Suppose that $\gamma _{t}$ is a random vector
independent of $f_{t}$, and that both are mean-zero and stationary in time.
Then, 
\begin{equation*}
\mathbb{E}[(y_{t}-\mu )(y_{t}-\mu )^{\intercal }]=\mathbb{E}[\gamma
_{t}\gamma _{t}^{\intercal }]\otimes \mathbb{E}[f_{t}f_{t}^{\intercal }].
\end{equation*}%
In this case the covariance matrix of $y_{t}$ is a Kronecker product of two
sub-matrices, which describe the subject specific and individual specific
dependencies.

Extending the idea to multiway data, one might think of a typical equity
portfolio constructed by intersections of 5 size quintiles, 5 book-to-market
ratio quintiles, and 10 industries, in the spirit of \cite{famafrench1993},
over a number of years, as four-way data: sizes $\times$ B/P ratios $\times$
industries $\times$ years. Situations in which higher-way data are collected
are also on the increase. For example, electroencephalography (EEG), a
non-invasive way of detecting structural abnormalities such as brain tumors,
also provide multiway data, such as EEG bands $\times$ patients $\times$
leads $\times$ doses $\times$ time $\times$ task conditions (\cite%
{estienneetal2001}).

Consider $(v+1)$-way data $w_{i_{1},i_{2},\ldots ,i_{v},t}$, where $%
i_{j}=1,\ldots ,n_{j}$ for $j=1,\ldots ,v$ and $t=1,\ldots ,T$. We use
subscript $t$ to denote the $(v+1)$th way of the data in the hope to \textit{%
broadly} interpret the $(v+1)$th way as "time", $T$ as the sample size, all
other ways as the "cross-section", and $n:=n_{1}\times \cdots \times n_{v}$
as the cross-sectional dimension. In other words, the $(v+1)$th way of the
data need not correspond to the time dimension, should the multiway data
contain such a dimension. In the rest of the article, we shall no longer
stress this distinction. Suppose that $w_{i_{1},i_{2},\ldots ,i_{v},t}=\mu
_{i_{1},i_{2},\ldots ,i_{v}}+\varepsilon _{i_{1},t}^{1}\varepsilon
_{i_{2},t}^{2}\cdots \varepsilon _{i_{v},t}^{v},$ where $i_{j}=1,\ldots
,n_{j}$ for $j=1,\ldots ,v,$ and $t=1,\ldots ,T$. Equivalently, in the
stacked form 
\begin{equation*}
y_{t}:=(w_{1,1,\ldots ,1,t},\ldots ,w_{n_{1},n_{2},\ldots
,n_{v},t})^{\intercal }=\mu +\varepsilon _{t}^{1}\otimes \varepsilon
_{t}^{2}\otimes \cdots \otimes \varepsilon _{t}^{v},
\end{equation*}%
where $\mu $ is the stacked mean vector, $\varepsilon _{t}^{j}:=(\varepsilon
_{1,t}^{j},\ldots ,\varepsilon _{n_{j},t}^{j})^{\intercal }$ is an $%
n_{j}\times 1$ mean-zero random vector with covariance matrix $\mathbb{E}%
[\varepsilon _{t}^{j}\varepsilon _{t}^{j\intercal }]$ for all $t$ for $%
j=1,\ldots ,v$. If $\varepsilon _{t}^{1},\ldots ,\varepsilon _{t}^{v}$ are
mutually independent for all $t$, then 
\begin{equation*}
\mathbb{E}[(y_{t}-\mu )(y_{t}-\mu )^{\intercal }]=\mathbb{E}[\varepsilon
_{t}^{1}\varepsilon _{t}^{1\intercal }]\otimes \mathbb{E}[\varepsilon
_{t}^{2}\varepsilon _{t}^{2\intercal }]\otimes \cdots \otimes \mathbb{E}%
[\varepsilon _{t}^{v}\varepsilon _{t}^{v\intercal }].
\end{equation*}%
We hence see that the covariance matrix of $y_{t}$ is a Kronecker product of 
$v$ sub-matrices.

Recent work on Kronecker product models for multiway data include \cite%
{hoff2011}, \cite{hoff2015}, \cite{hoff2016} etc. Kronecker product models
have also been considered in the psychometric literature (\cite%
{campbelloconnell1967}, \cite{swain1975}, \cite{cudeck1988}, \cite%
{verheeswansbeek1990} etc). In the spatial literature, there are a number of
studies that consider Kronecker product models for the correlation matrix of
a random field (\cite{lohlam2000}). \cite{robinson1998} and \cite%
{hidalgoschafgans2017} exploited separable error covariance matrix
structures to develop inference methods without the need for smoothing.

These literatures have all focussed on the low dimensional case. \cite%
{hafnerlintontang2018} were the first to study Kronecker product models in
the large dimensional case. The proper framework for studying the large
dimensional case is the joint limit setting developed by \cite%
{phillipsmoon1999} in which $n$ and $T$ tend to infinity simultaneously.%
\footnote{%
Peter Phillips has made some fundamental contributions to large dimensional
analysis. \cite{phillipsmoon1999} provided three asymptotic frameworks for
analysing double-index ($n,T$) processes: sequential limit framework (e.g., $%
n\rightarrow \infty $ followed by $T\rightarrow \infty $), diagonal path
limit framework (i.e., both $n$ and $T$ pass to infinity along some specific
diagonal in the two dimensional array), and joint limit framework (i.e., $%
n,T\rightarrow \infty $ simultaneously). In particular, they provided a
central limit theorem in joint limit framework for double-index processes
(Theorem 2 of \cite{phillipsmoon1999}). However, the Lindeberg condition of
that theorem is perhaps difficult to verify in practice. In Section \ref{sec
AppendixB}, we provide a variant (Theorem \ref{thmdoubleindexCLT}), which
relies on a Lyapounov's condition. Moreover, the variant allows the central
limit theorem to kick in from either the cross-sectional or time dimension.}
Since $n$ tends to infinity, there are two main cases when considering (\ref%
{model1}): (a) $\{n_{j}\}_{j=1}^{v}$ are all fixed while $v\rightarrow
\infty $; (b) $n_{j}\rightarrow \infty $ for at least some $j$ while $v$ is
fixed. Case (a) corresponds to practical situations where the data have a
large number of ways but in each way the number of entities is small; case
(b) often corresponds to, say, three-way or four-way data in which at least
one way has a large number of entities. The methodologies developed in \cite%
{hafnerlintontang2018} and this article are perfectly geared for case (a) in
the sense that (\ref{model1}) is \textit{correctly} specified for the data.

We do not analyse case (b) theoretically, but our estimation and inference
procedures can in principle be applied to case (b) also, but the theory will
require more work and stronger restrictions on the relationship between $n$
and $T.$ For example, if $v=2$ and $n_{1}=n_{2}=\sqrt{n},$ then the
sub-matrices $\Sigma _{1},\Sigma _{2}$ each contain order $n$ unknown
quantities. If $n/T\rightarrow 0$ fast enough, then we may show some
consistency of our estimators of the sub-matrices $\Sigma _{1},\Sigma _{2}.$
On the other hand, if this rate condition is not satisfied, one could
combine the separable structure (i.e., the Kronecker product) with sparsity
restrictions on the sub-matrices. This has been investigated in the
literature. Other approaches have been considered in \cite{akdemirgupta2011}%
, \cite{hoff2011}, and \cite{hoff2015}. Henceforth, when we say the
Kronecker product model (\ref{model1}), we implicitly mean case (a).

The Kronecker product model leads to substantial dimension reduction even
though it need not be sparse in the sense of (2.1) of \cite%
{fanliaoliu2016econometricsJ}. \cite{hafnerlintontang2018} showed that the
matrix logarithm of a Kronecker product covariance or correlation matrix is
a sparse matrix (with $O(\log n)$ unknown quantities) and the logarithmic
operator converts the multiplicative Kronecker product structure into an
additive one. Therefore, the logarithm of a Kronecker product covariance or
correlation matrix is a linear function of a much "smaller" vector of
unknown quantities. They used this to develop a closed-form estimator, they
established its consistency and provided a central limit theorem (CLT).
However, their results require strong, albeit sufficient but not necessary,
conditions; in particular they obtained Frobenius norm consistency of the
estimator under a condition that at least $n/T\rightarrow 0,$ which is very
restrictive. On the contrary, other methodologies typically achieve \textit{%
average} Frobenius norm consistency provided $s\log n/T\rightarrow 0,$ where 
$s$ is some sparsity index (e.g., see \cite{bickellevina2008} Theorem 2 with 
$q=0$).\footnote{\textit{Average} Frobenius norm means dividing a Frobenius
norm by $\sqrt{n}$, while \textit{relative} Frobenius norm means dividing a
Frobenius norm by the Frobenius norm of a target matrix, say, the unknown
covariance matrix. These two concepts are similar, but not exactly the same.}

In this article, we relax the rate restriction on $n$ imposed by \cite%
{hafnerlintontang2018} and allow $n$ to be possibly larger than $T$. We
propose a new covariance matrix estimator called the \textit{quadratic form}
estimator based on the Kronecker product model. Our estimator averages
elements of the sample covariance matrix, so we obtain a rate improvement by
averaging. In particular, under a cross-sectional weak dependence condition,
the quadratic form estimator achieves \textit{relative} Frobenius norm
consistency provided $\log^3n/T\to 0$. Moreover, this method automatically
produces a symmetric and positive definite covariance matrix estimator,
unlike some of the sparsifying methods considered by \cite%
{fanliaoliu2016econometricsJ}.

We apply our methodology to a concrete testing problem; we consider the null
hypothesis $H_{0}:\mu =\mu _{0}$, where $\mu $ is the mean of the large
dimensional data $y_{t}$ and $\mu _{0}$ is some known vector. One practical
example would be that $y_{t}$ corresponds to differences between treated and
controlled groups and we want to test whether the mean cross-sectional
differences are different from zero. We define the Lagrange multiplier (LM)
and Wald test statistics based on our estimated precision matrix and
establish their asymptotic distributions under both null and local
alternatives of the form $H_{1}:\mu =\mu _{0}+\theta /\sqrt{T}$ for some
vector $\theta $. We also provide two results regarding testing linear
restrictions of $\mu $.

We compare our estimation and testing methods with \cite{ledoitwolf2004}'s
linear shrinkage estimator and \cite{ledoitwolf2017}'s direct nonlinear
shrinkage estimator in Monte Carlo simulations. Our methods perform very
well in moderate-sized samples. In fact, they work well even in situations
where a Kronecker product model is \textit{misspecified} for a covariance
matrix. 
%This provides some evidence that our methodology could be applied to the case (b) mentioned above. 
%\footnote{\cite{hafnerlintontang2018} established a closest approximation property of the Kronecker product model, and some of the results in this article continue to hold even when the Kronecker product model is not true, provided that the target matrix is taken as the Kronecker product matrix which is closest to truth.}

%Peter Phillips has made immense contributions to multivariate analysis. His correction of the textbook presentation of the characteristic function of the $F$-statistic (\cite{phillips1982}) lead to a fundamental reappraisal of the properties of this everywhere used test. \cite{maasoumiphillips1982} likewise corrected some fundamental errors in instrumental variable theory. \cite{phillips1985} and \cite{phillips1986} presented, respectively, the exact distribution of the SUR estimator and the Wald statistic for testing linear restrictions in a multivariate linear model under Gaussianity. To obtain these results, he developed the theory of fractional matrix calculus, which in itself is a fundamental contribution. Unfortunately we are unable to rely on these results since he required $n<T.$ Peter Phillips later worked a lot on multivariate nonstationary time series, and transformed that literature too. There is now a developing literature that is extending that theory to the large dimensional case (\cite{onatskiwang2018}).

The rest of the article is structured as follows. In Section \ref{sec model}
we discuss the model and identification while in Section \ref{sec estimation}
we propose the quadratic form estimator. Section \ref{sec asymptotic
properties} gives the rate of convergence for the quadratic form estimator.
In Section \ref{sec test statistics} we define the LM and Wald test
statistics and establish their asymptotic distributions under both null and
local alternatives. We also consider testing linear restrictions of $\mu$.
Section \ref{sec simulation} conducts Monte Carlo simulations comparing our
approach with Ledoit and Wolf estimators. Section \ref{sec conclusion}
concludes. All the major proofs are put in the Appendix while auxiliary
lemmas and theorems are in Section \ref{sec AppendixB}.

\subsection{Notation}

Let $A$ be an $m\times n$ matrix. Let $\ve A$ denote the vector obtained by
stacking the columns of $A$ one underneath the other. The \textit{%
commutation matrix} $K_{m,n}$ is an $mn\times mn$ \textit{orthogonal} matrix
which translates $\ve A$ to $\ve(A^{\intercal})$, i.e., $\ve%
(A^{\intercal})=K_{m,n}\ve(A)$. If $A$ is a symmetric $n\times n$ matrix,
its $n(n-1)/2$ supradiagonal elements are redundant in the sense that they
can be deduced from symmetry. If we eliminate these redundant elements from $%
\ve A$, we obtain a new $n(n+1)/2\times1$ vector, denoted $\vech A$. They
are related by the full-column-rank, $n^{2}\times n(n+1)/2$ \textit{%
duplication matrix} $D_{n}$: $\ve A=D_{n}\vech A$. Conversely, $\vech %
A=D_{n}^{+}\ve A$, where $D_{n}^{+}$ is $n(n+1)/2\times n^{2}$ and the
Moore-Penrose generalized inverse of $D_{n}$. In particular, $%
D_{n}^{+}=(D_{n}^{\intercal}D_{n})^{-1}D_{n}^{\intercal}$ because $D_{n}$ is
full-column-rank.

For $x\in\mathbb{R}^{n}$, let $\Vert x\Vert_{2}:=\sqrt{%
\sum_{i=1}^{n}x_{i}^{2}}$ and $\|x\|_{\infty}:=\max_{1\leq i\leq n}|x_{i}|$
denote the Euclidean ($\ell_{2}$) norm and the element-wise maximum ($%
\ell_{\infty}$) norm, respectively. 
%Notation $\mathrm{diag}(x)$ gives an $%n\times n$ diagonal matrix with the diagonal being the elements of $x$ while $\mathrm{diag}(A)$ extracts the diagonal of a square matrix $A$ to form a column vector. 
Let $\lambda_{\max}(\cdot)$ and $\lambda_{\min}(\cdot) $ denote the maximum
and minimum eigenvalues of some real symmetric matrix, respectively. For any
real $m\times n$ matrix $A=(a_{i,j})_{1\leq i\leq m, 1\leq j\leq n}$, let $%
\Vert A\Vert_{F}:=[\text{tr}(A^{\intercal}A)]^{1/2}\equiv[\text{tr}%
(AA^{\intercal })]^{ 1/2}\equiv\Vert\ve A\Vert_{2}$, $\|A\|_{1}:=%
\sum_{i=1}^{m}\sum_{j=1}^{n}|a_{i,j}|$, $\Vert
A\Vert_{\ell_{2}}:=\max_{\Vert x\Vert_{2}=1}\Vert Ax\Vert_{2}\equiv\sqrt{%
\lambda_{\max}(A^{\intercal}A)}$, $\|A\|_{\ell_1}:=\max_{1\leq j\leq
n}\sum_{i=1}^{m}|a_{i,j}|$, and $\|A\|_{\ell_{\infty}}:=\max_{1\leq i\leq
m}\sum_{j=1}^{n}|a_{i,j}|$ denote the Frobenius ($\ell_{2}$) norm, $\ell_{1}$
norm, and spectral norm ($\ell_{2}$-operator norm), maximum column sum
matrix norm ($\ell_1$-operator norm), and maximum row sum matrix norm ($%
\ell_{\infty}$-operator norm) of $A$, respectively. Note that $%
\|\cdot\|_{\infty}$ can also be applied to matrix $A$, i.e., $%
\|A\|_{\infty}=\max_{1\leq i\leq m,1\leq j\leq n}|a_{i,j}|$; however $%
\|\cdot\|_{\infty}$ is not a matrix norm so it does not have the
submultiplicative property of a matrix norm.

Landau (order) notation in this article, unless otherwise stated, should be
interpreted in the sense that $n,T\rightarrow \infty $ simultaneously. An 
\textit{absolute} positive constant refers to a constant independent of
anything which is a function of $n$ and/or $T$. 
%Notation $\mathbf{0}$ denotes zero vectors of various dimensions according to the context.
We write $a\asymp b$ if there exist absolute constants $0< c_1\leq c_2$ such
that $c_1b\leq a\leq c_2b$. For real numbers $a,b$ let $a \vee b$ denote $%
\max(a,b)$.

\section{The Model and Identification}
\label{sec model}

We now directly work with the high-level $n$-dimensional random vector $%
y_{t} $ with $\mu :=\mathbb{E}y_{t}$ and $\Sigma :=\mathbb{E}[(y_{t}-\mu
)(y_{t}-\mu )^{\intercal }]$ for every $t$. In particular, $\Sigma $ takes
the form of (\ref{model1}). For each $j$, $\Sigma _{j}$ contains $%
n_{j}(n_{j}+1)/2-1$ (unrestricted) parameters. In total, model (\ref{model1}%
) contains $\sum_{j=1}^{v}n_{j}(n_{j}+1)/2-(v-1)$ unknown parameters. This
model is the same as considered in \cite{hafnerlintontang2018} except that
we make a different identifying restriction. The implied form for $\Sigma
^{-1}$ is also Kronecker, i.e., $\Sigma ^{-1}=\sigma ^{-2}\times \Sigma
_{1}^{-1}\otimes \cdots \otimes \Sigma _{v}^{-1}.$

We show that model (\ref{model1}) is indeed identified. First, the parameter 
$\sigma $ is identified because 
\begin{equation*}
\mathrm{tr}(\Sigma )=\sigma ^{2}\times \mathrm{tr}(\Sigma _{1}\otimes \cdots
\otimes \Sigma _{v})=\sigma ^{2}\times \mathrm{tr}(\Sigma _{1})\times \cdots
\times \mathrm{tr}(\Sigma _{v})=\sigma ^{2}n,
\end{equation*}%
whence we have $\sigma ^{2}=\mathrm{tr}(\Sigma )/n$. We next consider
identification of the remaining parameters based on the \textit{partial
trace operator} (\cite{filipiakkleinvojtkova2018}). Suppose that an $n\times
n$ matrix $A$ can be written in terms of $n_{1}\times n_1$ blocks of $%
n_{-1}\times n_{-1}$ dimensional matrices $A _{-1;i,j}$, where $%
n_{-1}:=n/n_{1}$; that is, 
\begin{equation}  \label{eqn A}
A =\left( 
\begin{array}{ccc}
A_{-1;1,1} & \cdots & A_{-1;1,n_{1}} \\ 
& \ddots & \vdots \\ 
&  & A_{-1;n_{1},n_{1}}%
\end{array}%
\right) .
\end{equation}%
Then the partial trace operator $\mathrm{PTR}_{n_{1}}:\mathbb{R}^{n\times n}%
\mathbb{\rightarrow R}^{n_{1}\times n_{1}}$ is defined as follows:%
\begin{equation*}
\mathrm{PTR}_{n_{1}}(A )=\left( 
\begin{array}{ccc}
\mathrm{tr}(A_{-1;1,1}) & \cdots & \mathrm{tr}(A_{-1;1,n_{1}}) \\ 
& \ddots & \vdots \\ 
&  & \mathrm{tr}(A_{-1;n_{1},n_{1}})%
\end{array}%
\right) .
\end{equation*}

Consider model (\ref{model1}), and let $\Sigma _{-1}:=\Sigma _{2}\otimes
\cdots \otimes \Sigma _{v}$. Define the $n_{1}\times n_{1}$ matrix $%
d^{(1)}:= \mathrm{PTR}_{n_{1}}(\Sigma )=\sigma ^{2}\mathrm{tr}(\Sigma
_{-1})\times \Sigma _{1}.$ Then $\Sigma _{1}=d^{(1)}/(\mathrm{tr}%
(d^{(1)})/n_{1}).$ According to Definition 1.1(ii) of \cite%
{filipiakkleinvojtkova2018}, $\mathrm{PTR}_{n_{1}}(\Sigma )=\sum_{\ell
=1}^{n_{-1}}(I_{n_{1}}\otimes e_{\ell ,n_{-1}}^{\intercal })\Sigma
(I_{n_{1}}\otimes e_{\ell ,n_{-1}}),$ where $e_{\ell ,n_{-1}}$ is the $%
n_{-1}\times 1$ elementary vector with one in position $\ell $ and zero
elsewhere. In this sense, $d^{(1)}$ is a quadratic form of $\Sigma $.

We next consider the remaining components $\Sigma _{h},$ $h=2,\ldots ,v$.
Write 
\begin{equation*}
\Sigma _{-h}:=\left\{ 
\begin{array}{ll}
\Sigma _{h+1}\otimes \cdots \otimes \Sigma _{v}\otimes \Sigma _{1}\otimes
\cdots \otimes \Sigma _{h-1} & \text{ for }h=2,\ldots ,v-1 \\ 
\Sigma _{1}\otimes \cdots \otimes \Sigma _{v-1} & \text{ for }h=v.%
\end{array}%
\right. .
\end{equation*}%
Note that $\Sigma _{-h}$ is $n_{-h}\times n_{-h}$ dimensional, where $%
n_{-h}:=n/n_{h}$. Recalling the identity $B\otimes A=K_{p,m}(A\otimes
B)K_{m,p}$ for $A$ ($m\times m$) and $B$ ($p\times p$) (\cite%
{magnusneudecker1986} Lemma 4), we write 
\begin{align}
\Sigma ^{(h)}& :=K_{n_{h}\times \cdots \times n_{v},n_{1}\times \cdots
\times n_{h-1}}\Sigma K_{n_{1}\times \cdots \times n_{h-1},n_{h}\times
\cdots \times n_{v}}  \notag \\
& =K_{n_{h}\times \cdots \times n_{v},n_{1}\times \cdots \times
n_{h-1}}(\sigma ^{2}\times \Sigma _{1}\otimes \cdots \otimes \Sigma
_{v})K_{n_{1}\times \cdots \times n_{h-1},n_{h}\times \cdots \times n_{v}} 
\notag \\
& =\sigma ^{2}\times \Sigma _{h}\otimes \Sigma _{h+1}\otimes \cdots \otimes
\Sigma _{v}\otimes \Sigma _{1}\otimes \cdots \otimes \Sigma _{h-1}=\sigma
^{2}\times \Sigma _{h}\otimes \Sigma _{-h}.  \label{align Sigma h}
\end{align}%
Define the $n_{h}\times n_{h}$ matrix $d^{(h)}:=\mathrm{PTR}_{n_{h}}(\Sigma
^{(h)})=\sigma ^{2}\mathrm{tr}(\Sigma _{-h})\times \Sigma _{h}$. Then 
\begin{equation*}
\Sigma _{h}=\frac{d^{(h)}}{\tr(d^{(h)})/n_{h}}.
\end{equation*}

\section{Estimation}

\label{sec estimation}

We observe an $n$-dimensional weakly stationary time series vector $%
\{y_{t}\}_{t=1}^{T}$ with mean $\mu $ and covariance matrix $\Sigma $.
Define the sample covariance matrix 
\begin{equation*}
M_{T}:=\frac{1}{T}\sum_{t=1}^{T}(y_{t}-\bar{y})(y_{t}-\bar{y})^{\intercal },
\end{equation*}%
where $\bar{y}:=\frac{1}{T}\sum_{t=1}^{T}y_{t}$. Define $\hat{d}^{(1)}:=%
\mathrm{PTR}_{n_{1}}(M_{T})$. Then let $\tilde{\Sigma}_{1}:=\hat{d}^{(1)}/(%
\mathrm{tr}(\hat{d}^{(1)})/n_{1}).$ Likewise, define the "permuted" sample
covariance matrix 
\begin{equation}
M_{T}^{(h)}:=K_{n_{h}\times \cdots \times n_{v},n_{1}\times \cdots \times
n_{h-1}}M_{T}K_{n_{1}\times \cdots \times n_{h-1},n_{h}\times \cdots \times
n_{v}},  \label{eqn random5}
\end{equation}%
for $h=2,\ldots ,v$. Define $\hat{d}^{(h)}:=\mathrm{PTR}_{n_{h}}(M_{T}^{(h)})
$ for $h=2,\ldots ,v$. Then 
\begin{equation}
\tilde{\Sigma}_{h}:=\frac{\hat{d}^{(h)}}{\mathrm{tr}(\hat{d}^{(h)})/n_{h}},
\label{eqn tilde Sigma}
\end{equation}%
for $h=1,\ldots ,v$.

The \textit{quadratic form} estimator $\tilde{\Sigma}$ for $\Sigma $ is 
\begin{equation*}
\tilde{\Sigma}=\hat{\sigma}^{2}\times \tilde{\Sigma}_{1}\otimes \cdots
\otimes \tilde{\Sigma}_{v},
\end{equation*}%
\begin{equation}
\hat{\sigma}^{2}:=\frac{\mathrm{tr}(M_{T})}{n}.  \label{eqn sigma2}
\end{equation}%
By Lemma 2.4 of \cite{filipiakkleinvojtkova2018}, if $M_{T}$ is symmetric
and positive semidefinite, then so are $\{\tilde{\Sigma}_{j}\}_{j=1}^{v}$
and hence $\tilde{\Sigma}.$ Moreover, simulations show that even for
positive semidefinite $M_{T}$, $\{\tilde{\Sigma}_{j}\}_{j=1}^{v},$ and hence 
$\tilde{\Sigma},$ are positive definite. As a result, the quadratic form
estimator $\tilde{\Sigma}^{-1}$ for $\Sigma ^{-1}$ is $\tilde{\Sigma}^{-1}=%
\hat{\sigma}^{-2}\times \tilde{\Sigma}_{1}^{-1}\otimes \cdots \otimes \tilde{%
\Sigma}_{v}^{-1}.$ We stress that $\tilde{\Sigma}^{-1}$ exists even if $n>T$%
. The quadratic form estimator is closely related to the quasi-maximum
likelihood estimation (QMLE), but has the particular advantage in large
dimensions in the sense that it is in closed form.\footnote{%
In the previous version of this article, we introduced a variant of the
quadratic form estimator, which was derived by replacing the partial trace
operator with a partial sum operator. Because of inferiority of that
variant, we no longer include it in the current version.}

In general we expect each element of $M_{T}$ to be $\sqrt{T}$-consistent,
but here we are averaging over a large number of such elements. Under a
cross-sectional weak dependence condition, like Assumption \ref{assu
summability}, we should have a rate improvement for the quadratic form
estimator. We formally establish this in Section \ref{sec asymptotic
properties}.

\section{The Rate of Convergence}

\label{sec asymptotic properties}

In this section, we shall derive the rate of convergence for the quadratic
form estimator. We make the following assumptions:

\begin{assumption}
\label{assu normality}

\item 
\begin{enumerate}[(i)]

\item The sample $\{y_{t}\}_{t=1}^{T}$ are independent over $t$.

\item 
\begin{equation*}
\max_{1\leq i\leq n}\frac{1}{T}\sum_{t=1}^{T}\mathbb{E}|y_{t,i}|^{m} \leq
A^m,\qquad m=2,3,\ldots,
\end{equation*}
for some absolute positive constant $A$.

\item Consider a \textit{normal} random vector $z_t$ which has the same mean
vector and covariance matrix as those of $y_t$. The $n^2\times n^2$ kurtosis
matrix of $y_t$ satisfies 
\begin{equation*}
\var\del[1]{(y_t-\mu)\otimes (y_t-\mu)} \leq C \var\del[1]{ (z_t-\mu)\otimes
(z_t-\mu)},
\end{equation*}
for some absolute positive constant $C$ for every $t$, where $\leq$ is to be
interpreted componentwise.
\end{enumerate}
\end{assumption}

Assumption \ref{assu normality}(i) facilitates our technical analysis, but
is perhaps not necessary. Assumption \ref{assu normality}(ii) assumes the
existence of an infinite number of moments of $y_{t}$, which allows one to
invoke a concentration inequality such as the Bernstein's inequality. Normal
random vectors or random vectors that exhibit some exponential-type tail
probability (e.g., subgaussianity, subexponentiality, semiexponentiality
etc) satisfy this condition. Assumption \ref{assu normality}(iii) supposes
that the kurtosis matrix of $y_{t}$ is of the same order of magnitude as if
it were a normal random vector. We impose this restriction on the kurtosis
matrix of $y_{t}$ because not much research has touched on unrestricted
kurtosis matrices in the large dimensional case.

\begin{assumption}
\label{assuBasic}

\item 
\begin{enumerate}[(i)]

%\item $\max_{1\leq j\leq v}n_{j}$ is an absolute positive constant.

\item $\{n_{j}\}_{j=1}^{v}$ are all fixed for any fixed $v$ and $%
v\rightarrow \infty $ (i.e., $n,T\to \infty$).

\item $\min_{1\leq j\leq v}\lambda_{\min}(\Sigma_{j})$ is bounded away from
zero by an absolute positive constant as $n,T\to \infty$.
\end{enumerate}
\end{assumption}

Assumption \ref{assuBasic}(i) says that the dimensions of the sub-matrices
are fixed when the number of sub-matrices tends to infinity. Note that
Assumption \ref{assuBasic}(ii) does not necessarily imply that $\lambda
_{\min }(\Sigma )$ is bounded away from zero by an absolute positive
constant. This is because $\lambda _{\min }(\Sigma )=\sigma ^{2}\times
\prod_{j=1}^{v}\lambda _{\min }(\Sigma _{j})$ and $v\rightarrow \infty $.

\begin{lemma}
\label{lemmaOmega_j} Suppose Assumption \ref{assuBasic}(i) hold. We have

\begin{enumerate}[(i)]

\item $v=O(\log n)$.

\item $\max_{1\leq j\leq v}\lambda_{\max}(\Sigma_{j})$ is bounded from above
by an absolute positive constant as $n,T\to \infty$.
\end{enumerate}
\end{lemma}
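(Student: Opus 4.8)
The plan is to handle the two parts separately; each follows directly from the structural constraints on the Kronecker factors, so the work lies mainly in reading off the correct inequality rather than in any substantive argument.

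For part (i), I would use the elementary lower bound $n_{j}\geq 2$, which holds for every $j$ by the standing assumption on the model. Since $n=\prod_{j=1}^{v}n_{j}$, this gives $n\geq 2^{v}$, and taking logarithms yields $v\leq \log n/\log 2$. Hence $v=O(\log n)$, with no further machinery required.

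For part (ii), the two facts I would invoke are that each $\Sigma_{j}$ is a covariance matrix, hence positive semidefinite, and that it is normalized by $\tr(\Sigma_{j})=n_{j}$. Because the eigenvalues of a positive semidefinite matrix are nonnegative and sum to its trace, I obtain $\lambda_{\max}(\Sigma_{j})\leq \tr(\Sigma_{j})=n_{j}$. Assumption \ref{assuBasic}(i) stipulates that the $\{n_{j}\}$ remain fixed as $v\to\infty$, which I read as the existence of an absolute constant $\bar{n}$ with $n_{j}\leq \bar{n}$ for all $j$ along the sequence. Combining these bounds gives $\max_{1\leq j\leq v}\lambda_{\max}(\Sigma_{j})\leq \max_{1\leq j\leq v}n_{j}\leq \bar{n}$, the required absolute bound.

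I do not anticipate a genuine obstacle here; the one point demanding care is the interpretation of Assumption \ref{assuBasic}(i). The phrase ``the $n_{j}$ are all fixed'' must be understood as a \emph{uniform} bound on the factor dimensions that persists along the entire sequence $v\to\infty$, rather than merely as each $n_{j}$ being constant within a single configuration. With that reading secured, both claims are one-line consequences of the trace normalization together with the lower bound $n_{j}\geq 2$, and neither a concentration argument nor the joint $(n,T)$ asymptotics play any role.
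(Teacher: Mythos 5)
Your proof is correct and follows essentially the same route as the paper: part (i) via $n=\prod_j n_j\geq 2^v$ (the paper writes $v\leq \log n/\log(\min_j n_j)$, which amounts to the same bound since $n_j\geq 2$), and part (ii) via $\lambda_{\max}(\Sigma_j)\leq \tr(\Sigma_j)=n_j$ together with the boundedness of the $n_j$ under Assumption \ref{assuBasic}(i). Your explicit remark on reading ``fixed $n_j$'' as a uniform bound is a fair clarification but not a departure from the paper's argument.
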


Note that Lemma \ref{lemmaOmega_j}(ii) does not necessarily imply that $%
\lambda_{\max}(\Sigma)$ is bounded from above by an absolute positive
constant. This is because $\lambda_{\max}(\Sigma)=\sigma^{2}\times\prod
_{j=1}^{v}\lambda_{\max}(\Sigma_{j})$ and $v\to\infty$.

\begin{assumption}
\label{assu summability} Let $0\leq \beta_1\leq 2$. 
\begin{equation*}
\lim_{n\to\infty}\frac{1}{n^{\beta_1}}\|\Sigma\|_F^2=\lim _{n\to\infty}\frac{%
\sigma^4}{n^{\beta_1}}\del[3]{\prod_{j=1}^{v}\|\Sigma_j\|_F^2}=\omega
<\infty.
\end{equation*}
\end{assumption}

Assumption \ref{assu summability} characterises the cross-sectional
dependence of $\{y_{t}\}_{t=1}^{T}$. According to Proposition 1 of \cite%
{chudikpesaran2013}, $\{y_{t}\}_{t=1}^{T}$ is said to be \textit{%
cross-sectionally weakly dependent}. The smaller $\beta _{1}$ is, the less
cross-sectional dependence of $\{y_{t}\}_{t=1}^{T}$ is allowed and the
stronger Assumption \ref{assu summability} is. When $\beta _{1}=2$,
Assumption \ref{assu summability} is slack as we are not restricting
cross-sectional dependence of $\{y_{t}\}_{t=1}^{T}$ at all ($\Vert \Sigma
\Vert _{F}^{2}=O(n^{2})$ in general). On the one hand, we would like to
assume $\beta _{1}$ as close to 2 as possible to make Assumption \ref{assu
summability} as weak as possible. On the other hand, the smaller $\beta _{1}$
is, the weaker cross-sectional dependence $\{y_{t}\}_{t=1}^{T}$ exhibits,
and the faster rate of convergence the quadratic form estimator will be able
to achieve. There is a trade off.

One important case is $\beta_1=1$. In this case one \textit{sufficient}
condition for Assumption \ref{assu summability} is that $\Sigma$ has bounded
maximum column sum matrix norm (i.e., $\|\Sigma\|_{\ell_1}=O(1)$) or bounded
maximum row sum matrix norm (i.e., $\|\Sigma\|_{\ell_{\infty}}=O(1)$). To
see this 
\begin{align*}
\frac{1}{n}\|\Sigma\|_F^2&\leq \frac{1}{n}n\|\Sigma\|_{\ell_1}^2=\frac{1}{n}%
n\|\Sigma\|_{\ell_{\infty}}^2=O(1).
\end{align*}
Note that for symmetric $\Sigma$, bounded maximum column sum matrix norm or
bounded maximum row sum matrix norm implies that the maximum eigenvalue of $%
\Sigma$ is bounded from above by an absolute positive constant and the
minimum eigenvalue of $\Sigma^{-1}$ is bounded away from zero by an absolute
positive constant: $1/(\lambda_{\min}(\Sigma^{-1}))=\lambda_{\max}(\Sigma)=%
\|\Sigma\|_{\ell_2}\leq \|\Sigma\|_{\ell_1}=\|\Sigma\|_{\ell_{\infty}}=O(1)$%
. The assumption of bounded maximum column/row sum matrix norm has been used
by \cite{fanliaoyao2015} (their Assumption 4.1(i)) and \cite%
{pesaranyamagata2012} (their Assumption 3). %Then we have

\begin{thm}
\label{thm quadratic form final} Suppose Assumptions \ref{assu normality}, %
\ref{assuBasic} and \ref{assu summability} hold. If $\log^3n/T\to 0$ as $%
n,T\to \infty$, then we have

\begin{enumerate}[(i)]

\item 
\begin{equation*}
\frac{\enVert[1]{\tilde{\Sigma}-\Sigma}_{F}}{\|\Sigma\|_{F}}=O_{p}%
\del[3]{\sqrt{\frac{\log^3 n}{n^{2-\beta_1}T}}}+O_p\del[3]{\frac{\log^2
n}{T}}.
\end{equation*}

\item 
\begin{equation*}
\frac{\enVert[1]{\tilde{\Sigma}^{-1}-\Sigma^{-1}}_{F}}{\|\Sigma^{-1}\|_{F}}%
=O_{p}\del[3]{\sqrt{\frac{\log^3 n}{n^{2-\beta_1}T}}}+O_p\del[3]{\frac{%
\log^2 n}{T}}.
\end{equation*}

\item 
\begin{equation*}
\frac{\enVert[1]{\tilde{\Sigma}-\Sigma}_{1}}{\|\Sigma\|_{1}}=O_{p}%
\del[3]{\sqrt{\frac{\log^3 n}{n^{2-\beta_1}T}}}+O_p\del[3]{\frac{\log^2
n}{T}}.
\end{equation*}

\item 
\begin{equation*}
\frac{\enVert[1]{\tilde{\Sigma}^{-1}-\Sigma^{-1}}_{1}}{\|\Sigma^{-1}\|_{1}}%
=O_{p}\del[3]{\sqrt{\frac{\log^3 n}{n^{2-\beta_1}T}}}+O_p\del[3]{\frac{%
\log^2 n}{T}}.
\end{equation*}

\item 
\begin{equation*}
\frac{\enVert[1]{\tilde{\Sigma}-\Sigma}_{\ell_2}}{\|\Sigma\|_{\ell_2}}=O_{p}%
\del[3]{\sqrt{\frac{\log^3 n}{n^{2-\beta_1}T}}}+O_p\del[3]{\frac{\log^2
n}{T}}.
\end{equation*}

\item 
\begin{equation*}
\frac{\enVert[1]{\tilde{\Sigma}^{-1}-\Sigma^{-1}}_{\ell_2}}{%
\|\Sigma^{-1}\|_{\ell_2}}=O_{p}\del[3]{\sqrt{\frac{\log^3
n}{n^{2-\beta_1}T}}}+O_p\del[3]{\frac{\log^2 n}{T}}.
\end{equation*}
\end{enumerate}
\end{thm}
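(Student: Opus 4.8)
The plan is to establish part (i), the relative Frobenius bound, in full, and then obtain the remaining five statements by essentially the same argument together with standard facts about Kronecker products. The starting point is the telescoping decomposition
\[
\tilde\Sigma - \Sigma = (\hat\sigma^2 - \sigma^2)\,\tilde\Sigma_1\otimes\cdots\otimes\tilde\Sigma_v + \sigma^2\sum_{j=1}^{v}\Sigma_1\otimes\cdots\otimes\Sigma_{j-1}\otimes(\tilde\Sigma_j - \Sigma_j)\otimes\tilde\Sigma_{j+1}\otimes\cdots\otimes\tilde\Sigma_v,
\]
which reduces the problem to controlling the scalar error $\hat\sigma^2 - \sigma^2$ and the factorwise errors $\tilde\Sigma_j - \Sigma_j$. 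The key structural fact I would exploit is that the Frobenius, entrywise-$\ell_1$, and spectral norms are all multiplicative under Kronecker products, so that taking the Frobenius norm of each telescoping term factorizes as $\prod_{k<j}\|\Sigma_k\|_F\cdot\|\tilde\Sigma_j-\Sigma_j\|_F\cdot\prod_{k>j}\|\tilde\Sigma_k\|_F$. Dividing by $\|\Sigma\|_F = \sigma^2\prod_k\|\Sigma_k\|_F$ turns the bound into a sum over $j$ of relative factorwise errors $\|\tilde\Sigma_j-\Sigma_j\|_F/\|\Sigma_j\|_F$ multiplied by products of the near-unit ratios $\|\tilde\Sigma_k\|_F/\|\Sigma_k\|_F$. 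Because each $\Sigma_j$ is of fixed dimension $n_j$ (Assumption \ref{assuBasic}(i)), all matrix norms on the factors are equivalent up to absolute constants, which is precisely why the same rate will reappear verbatim in (iii) and (v).

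The core estimate is therefore a uniform-over-$j$ bound on $\|\tilde\Sigma_j - \Sigma_j\|$. Since $\tilde\Sigma_j = \hat d^{(j)}/(\tr(\hat d^{(j)})/n_j)$ and $\Sigma_j = d^{(j)}/(\tr(d^{(j)})/n_j)$ with $\tr(d^{(j)})/n_j = \sigma^2 n_{-j}$, and because $n_j$ is fixed, it suffices to control the finitely many entries of $\hat d^{(j)} - d^{(j)}$ relative to their scale $n_{-j}\asymp n$. Each entry of $\hat d^{(j)}$ is a partial-trace sum of $n_{-j}$ entries of the permuted sample covariance matrix, and I would split the corresponding deviation into a main stochastic part $\frac1T\sum_t\sum_\ell[(y_{t,p_\ell}-\mu)(y_{t,q_\ell}-\mu)-\Sigma_{p_\ell q_\ell}]$ and a mean-correction part $\sum_\ell(\bar y_{p_\ell}-\mu)(\bar y_{q_\ell}-\mu)$ arising from centering at $\bar y$ rather than $\mu$. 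For the main part I would apply a Bernstein-type inequality (legitimized by the moment bound in Assumption \ref{assu normality}(ii) and independence over $t$ in \ref{assu normality}(i)), with the variance of the partial-trace sum bounded via the Gaussian fourth-moment identity, itself controlled by the kurtosis condition \ref{assu normality}(iii); this variance is of order $\|\Sigma\|_F^2 = O(n^{\beta_1})$ by Assumption \ref{assu summability}, so the main entry deviation is $O_p(\sqrt{n^{\beta_1}\log n/T})$ and, after dividing by $n$, contributes the relative factor error $O_p(\sqrt{\log n/(n^{2-\beta_1}T)})$. The mean-correction part, controlled using $\max_i|\bar y_i - \mu_i| = O_p(\sqrt{\log n/T})$, is higher order and yields the relative factor error $O_p(\log n/T)$.

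Assembling the telescoping sum then produces the stated rate. The sum runs over $v = O(\log n)$ factors (Lemma \ref{lemmaOmega_j}(i)), so the main term becomes $\log n\cdot\sqrt{\log n/(n^{2-\beta_1}T)} = \sqrt{\log^3 n/(n^{2-\beta_1}T)}$ and the mean-correction term becomes $\log n\cdot(\log n/T) = \log^2 n/T$. Along the way I must check that the products $\prod_{k>j}\|\tilde\Sigma_k\|_F/\|\Sigma_k\|_F = \prod_{k>j}(1+o_p(1))$ stay bounded, which holds because there are at most $v$ factors each within $o_p(1)$ of unity and $v$ times that deviation tends to zero under $\log^3 n/T\to 0$; the scalar error $\hat\sigma^2-\sigma^2 = \frac1n\sum_i[(M_T)_{ii}-\Sigma_{ii}]$ is handled by the same two-part decomposition and is dominated by the above. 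Finally, for the precision-matrix statements (ii), (iv), (vi) I would use that $\Sigma^{-1}=\sigma^{-2}\Sigma_1^{-1}\otimes\cdots\otimes\Sigma_v^{-1}$ inherits the Kronecker form, so the identical telescoping applies once the factorwise errors $\|\tilde\Sigma_j^{-1}-\Sigma_j^{-1}\|$ are controlled. Because $\lambda_{\min}(\Sigma_j)$ is bounded away from zero (Assumption \ref{assuBasic}(ii)) and $\lambda_{\max}(\Sigma_j)$ is bounded above (Lemma \ref{lemmaOmega_j}(ii)), matrix inversion is Lipschitz in a neighborhood of each $\Sigma_j$, so $\|\tilde\Sigma_j^{-1}-\Sigma_j^{-1}\|\lesssim\|\tilde\Sigma_j-\Sigma_j\|$ with an absolute constant once $\tilde\Sigma_j$ is close enough (an event of probability approaching one), and $\hat\sigma^{-2}-\sigma^{-2}$ is controlled via $\sigma^2$ being bounded away from zero. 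I expect the main obstacle to be the second step: computing the variance of the partial-trace sums sharply enough to extract the $n^{-(2-\beta_1)}$ averaging gain, and then making the control uniform over all $v=O(\log n)$ factors with exactly the right powers of $\log n$, since a crude union bound would cost extra logarithmic factors and the separate log contributions from concentration and from the length-$v$ telescoping must be combined correctly.
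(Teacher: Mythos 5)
Your overall architecture is essentially the paper's: the same two-part split of each entry of $\hat d^{(h)}$ into a piece centered at the true $\mu$ plus a mean-correction term, the same telescoping of the Kronecker product, multiplicativity of the Frobenius/$\ell_1$/spectral norms, the $(1+o_p(1))^{v}=O_p(1)$ control of the trailing products under $\log^3 n/T\to 0$, and factorwise Lipschitz inversion for parts (ii), (iv), (vi). (The paper attributes $\hat\sigma^{2}$ to the telescoping sum and $\hat\sigma^{2}-\sigma^{2}$ to the pure-$\Sigma$ product rather than your arrangement, but that is immaterial.) The genuine gap is in your core concentration step. You propose to control the main stochastic part of each entry, $\frac{1}{T}\sum_{t=1}^{T}Z_t$ with $Z_t=\sum_{\ell=1}^{n_{-h}}\bigl[(y_{t,p_\ell}-\mu_{p_\ell})(y_{t,q_\ell}-\mu_{q_\ell})-\Sigma_{p_\ell,q_\ell}\bigr]$, by a Bernstein-type inequality with variance proxy $\operatorname{var}(Z_t)=O(n^{\beta_1})$. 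But Bernstein requires the whole hierarchy of moment bounds $\frac{1}{T}\sum_{t}\mathbb{E}|Z_t|^{m}\le \frac{m!}{2}A^{m-2}\sigma_0^{2}$ for all $m\ge 2$, and the assumptions only deliver (a) all moments of \emph{individual} entries $y_{t,i}$ (Assumption \ref{assu normality}(ii)) and (b) \emph{second-moment} control of cross-products, via the kurtosis comparison (Assumption \ref{assu normality}(iii)) combined with Assumption \ref{assu summability}. For the sum $Z_t$ of $n_{-h}\asymp n$ cross-products the generic higher-moment bound is $\mathbb{E}|Z_t|^{m}=O(n^{m})$; pairing $\sigma_0^{2}\asymp n^{\beta_1}$ with the $m=3$ condition then forces $A\gtrsim n^{3-\beta_1}$, and Bernstein's linear term, of order $A/T$, swamps the target deviation $\sqrt{n^{\beta_1}\log n/T}$ unless roughly $n^{3-\beta_1}\lesssim\sqrt{n^{\beta_1}T\log n}$ (for $\beta_1=1$, essentially $n^{3}\lesssim T\log n$) --- a low-dimensional restriction that defeats the entire point of allowing $n/T\to\infty$. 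Under these assumptions the averaging gain $n^{-(2-\beta_1)/2}$ is a purely second-moment phenomenon, and no exponential inequality can extract it.

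The paper's proof sidesteps this by using Chebyshev's inequality entrywise together with a union bound over all entries of all blocks: since each $n_h$ is fixed and $v=O(\log n)$ (Lemma \ref{lemmaOmega_j}(i)), there are only $O(\log n)$ entries to control, and it is precisely this union bound that generates the $\sqrt{\log n}$ factor in the rate of Theorem \ref{thm quadratic form}(i). Your worry that ``a crude union bound would cost extra logarithmic factors'' has it backwards: Chebyshev plus a union bound over $O(\log n)$ events, at threshold $M\sqrt{\log n/(n^{2-\beta_1}T)}$, yields failure probability $O(1/M^{2})$ with no extra logs. Bernstein is used in the paper only where it is legitimate, namely for the mean-correction term $\max_{1\le i\le n}|\bar y_i-\mu_i|=O_p(\sqrt{\log n/T})$ (Lemma \ref{lemma rate for mu hat}), whose summands are single entries directly covered by Assumption \ref{assu normality}(ii). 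If you replace your Bernstein step by this Chebyshev/union-bound argument, the rest of your outline goes through and coincides with the paper's proof.
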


\bigskip

The reason that we divide the Frobenius norm of the estimation error, say, $%
\Vert \tilde{\Sigma}-\Sigma \Vert _{F}$, by the Frobenius norm of the
target, i.e., $\Vert \Sigma \Vert _{F}$, is to define a proper notion of
"consistency". This is necessary because the cross-sectional dimension $n$
is growing to infinity. In this case, even if every element of a
matrix-valued estimator is converging in probability to the corresponding
element of its target matrix, there is no guarantee that its overall
estimation error will converge to zero in probability when $n,T\rightarrow
\infty $. The rescaling of the Frobenius norm of the estimation error is
standard in the large dimensional case, but in the literature scholars tend
to divide the Frobenius norm of the estimation error by $\sqrt{n}$ (e.g.,
see \cite{bickellevina2008} Theorem 2, \cite{fanliaomincheva2011} p3330, 
\cite{ledoitwolf2004} Definition 1 etc). The same reasoning applies to the $%
\ell _{1}$ and the spectral norm of the estimation error.

Note that there are two terms on the right side. The term $O_{p}%
\del[1]{\log^2n/T}$ exists because we need to estimate the unknown $\mu $.
If we knew $\mu $, this term would not be present.\footnote{%
If we knew $\mu $, the estimation procedure in Section \ref{sec estimation}
applies to $M_{T}^{0}:=T^{-1}\sum_{t=1}^{T}(y_{t}-\mu )(y_{t}-\mu
)^{\intercal }$ instead of $M_{T}$.} 
%It is not necessarily a smaller order than the rate of convergence, $(\log^3 n/(n^{2-\beta_1}T))^{1/2}$. 
The rate of convergence, $\del[1]{\log^3 n/(n^{2-\beta_1}T)}^{1/2}$,
contains an additional, non-standard item $\sqrt{n^{2-\beta _{1}}}$ in the
denominator. This non-standard item exists because of the cross-sectional
weak dependence condition (Assumption \ref{assu summability}). If $\beta
_{1}=2$ (i.e., we are not restricting cross-sectional dependence of $%
\{y_{t}\}_{t=1}^{T}$ at all), this term vanishes. The rate of convergence of
the quadratic form estimator then becomes $(\log ^{3}n/T)^{1/2}$, which is
comparable to the convergence rates of other existent estimators in the
large dimensional case.

Take part (i) of the theorem as an illustration. If $\beta_1=2$ and we knew $%
\mu$, we have $\enVert[1]{\tilde{\Sigma}-\Sigma}_{F}=O_{p}%
\del[1]{\|\Sigma\|_{F}(\log^3n/T)^{1/2}}$. A typical threshold estimator $%
\hat{\Sigma}_{\text{thres}}$ has $\|\hat{\Sigma}_{\text{thres}%
}-\Sigma^{\dagger}\|_{F}=O_{p}\del[1]{(sn\log
n/T)^{1/2}}$, where $\Sigma^{\dagger}$ is some sparse truth and $s$ is its
sparsity index (see \cite{bickellevina2008} Theorem 2 with $q=0$). According
to \cite{bickellevina2008}, $s$ is the upper bound of non-zero elements for
every row, so $\|\Sigma^{\dagger}\|_F=O(\sqrt{sn})$ under the sparsity
model. If one assumes $\|\Sigma^{\dagger}\|_F\asymp \sqrt{sn}$, one can
write $\|\hat{\Sigma}_{\text{thres}}-\Sigma^{\dagger}\|_{F}=O_{p}%
\del[1]{\|\Sigma^{\dagger}\|_F(\log n/T)^{1/2}}$. Then the two rates of
convergence only differ by a logarithmic factor.

Because of the cross-sectional weak dependence condition (Assumption \ref%
{assu summability}), the quadratic form estimator is able to achieve a
faster rate of convergence than a typical estimator does.

\section{Test Statistics}

\label{sec test statistics}

We apply our methodology to the testing issue. We consider the problem of
testing the null hypothesis $H_{0}:\mu =\mu _{0}$ against the alternative $%
H_{1}:\mu \neq \mu _{0}$.

The classical Wald test statistic (based on the sample covariance matrix $%
M_{T}$) is not defined when $n\geq T$; there is a large literature that
proposes alternative test statistics. \cite{baisaranadasa1996} proposed a
statistic based on $\Vert \bar{y}\Vert _{2}^{2}$, thereby avoiding the
inversion of the large sample covariance matrix, and established its
asymptotic normality. \cite{pesaranyamagata2012} extended this approach to
the Capital Asset Pricing Model (CAPM) regression setting and proposed
several test statistics. One of the test statistics is based on $\Vert
t\Vert _{2}^{2},$ where $t$ is a vector of individual $t$-statistics; \cite%
{pesaranyamagata2012} derived the limiting normal distribution of the
centred and scaled version of this under cross-sectional weak dependence
conditions. \cite{fanliaoyao2015} considered a Wald test statistic for
testing the CAPM restrictions inside a linear regression in the large
dimensional case. They regularized the estimated error covariance matrix by
imposing a sparsity assumption, and used that to form a quadratic form. They
established the null limiting distribution of their test statistic (they
also proposed a novel power enhancement procedure, which we do not study
here).

We now define the Lagrange multiplier (LM) test statistic 
\begin{equation}
LM_{n,T}=T(\bar{y}-\mu _{0})^{\intercal }\tilde{\Sigma}_{\mu _{0}}^{-1}(\bar{%
y}-\mu _{0}),  \label{lmnt}
\end{equation}%
where $\tilde{\Sigma}_{\mu _{0}}$ is the quadratic form estimator assuming
that we know $\mu =\mu _{0}$. The Wald test statistic is 
\begin{equation}
W_{n,T}=T(\bar{y}-\mu _{0})^{\intercal }\tilde{\Sigma}^{-1}(\bar{y}-\mu
_{0}),  \label{wnt}
\end{equation}%
which is the Hotelling $T^{2}$-statistic based on the quadratic form
estimator. 
%We can define all the classical Gaussian likelihood-based statistics under our model structure. The likelihood ratio test statistic is 
%\begin{equation*}
%\lambda_{n,T}=T\left( \log\det\tilde{\Sigma}_{\mu_0}-\log\det\tilde{\Sigma}\right) ,
%\end{equation*}
We next present the large sample properties of the binity $LM_{n,T}$ and $%
W_{n,T}$. We make one more cross-sectional dependence assumption.

\begin{assumption}
\label{assu summability 2} Let $0\leq \beta_2\leq 2$. 
\begin{equation*}
\lim_{n\to\infty}\frac{1}{n^{\beta_2}} \|\Sigma^{-1}\|_{1}= \lim_{n\to\infty
}\frac{1}{n^{\beta_2}\sigma^{2}}\del[3]{\prod_{j=1}^{v}\|\Sigma_j^{-1}\|_1}%
=\omega^{\prime}<\infty.
\end{equation*}
\end{assumption}

The bigger $\beta _{2}$ is, the weaker Assumption \ref{assu summability 2}
is. This is because it is putting less restriction on the cross-sectional
dependence of $\Sigma ^{-1}$. When $\beta _{2}=2$, Assumption \ref{assu
summability 2} is slack, as in essence we are not restricting anything. On
the one hand, we wish to assume $\beta _{2}$ as close to 2 as possible to
make Assumption \ref{assu summability 2} as weak as possible. On the other
hand, we wish to assume that $\beta _{2}$ is as small as possible so that
our methodology could accommodate an $n$ as large as possible.

One important case is $\beta_2=1$. In this case, a \textit{sufficient}
condition for Assumption \ref{assu summability 2} is that $\Sigma^{-1}$ has
bounded maximum column sum matrix norm (i.e., $\|\Sigma^{-1}\|_{\ell_1}=O(1)$%
) or bounded maximum row sum matrix norm (i.e., $\|\Sigma^{-1}\|_{\ell_{%
\infty}}=O(1)$). To see this 
\begin{align*}
\frac{1}{n} \|\Sigma^{-1}\|_{1}&=\frac{1}{n} \sum_{i=1}^{n}\sum_{j=1}^{n}%
\envert[1]{(\Sigma^{-1})_{i,j}}\leq \max_{1\leq i\leq n}\sum_{j=1}^{n}%
\envert[1]{(\Sigma^{-1})_{i,j}}=
\|\Sigma^{-1}\|_{\ell_{\infty}}=\|\Sigma^{-1}\|_{\ell_{1}}=O(1).
\end{align*}
Note that for symmetric $\Sigma^{-1}$, bounded maximum column sum matrix
norm or bounded maximum row sum matrix norm implies that the maximum
eigenvalue of $\Sigma^{-1}$ is bounded from above by an absolute positive
constant and the minimum eigenvalue of $\Sigma$ is bounded away from zero by
an absolute positive constant: $1/(\lambda_{\min}(\Sigma))=\lambda_{\max}(%
\Sigma^{-1})=\|\Sigma^{-1}\|_{\ell_2}\leq
\|\Sigma^{-1}\|_{\ell_1}=\|\Sigma^{-1}\|_{\ell_{\infty}}=O(1)$. The
assumption of bounded maximum column/row sum matrix norm has been used by 
\cite{fanliaoyao2015} (their Assumption 4.1(i)) and \cite%
{pesaranyamagata2012} (their Assumption 3).

\bigskip

\begin{thm}
\label{thm Wald statistics} Suppose Assumptions \ref{assu normality}, \ref%
{assuBasic}, \ref{assu summability}, and \ref{assu summability 2} hold. We
make the following assumptions:

\begin{enumerate}[(a)]

\item 
\begin{equation*}
\frac{n^{2\beta_2+\beta_1-3}\log^{5} n}{T}=o(1).
\end{equation*}

\item Consider the Cholesky decomposition of $\Sigma$, i.e., $%
\Sigma=LL^{\intercal}$, where $L$ is a nonsingular lower triangular matrix L
with positive diagonal elements. Assume that $x_t:=L^{-1}(y_t-\mu)$ is
cross-sectionally independent for any $t$, and for some $\delta >0$ 
\begin{equation*}
\limsup_{n,T\to \infty}\max_{1\leq i\leq n}\max_{1\leq t\leq T}\mathbb{E}%
\envert[1]{x_{t,i}}^{4+2\delta}<\infty.
\end{equation*}
\end{enumerate}

Then under $H_{0}:\mu =\mu _{0}$, as $n,T\rightarrow \infty $, 
\begin{equation*}
\frac{LM_{n,T}-n}{\sqrt{2n}}\xrightarrow{d}N(0,1).
\end{equation*}%
If one additionally assumes 
\begin{equation}
\frac{n^{\beta _{2}-\frac{1}{2}}\cdot \log ^{3}n}{T}=o(1),
\label{eqn wald test additional assumption}
\end{equation}%
then under $H_{0}:\mu =\mu _{0}$, as $n,T\rightarrow \infty $, 
\begin{equation}
\frac{W_{n,T}-n}{\sqrt{2n}}\xrightarrow{d}N(0,1).
\label{eqn wald weak convergence to standard normal}
\end{equation}
\end{thm}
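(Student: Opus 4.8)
The plan is to compare each feasible statistic with an \emph{infeasible} quadratic form built from the true precision matrix, and to treat the two resulting pieces separately. Under $H_{0}$ write $\bar z:=\sqrt T(\bar y-\mu_{0})=\sqrt T(\bar y-\mu)$, so that $LM_{n,T}=\bar z^{\intercal}\tilde\Sigma_{\mu_{0}}^{-1}\bar z$ and $W_{n,T}=\bar z^{\intercal}\tilde\Sigma^{-1}\bar z$, and let $LM_{n,T}^{0}:=\bar z^{\intercal}\Sigma^{-1}\bar z$ be the infeasible statistic. Using the Cholesky factor $\Sigma=LL^{\intercal}$ from assumption (b), set $w:=L^{-1}\bar z=T^{-1/2}\sum_{t=1}^{T}x_{t}$ with $x_{t}=L^{-1}(y_{t}-\mu)$. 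Then $\mathbb{E}[x_{t}x_{t}^{\intercal}]=I_{n}$; by Assumption \ref{assu normality}(i) the $x_{t}$ are independent over $t$, and by assumption (b) they are cross-sectionally independent, so the coordinates $w_{1},\dots,w_{n}$ are mutually independent and $LM_{n,T}^{0}=\sum_{i=1}^{n}w_{i}^{2}$.

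Next I would establish the infeasible CLT. Each $w_{i}=T^{-1/2}\sum_{t}x_{t,i}$ satisfies $\mathbb{E}w_{i}^{2}=1$, and a direct fourth-moment computation exploiting independence over $t$ gives $\mathbb{E}w_{i}^{4}=3+O(1/T)$ uniformly in $i$ (the $O(1/T)$ term collecting $T^{-1}\sum_{t}\mathbb{E}x_{t,i}^{4}$, which is bounded by assumption (b)), whence $\var(w_{i}^{2})=2+o(1)$ uniformly and $\sum_{i}\var(w_{i}^{2})\sim 2n$. The summands $w_{i}^{2}-1$ are independent across $i$ and, by the uniform $(4+2\delta)$ moment bound in assumption (b), satisfy a Lyapounov condition, so the double-index CLT (Theorem \ref{thmdoubleindexCLT}) yields $(LM_{n,T}^{0}-n)/\sqrt{2n}\xrightarrow{d}N(0,1)$.

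I would then control the replacement error for LM. The crucial step is the norm pairing $|\bar z^{\intercal}A\bar z|\le\|\bar z\|_{\infty}^{2}\|A\|_{1}$ applied to $A=\tilde\Sigma_{\mu_{0}}^{-1}-\Sigma^{-1}$, rather than the cruder spectral bound $\|A\|_{\ell_{2}}\|\bar z\|_{2}^{2}$. A Bernstein-type maximal inequality under Assumption \ref{assu normality}(ii) gives $\|\bar z\|_{\infty}^{2}=T\|\bar y-\mu\|_{\infty}^{2}=O_{p}(\log n)$, while Theorem \ref{thm quadratic form final}(iv) in its mean-known form (so that the $\log^{2}n/T$ term is absent for $\tilde\Sigma_{\mu_{0}}$) combined with Assumption \ref{assu summability 2} ($\|\Sigma^{-1}\|_{1}\asymp n^{\beta_{2}}$) gives $\|\tilde\Sigma_{\mu_{0}}^{-1}-\Sigma^{-1}\|_{1}=O_{p}(n^{\beta_{2}}(\log^{3}n/(n^{2-\beta_{1}}T))^{1/2})$. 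Multiplying these bounds and dividing by $\sqrt{2n}$, the squared order of $(LM_{n,T}-LM_{n,T}^{0})/\sqrt{2n}$ is $n^{2\beta_{2}+\beta_{1}-3}\log^{5}n/T$, which is $o(1)$ by assumption (a); Slutsky's theorem then delivers the LM result.

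For the Wald statistic I would repeat the pairing with $A=\tilde\Sigma^{-1}-\Sigma^{-1}$, now using Theorem \ref{thm quadratic form final}(iv) in full, including the $O_{p}(\log^{2}n/T)$ term induced by estimating $\mu$ through $\bar y$. This splits the error into two contributions: the first reproduces exactly the order governed by assumption (a), while the second contributes order $\|\bar z\|_{\infty}^{2}\|\Sigma^{-1}\|_{1}(\log^{2}n/T)/\sqrt n=n^{\beta_{2}-1/2}\log^{3}n/T$, which is $o(1)$ precisely under the additional assumption (\ref{eqn wald test additional assumption}); this explains why the Wald statistic requires a rate condition that the LM statistic does not. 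I expect the main obstacle to be the infeasible CLT: although the $w_{i}^{2}-1$ are independent across $i$, each is itself a normalized sum over $t$, so establishing $\var(w_{i}^{2})\to2$ uniformly and verifying the Lyapounov condition for the resulting double-index array as $n,T\to\infty$ simultaneously is the delicate part, and is exactly what Theorem \ref{thmdoubleindexCLT} is designed to handle. The other conceptual crux is recognizing that the pairing $\|\bar z\|_{\infty}^{2}\|\cdot\|_{1}$, which exploits the cross-sectional control on $\|\Sigma^{-1}\|_{1}$ and the $O_{p}(\log n)$ concentration of $\|\bar z\|_{\infty}^{2}$, is what makes the stated rate conditions exactly sufficient.
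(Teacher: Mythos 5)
Your proposal is correct and follows essentially the same route as the paper's proof: the same decomposition into an infeasible quadratic form $\bar z^{\intercal}\Sigma^{-1}\bar z$ plus a replacement error, the same pairing $|\bar z^{\intercal}A\bar z|\leq\|\bar z\|_{\infty}^{2}\|A\|_{1}$ controlled via Lemma \ref{lemma rate for mu hat}, Theorem \ref{thm quadratic form final}(iv) (mean-known version for LM, full version for Wald) and Assumption \ref{assu summability 2}, and the same kurtosis-based variance computation feeding a Lyapounov CLT (which the paper packages as Theorem \ref{thm Keleijian}, itself proved from Theorem \ref{thmdoubleindexCLT}(b)). The one step to attribute correctly: the uniform $(4+2\delta)$-th moment bound on the normalized sums $w_{i}=T^{-1/2}\sum_{t}x_{t,i}$ needed for Lyapounov does not follow directly from assumption (b) nor from Theorem \ref{thmdoubleindexCLT}, but from the Brillinger-type moment inequality (Theorem \ref{thmBrillinger}), which is exactly how the paper verifies condition (ii) of Theorem \ref{thm Keleijian}.
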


\bigskip

For the LM test, if we want to allow the interesting case of $n/T\to \infty$%
, then assumption (a) necessarily implies that $2\beta_2+\beta_1<4$, which
restricts both $\beta_2$ and $\beta_1$. In the special case of $%
\beta_1=\beta_2=1$, assumption (a) is reduced to $\log^5n/T=o(1)$, which is
a weak condition.

Assumption (b) is standard in the literature. \cite{fanliaoyao2015}
maintained normality (their Assumption 4.1(i)), which is a special case of
assumption (b). \cite{pesaranyamagata2012} also maintained assumption (b)
(their Assumption 2a). Assumption (b) implicitly assumes that $\lambda
_{\min }(\Sigma )$ is bounded away from zero by an absolute positive
constant, which strengthens Assumption \ref{assuBasic}(ii). Also note that $%
\var(x_{t})=I_{n}$, so strengthening from cross-sectional uncorrelatedness
to cross-sectional independence in assumption (b) is rather innocuous. In
addition, we assume that the $(4+2\delta )$th moment of $x_{t,i}$ is
(uniformly in $i$ and $t$) finite for $n,T$ sufficiently large, which is
also a weak assumption. Under the more restricted sequential limit ($%
T\rightarrow \infty $ and then $n\rightarrow \infty ),$ $\sqrt{T}(\bar{y}%
-\mu _{0})$ is approximately normal so the limiting properties could be
calculated for the non-normal case as if normality held. However, in our
framework of joint limits, such procedures break down, so we make assumption
(b).

In the low-dimensional case ($n$ fixed, $T\rightarrow\infty$), the LM test
statistic $LM_{n,T}$ and the Wald test statistic $W_{n,T}$ are
asymptotically equivalent in the sense that they all converge in
distribution to $\chi_{n}^{2}$.\footnote{%
The finite sample performance of these statistics is known to vary. \cite%
{parkphillips1988} established higher order approximations for a Wald test
of nonlinear restrictions in the finite dimensional case, and showed how to
improve performance of the test statistic. It may be possible to apply their
methodology to the large dimensional case.} In the large dimensional case ($%
n,T\rightarrow\infty$), Theorem \ref{thm Wald statistics} shows that $%
LM_{n,T}$ and $W_{n,T}$ are, again, asymptotically equivalent. The Wald test
requires an additional rate restriction (\ref{eqn wald test additional
assumption}), which is the price we pay for estimating $\Sigma^{-1}$ under
the alternative $H_1:\mu\neq \mu_0$.

Recall that a typical threshold estimator $\hat{\Sigma}_{\text{thres}}$ has $%
\|\hat{\Sigma}_{\text{thres}}^{-1}-(\Sigma^{\dagger})^{-1}\|_{\ell_2}=O_{p}%
\del[1]{s(\log n/T)^{1/2}}$, where $\Sigma^{\dagger}$ is some sparse truth
and $s$ is its sparsity index (see \cite{bickellevina2008} Theorem 1 with $%
q=0$). For this rate of convergence, a result like (\ref{eqn wald weak
convergence to standard normal}) requires, as both \cite{pesaranyamagata2012}
and \cite{fanliaoyao2015} have pointed out, $n\log n/T=o(1)$, which is
essentially a low-dimensional scenario. \cite{pesaranyamagata2012} and \cite%
{fanliaoyao2015} have hence come up with their own ingenious ways to relax
the condition $n\log n/T=o(1)$ and established results similar to (\ref{eqn
wald weak convergence to standard normal}) for their Wald test statistics in
the CAPM context.

In the case of our Wald test, if we also want to allow the interesting large
dimension case of $n/T\rightarrow \infty $, then assumption (a) and (\ref%
{eqn wald test additional assumption}) necessarily imply that $2\beta
_{2}+\beta _{1}<4$ and $\beta _{2}<3/2$, respectively. For example, we can
choose the special case $\beta _{1}=\beta _{2}=1$, so assumption (a) and (%
\ref{eqn wald test additional assumption}) reduce to 
\begin{equation*}
\frac{\log ^{5}n}{T}=o(1),\qquad \frac{n^{\frac{1}{2}}\cdot \log ^{3}n}{T}%
=o(1),
\end{equation*}%
the latter of which is the binding rate condition and the same as the rate
condition in Assumption 4.2 of \cite{fanliaoyao2015}.

In the simulation study below we compare our tests with test statistics that
use Ledoit and Wolf procedures to regularize the sample covariance matrix
estimator.

\subsection{Power Investigation}

In this section, we analyse the asymptotic distributions of the proposed
test statistics under the alternative hypothesis $H_{1}:\mu \neq \mu _{0}$.
In particular, we shall focus on a sequence of local alternatives $H_{1}:\mu
=\mu _{T}:=\mu _{0}+\theta /\sqrt{T}$, where $\max_{1\leq i\leq n}|\theta
_{i}|=O(\sqrt{\log n})$. We focus on the Wald test without loss of
generality.

\begin{thm}
\label{thm Wald local alternative} Suppose Assumptions \ref{assu normality}, %
\ref{assuBasic}, \ref{assu summability}, and \ref{assu summability 2} hold.
We make the following additional assumptions:

\begin{enumerate}[(a)]

\item 
\begin{enumerate}[(i)]

\item 
\begin{equation*}
\frac{n^{2\beta_2+\beta_1-3}\cdot\log^{5} n}{T}=o(1),
\end{equation*}

\item 
\begin{equation*}
\frac{n^{\beta_2-\frac{1}{2}}\cdot\log^3 n}{T}=o(1).
\end{equation*}
\end{enumerate}

\item Consider the Cholesky decomposition of $\Sigma$, i.e., $%
\Sigma=LL^{\intercal}$, where $L$ is an $n\times n$ nonsingular lower
triangular matrix with positive diagonal elements. Assume that $%
x_t:=L^{-1}(y_t-\mu)$ is cross-sectionally independent for any $t$, and for
some $\delta >0$ 
\begin{align*}
\limsup_{n,T\to \infty}\max_{1\leq i\leq n}\max_{1\leq t\leq T}\mathbb{E}%
\envert[1]{x_{t,i}}^{4+2\delta}&<\infty \\
\limsup_{n\to \infty}\frac{1}{n}\sum_{i=1}^{n}\envert[1]{(L^{-1}\theta)_i}%
^{2+\delta}&<\infty.
\end{align*}
\end{enumerate}

Then under $H_{1}:\mu =\mu _{0}+\theta /\sqrt{T}$, 
\begin{equation*}
\frac{W_{n,T}-n}{\sqrt{2n\del[1]{1+\frac{2}{n}\theta^{\intercal}\Sigma^{-1}%
\theta}}}-\frac{\theta ^{\intercal }\Sigma ^{-1}\theta}{\sqrt{2n%
\del[1]{1+\frac{2}{n}\theta^{\intercal}\Sigma^{-1}\theta}}}\xrightarrow{d}%
N(0 ,1).
\end{equation*}
\end{thm}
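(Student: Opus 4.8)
The plan is to reduce $W_{n,T}$ to an oracle quadratic form in which the estimated precision matrix $\tilde\Sigma^{-1}$ is replaced by the true $\Sigma^{-1}$, and then to prove a central limit theorem for that oracle form. First I would use the fact that the quadratic form estimator $\tilde\Sigma$ is built from the demeaned matrix $M_T$ and is therefore invariant to the constant shift $\theta/\sqrt T$; pathwise, $\tilde\Sigma$ has exactly the same realisation under $H_1$ as under $H_0$. Writing $\bar z:=\bar y-\mu$, so that $\sqrt T(\bar y-\mu_0)=\sqrt T\bar z+\theta$ under the local alternative, I would expand
\begin{equation*}
W_{n,T}=T\bar z^\intercal\tilde\Sigma^{-1}\bar z+2\sqrt T\,\bar z^\intercal\tilde\Sigma^{-1}\theta+\theta^\intercal\tilde\Sigma^{-1}\theta.
\end{equation*}
Using the Cholesky factor $\Sigma=LL^\intercal$ from assumption (b), set $x_t:=L^{-1}(y_t-\mu)$, $S:=L^{-1}\sqrt T\bar z$ (so $S_i=T^{-1/2}\sum_{t=1}^T x_{t,i}$) and $\eta:=L^{-1}\theta$. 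Then $\var(x_t)=I_n$, the $S_i$ are independent across $i$ with $\mathbb E S_i^2=1$, and $\theta^\intercal\Sigma^{-1}\theta=\|\eta\|_2^2=O(n)$ by the power-mean inequality applied to the $(2+\delta)$-moment bound on $L^{-1}\theta$ in assumption (b).

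The oracle target is $(\sqrt T\bar z+\theta)^\intercal\Sigma^{-1}(\sqrt T\bar z+\theta)=\sum_{i=1}^n(S_i+\eta_i)^2$, and the central step is to show
\begin{equation*}
R_{n,T}:=(\sqrt T\bar z+\theta)^\intercal(\tilde\Sigma^{-1}-\Sigma^{-1})(\sqrt T\bar z+\theta)=o_p(\sqrt n),
\end{equation*}
which is the relevant scale since $\sqrt{2n(1+\tfrac2n\theta^\intercal\Sigma^{-1}\theta)}\asymp\sqrt n$. With $B:=L^\intercal\tilde\Sigma^{-1}L-I_n$, I would split $R_{n,T}=S^\intercal BS+2\eta^\intercal BS+\eta^\intercal B\eta$ and bound each term by $\|B\|_{\ell_2}$ times one of $\|S\|_2^2$, $\|\eta\|_2\|S\|_2$, or $\|\eta\|_2^2$, all of which are $O_p(n)$; it then suffices that $\|B\|_{\ell_2}=o_p(n^{-1/2})$. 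I expect this to be the main obstacle. Because $\tilde\Sigma^{-1}$ is symmetric, $\|B\|_{\ell_2}=\max_{1\le i\le n}|\lambda_i(\tilde\Sigma^{-1}\Sigma)-1|$, and the Kronecker structure gives $\tilde\Sigma^{-1}\Sigma=(\sigma^2/\hat\sigma^2)\bigotimes_{j=1}^v(\tilde\Sigma_j^{-1}\Sigma_j)$, so these eigenvalues are products over the $v=O(\log n)$ factors (Lemma \ref{lemmaOmega_j}(i)). Establishing $\max_i|\lambda_i(\tilde\Sigma^{-1}\Sigma)-1|=o_p(n^{-1/2})$ therefore requires the relative eigenvalue errors of the sub-matrix estimators to accumulate favourably across $O(\log n)$ Kronecker factors---precisely the delicate relative-error analysis underlying Theorems \ref{thm quadratic form final} and \ref{thm Wald statistics}, for which the rate conditions (a)(i)--(a)(ii) are calibrated. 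A naive bound $\|B\|_{\ell_2}\le\lambda_{\max}(\Sigma)\|\tilde\Sigma^{-1}-\Sigma^{-1}\|_{\ell_2}$ must be avoided, since $\lambda_{\max}(\Sigma)=\sigma^2\prod_j\lambda_{\max}(\Sigma_j)$ may grow polynomially in $n$.

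For the final step I would prove the CLT for the oracle form. Setting $g_i:=(S_i^2-1)+2\eta_i S_i$, cross-sectional independence of $x_t$ (assumption (b)) makes the $g_i$ independent across $i$, and
\begin{equation*}
\sum_{i=1}^n(S_i+\eta_i)^2=n+\|\eta\|_2^2+\sum_{i=1}^n g_i.
\end{equation*}
Using $\mathbb E S_i=0$, $\var(S_i)=1$, $\var(S_i^2)=2+O(T^{-1})$ and $\cov(S_i^2,S_i)=\mathbb E S_i^3=O(T^{-1/2})$, I would verify $\mathbb E g_i=0$ and $\sum_i\var(g_i)=2n+4\|\eta\|_2^2+o(n)=2n(1+\tfrac2n\theta^\intercal\Sigma^{-1}\theta)(1+o(1))$, matching the stated normaliser. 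Finally I would check Lyapunov's condition for $\sum_i g_i$: the uniform $(4+2\delta)$-moment bound on $x_{t,i}$ gives $\mathbb E|S_i|^{4+2\delta}=O(1)$ by Rosenthal's inequality, hence $\sum_i\mathbb E|g_i|^{2+\delta}\le Cn+C\sum_i|\eta_i|^{2+\delta}=O(n)$ by the second moment condition of assumption (b), while $(\sum_i\var(g_i))^{1+\delta/2}\asymp n^{1+\delta/2}$, so the Lyapunov ratio is $O(n^{-\delta/2})\to0$; alternatively the double-index CLT (Theorem \ref{thmdoubleindexCLT}) covers the joint $n,T\to\infty$ limit directly. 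Combining the oracle reduction with this CLT and rearranging yields the claimed convergence.
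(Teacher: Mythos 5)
Your decomposition of $W_{n,T}$ into an oracle quadratic form plus a remainder, and your CLT for the oracle part, are correct and essentially the same as the paper's: the paper packages your $\sum_{i}g_i$ argument as Theorem \ref{thm Keleijian} applied to $Q_{n,T}=\sum_i z_{T,i}^2+\sum_i b_{T,i}z_{T,i}$ with $b_{T,i}=2(L^{-1}\theta)_i$, with the identical variance computation $\var(Q_{n,T})=2n\del[1]{1+o(1)}+4\theta^{\intercal}\Sigma^{-1}\theta+o(n)$ and the identical Lyapounov verification. The genuine gap is in your treatment of the remainder $R_{n,T}$.

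Your route stands or falls on $\|B\|_{\ell_2}=\|L^{\intercal}(\tilde{\Sigma}^{-1}-\Sigma^{-1})L\|_{\ell_2}=o_p(n^{-1/2})$, and this is never proved: you assert it is delivered by the relative-error analysis of Theorems \ref{thm quadratic form final} and \ref{thm Wald statistics}, "for which the rate conditions (a)(i)--(a)(ii) are calibrated," but neither half of that assertion is accurate. Theorem \ref{thm quadratic form final}(vi) controls $\|\tilde{\Sigma}^{-1}-\Sigma^{-1}\|_{\ell_2}/\|\Sigma^{-1}\|_{\ell_2}$, and converting that into a bound on $\|L^{\intercal}(\tilde{\Sigma}^{-1}-\Sigma^{-1})L\|_{\ell_2}$ reinstates exactly the condition-number factor $\lambda_{\max}(\Sigma)/\lambda_{\min}(\Sigma)$ you rightly rule out; so a fresh argument is required (per-factor bounds $\max_i|\lambda_i(\tilde{\Sigma}_h^{-1}\Sigma_h)-1|\leq\|\Sigma_h\|_{\ell_2}\|\tilde{\Sigma}_h^{-1}-\Sigma_h^{-1}\|_{\ell_2}$ from Theorem \ref{thm quadratic form}(x) and Lemma \ref{lemma F norm submatrices}, accumulated over the $v=O(\log n)$ Kronecker factors together with the $\hat{\sigma}^2$ factor), i.e.\ essentially a re-run of the proof of Theorem \ref{thm quadratic form final} for a new functional, which your proposal sketches but does not carry out. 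Worse, what that analysis would demand is $n^{\beta_1-1}\log^3 n/T\to 0$ and $n^{1/2}\log^2 n/T\to 0$, conditions in which $\beta_2$ plays no role; one can check that assumptions (a)(i)--(a)(ii) imply both of these if and only if $\beta_2\geq 1$ (e.g.\ with $\beta_1=2$, $\beta_2=1/2$, $T=\log^6 n$, (a)(i)--(a)(ii) hold but $n^{1/2}\log^2 n/T\to\infty$). That mismatch is no accident: (a)(i)--(a)(ii) are calibrated to the paper's quite different bounding of the remainder, namely the H\"{o}lder-type inequality $|u^{\intercal}Au|\leq\|u\|_{\infty}^2\|A\|_1$ applied in the \emph{original} coordinates, where $\|u\|_{\infty}=\max_i|T^{-1/2}\sum_t(y_{t,i}-\mu_{T,i})|=O_p(\sqrt{\log n})$ by Bernstein's inequality (Lemma \ref{lemma rate for mu hat}, which exploits the all-moments Assumption \ref{assu normality}(ii) on $y_{t,i}$), and $\|A\|_1=\|\tilde{\Sigma}^{-1}-\Sigma^{-1}\|_1$ is controlled via Theorem \ref{thm quadratic form final}(iv) and Assumption \ref{assu summability 2} --- this is precisely where $\beta_2$ enters. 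Note that this route is unavailable in your whitened coordinates, since under assumption (b) the $x_{t,i}$ have only $4+2\delta$ moments, so $\max_i|S_i|$ is only $O_p(n^{1/(4+2\delta)})$, not $O_p(\sqrt{\log n})$. The cleanest repair is to drop the whitening for the remainder and bound it exactly as the paper does; alternatively, supply the missing Kronecker eigenvalue-accumulation argument and justify $\beta_2\geq 1$ (which in fact follows from the normalisation $\tr(\Sigma_j)=n_j$, since $\|\Sigma^{-1}\|_1\geq\tr(\Sigma^{-1})\geq n/\sigma^2$), neither of which is in your proposal.
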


\bigskip

The preceding theorem shows that the asymptotic distribution of $(W_{n,T}-n)/%
\sqrt{2n+4\theta^{\intercal}\Sigma^{-1}\theta}$ under $H_{1}$ has a center $%
\theta^{\intercal }\Sigma ^{-1}\theta /\sqrt{2n+4\theta^{\intercal}%
\Sigma^{-1}\theta}$. Note that 
\begin{equation*}
\frac{\theta ^{\intercal }\Sigma ^{-1}\theta }{\sqrt{2n+4\theta ^{\intercal
}\Sigma ^{-1}\theta }}\geq \frac{\theta ^{\intercal }\theta \lambda _{\min
}(\Sigma ^{-1})}{\sqrt{2n+4\theta ^{\intercal }\theta \lambda _{\max
}(\Sigma ^{-1})}}=\frac{\theta ^{\intercal }\theta /\lambda _{\max }(\Sigma )%
}{\sqrt{2n+4\theta ^{\intercal }\theta /\lambda _{\min }(\Sigma )}}.
\end{equation*}%
In the special case of $0<\lambda _{\min }(\Sigma )<\lambda _{\max }(\Sigma
)<\infty $, we see that the test has power against local alternatives that
satisfy $\max_{1\leq i\leq n}|\theta _{i}|=O(\sqrt{\log n})$ and $\theta
^{\intercal }\theta =O(n^{\delta _{a}}),$ where $\delta _{a}\geq 1/2,$ and
power tending to one in the case where $\delta _{a}>1/2.$ This specification
requires that $\theta $ has a sufficiently large number of non-zero
elements. It does not require that all the elements of $\theta $ are
non-zero.

\subsection{Testing Linear Restrictions of $\protect\mu$}

In this section, we consider testing linear restrictions of $\mu $ using two
approaches. We first consider $H_{0}:R\mu =r$, where $R$ is a $q\times n$
matrix of rank $q$. We assume that $q$ is a fixed number; this case covers
applications where a finite number of linear restrictions are coming from
economic theory. 
%We leave the case where $q$ is of order $n$ to future research.

\begin{thm}
\label{thm linear restrictions} Suppose Assumptions \ref{assu normality}, %
\ref{assuBasic} and \ref{assu summability} hold. We also make the following
assumptions:

\begin{enumerate}[(a)]

\item $\lambda_{\min}(\Sigma)$ is bounded away from zero by an absolute
positive constant.

\item Consider $H_{0}:R\mu =r$, where $R$ is a $q\times n $ matrix of rank $%
q $ for any fixed $n$ and $n\rightarrow \infty $ ($q$ is a fixed number).
Moreover, $R$ and $r$ are rescaled in such a way that $\lambda _{\min
}(RR^{\intercal })$ is bounded away from zero by an absolute constant, and 
\begin{equation}
\lambda _{\max }(RR^{\intercal })\Vert \Sigma \Vert _{\ell _{2}}\del[3]{
\sqrt{\frac{\log ^{3}n}{n^{2-\beta _{1}}T}}+\frac{\log ^{2}n}{T}} =o(1).
\label{eqn random2}
\end{equation}
\end{enumerate}

Then under $H_0: R\mu=r$, if $\log^3n/T\to 0$ as $n,T\to \infty$, 
\begin{equation*}
W_{n,T}^*:=T(R\bar{y}-r)^{\intercal}(R\tilde{\Sigma} R^{\intercal})^{-1}(R%
\bar{y}-r)\xrightarrow{d}\chi^2_q.
\end{equation*}
\end{thm}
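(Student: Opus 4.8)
The plan is to split $W_{n,T}^{*}$ into an infeasible statistic built from the true $R\Sigma R^{\intercal}$ and a remainder due to estimating $\Sigma$. Under $H_{0}$ we have $R\bar{y}-r=R(\bar{y}-\mu)$, so writing $u:=\sqrt{T}(R\bar{y}-r)=\sqrt{T}R(\bar{y}-\mu)$, $A:=R\Sigma R^{\intercal}$ and $\tilde{A}:=R\tilde{\Sigma}R^{\intercal}$, the statistic becomes $W_{n,T}^{*}=u^{\intercal}\tilde{A}^{-1}u$. Since $\{y_{t}\}$ are independent over $t$ with common covariance $\Sigma$ (Assumption \ref{assu normality}(i) and weak stationarity), $\var(u)=A$ exactly. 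I would first show the infeasible quantity $u^{\intercal}A^{-1}u\xrightarrow{d}\chi_{q}^{2}$, and then show $W_{n,T}^{*}-u^{\intercal}A^{-1}u=o_{p}(1)$, so that the conclusion follows by Slutsky's theorem.

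For the first step, because $q$ is fixed this is a genuinely finite-dimensional CLT. I would prove $A^{-1/2}u\xrightarrow{d}N(0,I_{q})$ and then read off $u^{\intercal}A^{-1}u=\|A^{-1/2}u\|_{2}^{2}\xrightarrow{d}\chi_{q}^{2}$. To establish the normal limit I would use the Cramer--Wold device: for a fixed unit vector $a\in\mathbb{R}^{q}$ set $b^{\intercal}:=a^{\intercal}A^{-1/2}R$, so that $a^{\intercal}A^{-1/2}u=T^{-1/2}\sum_{t=1}^{T}b^{\intercal}(y_{t}-\mu)$ is a scalar sum of independent, mean-zero terms with $\var(b^{\intercal}(y_{t}-\mu))=b^{\intercal}\Sigma b=a^{\intercal}A^{-1/2}AA^{-1/2}a=1$. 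A Lyapounov CLT over $t$ then applies; the Lyapounov ratio is controlled by the uniform moment bound of Assumption \ref{assu normality}(ii) together with the kurtosis restriction of Assumption \ref{assu normality}(iii), which keeps the fourth moment of the unit-variance projection $b^{\intercal}(y_{t}-\mu)$ bounded uniformly in $n$. Alternatively one may invoke the double-index CLT of Theorem \ref{thmdoubleindexCLT}.

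For the second step let $w:=A^{-1/2}u$, so $\|w\|_{2}^{2}=u^{\intercal}A^{-1}u=O_{p}(1)$ by the first step, and write $B:=A^{1/2}\tilde{A}^{-1}A^{1/2}$, whence $W_{n,T}^{*}=w^{\intercal}Bw$ and $W_{n,T}^{*}-\|w\|_{2}^{2}=w^{\intercal}(B-I_{q})w$. It therefore suffices to show $\|B-I_{q}\|_{\ell_{2}}=o_{p}(1)$, since then $|w^{\intercal}(B-I_{q})w|\le\|w\|_{2}^{2}\|B-I_{q}\|_{\ell_{2}}=O_{p}(1)o_{p}(1)$. The clean way to get this is to bound $B^{-1}-I_{q}$ rather than $B-I_{q}$ directly: since $B^{-1}=A^{-1/2}\tilde{A}A^{-1/2}=I_{q}+A^{-1/2}(\tilde{A}-A)A^{-1/2}$, we have
\[
\|B^{-1}-I_{q}\|_{\ell_{2}}\le\lambda_{\min}(A)^{-1}\|\tilde{A}-A\|_{\ell_{2}}\le\lambda_{\min}(A)^{-1}\lambda_{\max}(RR^{\intercal})\,\|\tilde{\Sigma}-\Sigma\|_{\ell_{2}}.
\]
Here $\lambda_{\min}(A)\ge\lambda_{\min}(\Sigma)\lambda_{\min}(RR^{\intercal})$ is bounded away from zero by assumptions (a) and (b), while Theorem \ref{thm quadratic form final}(v) gives $\|\tilde{\Sigma}-\Sigma\|_{\ell_{2}}=\|\Sigma\|_{\ell_{2}}O_{p}(\sqrt{\log^{3}n/(n^{2-\beta_{1}}T)}+\log^{2}n/T)$. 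Hence $\|B^{-1}-I_{q}\|_{\ell_{2}}=O_{p}(\lambda_{\max}(RR^{\intercal})\|\Sigma\|_{\ell_{2}}(\sqrt{\log^{3}n/(n^{2-\beta_{1}}T)}+\log^{2}n/T))=o_{p}(1)$ exactly by the rescaling condition (\ref{eqn random2}). This simultaneously shows, via Weyl's inequality, that $\tilde{A}$ is invertible with probability tending to one and that $\|B-I_{q}\|_{\ell_{2}}\le\|B^{-1}-I_{q}\|_{\ell_{2}}/(1-\|B^{-1}-I_{q}\|_{\ell_{2}})=o_{p}(1)$.

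The main obstacle is precisely matching the single factor $\lambda_{\max}(RR^{\intercal})\|\Sigma\|_{\ell_{2}}$ that appears in condition (\ref{eqn random2}). The naive route of expanding $B-I_{q}=A^{1/2}(\tilde{A}^{-1}-A^{-1})A^{1/2}$ produces a stray factor $\lambda_{\max}(A)\asymp\lambda_{\max}(RR^{\intercal})\|\Sigma\|_{\ell_{2}}$, which would demand the square of the quantity in (\ref{eqn random2}) and break the rate; bounding the symmetric perturbation $B^{-1}-I_{q}=A^{-1/2}(\tilde{A}-A)A^{-1/2}$ instead lets only the harmless $\lambda_{\min}(A)^{-1}=O(1)$ enter. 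A secondary point requiring care is that the summands $b^{\intercal}(y_{t}-\mu)$ in the CLT depend on the growing dimension $n$ through $b$; this is why the uniform control of the fourth moment of unit-variance linear combinations, coming from Assumption \ref{assu normality}(iii), is needed rather than mere coordinatewise moment bounds.
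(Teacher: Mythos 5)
Your proposal is correct and shares the paper's overall architecture---split $W_{n,T}^{*}$ into an infeasible statistic built from $A:=R\Sigma R^{\intercal}$ plus an estimation-error remainder, prove a $q$-dimensional CLT for the former, and show the latter is $o_{p}(1)$---but your handling of the remainder is genuinely different and in one respect sharper. The paper bounds the remainder by $\bigl(\max_{1\leq i\leq q}|T^{-1/2}\sum_{t}[R(y_{t}-\mu)]_{i}|\bigr)^{2}\,\|(R\tilde{\Sigma}R^{\intercal})^{-1}-(R\Sigma R^{\intercal})^{-1}\|_{1}$, converts $\ell_{1}$ to spectral norms at a cost $q^{3/2}$, checks $\|(R\Sigma R^{\intercal})^{-1}\|_{1}=O(1)$ via Lemma \ref{lemmasandwich eigenvalue}, and then invokes the inversion-rate Lemma \ref{lemma saikkonen lemma} to conclude. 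Note that the paper's claim that the max-coordinate factor is $O_{p}(1)$ implicitly uses boundedness of the diagonal entries of $R\Sigma R^{\intercal}$ (which holds under the sufficient condition $\lambda_{\max}(RR^{\intercal})\|\Sigma\|_{\ell_{2}}=O(1)$ discussed after the theorem, but is not implied by (\ref{eqn random2}) alone). Your device of normalizing first---writing $W_{n,T}^{*}=w^{\intercal}Bw$ with $w=A^{-1/2}u=O_{p}(1)$ and $B=A^{1/2}\tilde{A}^{-1}A^{1/2}$, and perturbing $B^{-1}-I_{q}=A^{-1/2}(\tilde{A}-A)A^{-1/2}$ so that only $\lambda_{\min}(A)^{-1}=O(1)$ enters---consumes condition (\ref{eqn random2}) exactly once and remains valid even if $\lambda_{\max}(RR^{\intercal})\|\Sigma\|_{\ell_{2}}$ diverges slowly; your diagnosis that the naive difference-of-inverses route picks up a stray factor $\lambda_{\max}(A)$ is also accurate. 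What each approach buys: the paper's is shorter because it can recycle its general-purpose lemmas, while yours is self-contained and robust under the literal hypotheses of the theorem.

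One caveat on your CLT step. The assertion that Assumption \ref{assu normality}(iii) keeps $\mathbb{E}[(b^{\intercal}(y_{t}-\mu))^{4}]$ bounded does not follow as stated: (iii) is a \emph{componentwise, one-sided} inequality on $\var[(y_{t}-\mu)\otimes(y_{t}-\mu)]$, and expanding the fourth moment of a projection produces signed weights $(b\otimes b)_{\alpha}(b\otimes b)_{\beta}$, against which an entrywise bound cannot be summed (the paper only ever applies (iii) with nonnegative, indeed unit, coefficients). This is not a gap relative to the paper's own standard, however: the paper's proof simply ``invokes a version of the multivariate central limit theorem'' for the triangular array $L_{R}^{-1}R(y_{t}-\mu)$ without verifying any Lindeberg or Lyapounov condition, so your Cram\'{e}r--Wold reduction is, if anything, more explicit. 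To make either argument airtight one would need an extra assumption controlling moments of linear combinations of $y_{t}-\mu$, in the spirit of assumption (b) of Theorem \ref{thm Wald statistics}.
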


\bigskip

Assumption (a) strengthens Assumption \ref{assuBasic}(ii) slightly, which is
a mild condition. A sufficient condition for (\ref{eqn random2}) in
assumption (b) is $\lambda_{\max}(RR^{\intercal})$ is bounded from above by
an absolute positive constant and $\|\Sigma\|_{\ell_2}<\infty$. The
requirement of $\lambda_{\min}(R R^{\intercal})$ and $\lambda_{\max}(RR^{%
\intercal})$ being bounded away from zero and from above by absolute
positive constants, respectively, could be achieved by normalising each row
of $R$ to have $\ell_2$ norm of 1.

We next take another approach to derive simultaneous confidence intervals
for \textit{all} linear combinations of $\mu$.

\begin{lemma}
\label{lemma simu confi} Suppose Assumptions \ref{assu normality}, \ref%
{assuBasic}, \ref{assu summability}, and \ref{assu summability 2} hold.
Simultaneously for all $\phi \in \mathbb{R}^{n}$, the unknown $\mu $
satisfies the following inequalities with confidence $1-\alpha $: 
\begin{equation*}
\frac{T\sbr[1]{\phi^{\intercal}(\bar{y}-\mu)}^{2}/\phi ^{\intercal }\tilde{%
\Sigma}\phi -n}{\sqrt{2n}}<z_{\alpha },
\end{equation*}%
as $n,T\rightarrow \infty $, where $z_{\alpha }$ is the upper $\alpha $
percentile of a standard normal.
\end{lemma}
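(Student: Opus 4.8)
The plan is to convert the \emph{simultaneous} statement over all $\phi$ into a \emph{single} scalar statement about a Wald-type quadratic form, and then to read that statement off from Theorem \ref{thm Wald statistics}. The bridge is the elementary maximization identity: for any symmetric positive definite matrix $A$ and any vector $a$,
\[
\sup_{\phi\neq 0}\frac{\del[1]{\phi^{\intercal}a}^{2}}{\phi^{\intercal}A\phi}=a^{\intercal}A^{-1}a,
\]
the supremum being attained at $\phi=A^{-1}a$; this is just the Cauchy--Schwarz inequality in the inner product induced by $A$. Since the quadratic form estimator $\tilde{\Sigma}$ is positive definite (see Section \ref{sec estimation}), applying the identity with $A=\tilde{\Sigma}$ and $a=\bar{y}-\mu$ gives
\[
\sup_{\phi\neq 0}\frac{T\sbr[1]{\phi^{\intercal}(\bar{y}-\mu)}^{2}}{\phi^{\intercal}\tilde{\Sigma}\phi}=T(\bar{y}-\mu)^{\intercal}\tilde{\Sigma}^{-1}(\bar{y}-\mu).
\]

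The key observation is that the right-hand side is exactly the Wald statistic $W_{n,T}$ of (\ref{wnt}) with the reference vector $\mu_{0}$ set equal to the \emph{true} mean $\mu$. With this choice the null $H_{0}:\mu=\mu_{0}$ holds identically, so Theorem \ref{thm Wald statistics}---whose rate and moment conditions (parts (a) and (b) together with (\ref{eqn wald test additional assumption})) I take to be maintained here---yields
\[
\frac{T(\bar{y}-\mu)^{\intercal}\tilde{\Sigma}^{-1}(\bar{y}-\mu)-n}{\sqrt{2n}}\xrightarrow{d}N(0,1).
\]

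It then remains to identify the two events. For each fixed $\phi\neq 0$ the inequality in the lemma is equivalent to $T\sbr[1]{\phi^{\intercal}(\bar{y}-\mu)}^{2}/\phi^{\intercal}\tilde{\Sigma}\phi<n+\sqrt{2n}\,z_{\alpha}$; because the supremum above is attained (at $\phi=\tilde{\Sigma}^{-1}(\bar{y}-\mu)$), requiring this for \emph{all} $\phi$ is \emph{exactly} equivalent to the single inequality $\sbr[1]{T(\bar{y}-\mu)^{\intercal}\tilde{\Sigma}^{-1}(\bar{y}-\mu)-n}/\sqrt{2n}<z_{\alpha}$. Taking probabilities, invoking the displayed weak convergence, and using that $z_{\alpha}$ is a continuity point of the standard normal distribution function with $\Phi(z_{\alpha})=1-\alpha$, I conclude that the simultaneous inequalities hold with asymptotic probability $1-\alpha$, which is the assertion. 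The analytic substance is thus entirely inherited from Theorem \ref{thm Wald statistics}; the only genuine step is the conceptual reduction to the Wald statistic evaluated at the true mean, and after the maximization identity the event-equivalence is routine. The resulting coverage is \emph{asymptotic}, as is to be expected from the underlying central limit theorem.
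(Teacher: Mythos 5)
Your proof is correct and is essentially the paper's own argument: both invoke Theorem \ref{thm Wald statistics} with $\mu_{0}$ set equal to the true mean $\mu$ (so the null holds trivially) and then pass to the simultaneous statement over all $\phi$ via the generalized Cauchy--Schwarz inequality (the paper's Lemma \ref{lemma general cauchy}); your attained-supremum formulation is just the sharp form of that inequality, so the event identification is the same. Your explicit caveat that conditions (a), (b) and (\ref{eqn wald test additional assumption}) of Theorem \ref{thm Wald statistics} must be maintained is, if anything, more careful than the paper's proof, which invokes that theorem under the lemma's stated assumptions alone.
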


One disadvantage of this approach is that the confidence region for $\mu$
could be conservative.

\section{Simulation Study}

\label{sec simulation}

In this section, we provide some Monte Carlo simulations that evaluate
performance of our procedures.

\subsection{The Correctly Specified Case}

We suppose that $y_{t}\sim N(\mu,\Sigma)$ with $\Sigma=\Sigma_{1}\otimes
\cdots\otimes\Sigma_{v}$, where 
\begin{equation*}
\Sigma_{j}=\left( 
\begin{array}{cc}
1 & \rho_{j} \\ 
\rho_{j} & 1%
\end{array}
\right) \qquad|\rho_{j}|<1, \quad j=1,\ldots,v,
\end{equation*}
so in this case $\Sigma$ is also the correlation matrix. Given $\|\Sigma
_{j}\|_{F}^{2}=2(1+\rho_{j}^{2})$, we have 
\begin{align*}
\frac{1}{n^{\beta_1}}\|\Sigma\|_{F}^{2}=\frac{1}{n^{\beta_1}}%
\prod_{j=1}^{v}2(1+\rho_{j}^{2})=n^{1-\beta_1}\prod_{j=1}^{v}(1+%
\rho_{j}^{2}).
\end{align*}
Since $\prod_{j=1}^{v}(1+\rho_{j}^{2})\geq 1$, Assumption \ref{assu
summability} necessarily implies $\beta_1\geq 1$. When $\beta_1=1$, $\frac{1%
}{n^{\beta_1}}\|\Sigma\|_{F}^{2}=\prod_{j=1}^{v}(1+\rho_{j}^{2})$, which
converges to a finite, non-zero limit as $v\rightarrow\infty$ if and only if 
$\sum_{j=1}^{v}\rho_{j}^{2}$ converges (\cite{knopp1947} Theorem 28.3). 
%This is to say that in the setting of $\beta_1=1$, $\Sigma$ satisfies Assumption \ref{assu summability} if and only if $\sum_{j=1}^{\infty}\rho_{j}^{2}<\infty$. 
When $\beta_1>1$, Assumption \ref{assu summability} is satisfied if $%
\prod_{j=1}^{v}(1+\rho_{j}^{2})=O(n^{\beta_1-1})$.

Likewise 
\begin{equation*}
\Vert \Sigma _{j}^{-1}\Vert _{1}=\frac{2(1+|\rho _{j}|)}{1-\rho _{j}^{2}}=%
\frac{2}{1-|\rho _{j}|}\qquad j=1,\ldots ,v,
\end{equation*}%
so that via Lemma \ref{lemma F norm Kronecker} in Section \ref{sec AppendixB}
\begin{equation*}
\frac{1}{n^{\beta_2}}\Vert \Sigma ^{-1}\Vert _{1}=\frac{1}{n^{\beta_2}}%
\prod_{j=1}^{v}\Vert \Sigma _{j}^{-1}\Vert _{1}=n^{1-\beta_2}\prod_{j=1}^{v}%
\frac{1}{1-|\rho _{j}|}=n^{1-\beta_2}\frac{1}{\prod_{j=1}^{v}%
\del[1]{1-|\rho_j|}}.
\end{equation*}%
Since $\prod_{j=1}^{v}\del[1]{1-|\rho_j|}\leq 1$, Assumption \ref{assu
summability 2} necessarily implies $\beta_2\geq 1$. When $\beta_2=1$, $\frac{%
1}{n^{\beta_2}}\Vert \Sigma ^{-1}\Vert _{1}=1/\prod_{j=1}^{v}%
\del[1]{1-|\rho_j|}$, the denominator of which converges to a finite,
non-zero limit as $v\rightarrow \infty $ if and only if $\sum_{j=1}^{v}|\rho
_{j}|$ converges (\cite{knopp1947} Theorem 28.4). When $\beta_2>1$,
Assumption \ref{assu summability 2} is satisfied if $\sbr[1]{\prod_{j=1}^{v}%
\del[1]{1-|\rho_j|}}^{-1}=O(n^{\beta_2-1})$.\footnote{%
Furthermore, the largest eigenvalue of $\Sigma $ is $\prod_{j=1}^{v}(1+|\rho
_{j}|)$, which converges as $v\rightarrow \infty $ if and only if $%
\sum_{j=1}^{v}|\rho _{j}|$ converges (\cite{knopp1947} Theorem 28.3).}

We consider $\mu=0$, $n=2^{v}$, and $\rho_{j}=\rho^{j}$ for $j=1,\ldots,v$.
The number of Monte Carlo simulations is 1000. We compare the quadratic form
estimator with \cite{ledoitwolf2004}'s linear shrinkage estimator (the LW04
estimator hereafter) and \cite{ledoitwolf2017}'s direct nonlinear shrinkage
estimator (the LW17 estimator hereafter).\footnote{%
The Matlab code for the LW04 and LW17 estimators is downloaded from the
website of Professor Michael Wolf from the Department of Economics at the
University of Zurich. We are grateful for this.}

The first evaluation criterion is the \textit{relative mean square error}
(MSE) in terms of $\Sigma $. Given a generic estimator $\hat{\Sigma}_{G}$ of
the covariance matrix $\Sigma $, we compute 
\begin{equation*}
\frac{\mathbb{E}\Vert \hat{\Sigma}_{G}-\Sigma \Vert _{F}^{2}}{\Vert \Sigma
\Vert _{F}^{2}}
\end{equation*}%
%
%\begin{align*}
%\frac{\mathbb{E}\Vert\hat{\Sigma}_{G}-\Sigma\Vert_{F}^{2}}{\Vert\Sigma\Vert_{F}^{2}}&=\frac{\mathbb{E}\Vert\hat{\Sigma}_{G}-\mathbb{E}[\hat{\Sigma}_{G}]\Vert_{F}^{2}}{\Vert\Sigma\Vert_{F}^{2}}+\frac{\Vert\mathbb{E}[\hat{\Sigma}_{G}]-\Sigma\Vert_{F}^{2}}{\Vert\Sigma\Vert_{F}^{2}}=:\text{Variance} + \text{Bias}^2
%\end{align*}
where the expectation operator is taken with respect to all the simulations.
Often the precision matrix $\Sigma ^{-1}$ is of more interest than $\Sigma $%
, so we also compute the MSE of the estimator of $\Sigma ^{-1}$: 
\begin{equation*}
\frac{\mathbb{E}\Vert \hat{\Sigma}_{G}^{-1}-\Sigma ^{-1}\Vert _{F}^{2}}{%
\Vert \Sigma ^{-1}\Vert _{F}^{2}}
\end{equation*}%
%
%\begin{align*}
%\frac{\mathbb{E}\Vert\hat{\Sigma}_{G}^{-1}-\Sigma^{-1}\Vert_{F}^{2}}{\Vert\Sigma^{-1}\Vert_{F}^{2}}&=\frac{\mathbb{E}\Vert\hat{\Sigma}_{G}^{-1}-\mathbb{E}[\hat{\Sigma}_{G}^{-1}]\Vert_{F}^{2}}{\Vert\Sigma^{-1}\Vert_{F}^{2}}+\frac{\Vert\mathbb{E}[\hat{\Sigma}_{G}^{-1}]-\Sigma^{-1}\Vert_{F}^{2}}{\Vert\Sigma^{-1}\Vert_{F}^{2}}=:\text{Variance} + \text{Bias}^2
%\end{align*}
where the expectation operator is taken with respect to all the simulations.
Note that this requires invertibility of the generic estimator $\hat{\Sigma}%
_{G}$ and therefore cannot be calculated for the sample covariance matrix $%
M_{T}$ when $n>T$.

We next calculate 
\begin{equation*}
1-\frac{\mathbb{E}\Vert\hat{\Sigma}_{G}-\Sigma\Vert_{F}^{2}}{\mathbb{E}\Vert
M_{T}-\Sigma\Vert_{F}^{2}},
\end{equation*}
where the expectation operator is taken with respect to all the simulations.
The preceding display is called the simulated \textit{percentage relative
improvement in average loss} (PRIAL) criterion in terms of $\Sigma$ by \cite%
{ledoitwolf2004}. The PRIAL measures the performance of the generic
estimator $\hat{\Sigma}_{G}$ with respect to the sample covariance estimator 
$M_{T}$. Note that PRIAL$\in(-\infty,1]$: A negative value means $\hat{%
\Sigma }_{G}$ performs worse than $M_{T}$ while a positive value means
otherwise. Likewise we also compute 
\begin{equation*}
1-\frac{\mathbb{E}\Vert\hat{\Sigma}_{G}^{-1}-\Sigma^{-1}\Vert_{F}^{2}}{%
\mathbb{E}\Vert M_{T}^{-1}-\Sigma^{-1}\Vert_{F}^{2}}.
\end{equation*}
Note that this requires invertibility of the sample covariance matrix $M_{T}$
and therefore can only be calculated for $n<T$.

Finally, we consider testing $H_0:\mu=0$ against $H_1:\mu\neq 0$. We compute
sizes of the LM and Wald tests (Theorem \ref{thm Wald statistics}). The
significance level is 5\%. To investigate power, we generate $\mu_i$ as $%
\mu_i\overset{i.i.d.}{\sim}N(0,1)/\sqrt{T}$ for $i=1,2,\ldots,\lfloor
n^{0.7}\rfloor$, where $\lfloor x \rfloor$ is the largest integer less than
or equal to $x$; $\mu_i=0$ for $i=\lfloor n^{0.7}\rfloor+1, \lfloor
n^{0.7}\rfloor+2,\ldots, n$. These also require invertibility of $\hat{\Sigma%
}_{G}$.

The results are reported in Tables \ref{table correct spe}-\ref{table
correct spe 2}. In Table \ref{table correct spe}, we set $T=252$ and $v=10$
so that $n=2^v=1024$; we set $\rho=0.5,0.7,0.85$. First, consider the top
panel ($\rho=0.5$). For the MSE in terms of $\Sigma$ (i.e., MSE-1), all the
estimators beat the sample covariance matrix $M_{T}$ by a large margin. The
quadratic form estimator $\tilde{\Sigma}$ also outperformed the LW04 and
LW17 estimators considerably. For the MSE in terms of $\Sigma^{-1}$ (i.e.,
MSE-2), a similar pattern exists. Note that the MSE-2 cannot be computed for 
$M_{T}$ because $M_{T}$ is not invertible when $n>T$. For the PRIAL in terms
of $\Sigma$ (i.e., PRIAL-1), again $\tilde{\Sigma}$ is better than the LW04
and LW17 estimators. The sample covariance matrix $M_{T}$ has zero PRIAL-1
by definition. The superiority of $\tilde{\Sigma}$ in this experiment is
expected because the true covariance matrix is indeed a Kronecker product.

Considering the size of the Wald test, we realize that the quadratic form
estimator $\tilde{\Sigma}$ has the correct size while the LW04 and LW17
estimators are over-sized. Note that the Wald test is not defined for $M_{T}$
because $M_{T}$ is not invertible. Size of the LM test is similar to that of
the Wald test for $\tilde{\Sigma}$, but the LM test seems to perform poorly
for both the LW04 and LW17 estimators. Undoubtedly, the quadratic form
estimator $\tilde{\Sigma}$ is the best performing estimator.

As we increase the "mother" correlation parameter $\rho $ from 0.5 to 0.85,
performance of $\tilde{\Sigma}$ remains unchanged across all five criteria.
In terms of MSE-1, performance of $M_{T}$ improves while performances of
LW04 and LW17 estimators initially worsen and then improve. In terms of
MSE-2, PRIAL-1, the size of the LM test, and the size of the Wald test, the
performances of both the LW04 and LW17 estimators worsen. Again the
quadratic form estimator $\tilde{\Sigma}$ is the best performing estimator.

\begin{table}[ptb]
\centering
\begin{tabular}{lcccc}
\toprule & $M_{T}$ & $\tilde{\Sigma}$ & LW04 & LW17 \\ 
\midrule & \multicolumn{4}{c}{$\rho=0.5$} \\ 
MSE-1 & 2.989 & 0.000 & 0.242 & 0.243 \\ 
MSE-2 & NA & 0.000 & 0.311 & 0.308 \\ 
PRIAL-1 & 0 & 1.000 & 0.919 & 0.919 \\ 
size of LM & NA & 0.051 & 1.000 & 1.000 \\ 
size of Wald & NA & 0.050 & 0.085 & 0.093 \\ 
\cmidrule(lr){2-5} & \multicolumn{4}{c}{$\rho=0.7$} \\ 
MSE-1 & 1.760 & 0.000 & 0.429 & 0.430 \\ 
MSE-2 & NA & 0.000 & 0.722 & 0.715 \\ 
PRIAL-1 & 0 & 1.000 & 0.756 & 0.756 \\ 
size of LM & NA & 0.050 & 1.000 & 1.000 \\ 
size of Wald & NA & 0.051 & 0.158 & 0.164 \\ 
\cmidrule(lr){2-5} & \multicolumn{4}{c}{$\rho=0.85$} \\ 
MSE-1 & 0.501 & 0.001 & 0.320 & 0.316 \\ 
MSE-2 & NA & 0.002 & 0.980 & 0.980 \\ 
PRIAL-1 & 0 & 0.998 & 0.360 & 0.370 \\ 
size of LM & NA & 0.051 & 1.000 & 1.000 \\ 
size of Wald & NA & 0.060 & 0.334 & 0.329 \\ 
\bottomrule &  &  &  & 
\end{tabular}%
\caption{{\protect\small $M_{T}$, $\tilde{\Sigma}$, LW04 and LW17 stand for
the sample covariance matrix, quadratic form estimator, \protect\cite%
{ledoitwolf2004}'s linear shrinkage estimator, and \protect\cite%
{ledoitwolf2017}'s direct nonlinear shrinkage estimator, respectively. MSE-1
and MSE-2 are the MSE in terms of $\Sigma$ and $\Sigma^{-1}$, respectively.
PRIAL-1 is the PRIAL in terms of $\Sigma$. $T=252$ and $n=2^{10}=1024$.
0.000 means less than 0.001.}}
\label{table correct spe}
\end{table}

Next, we fix $\rho$ at 0.7 and examine effects of $n$ and $T$; the results
are reported in Table \ref{table correct spe 1}. If we fix $T$ at 252 and
increase $v$ (and hence $n$), in terms of MSE-1, all the estimators except
the quadratic form estimator $\tilde{\Sigma}$ worsen. The same pattern is
observed when we use the MSE-2 criterion instead (the sample covariance
matrix $M_T$ dropped out in this case). In terms of PRIAL-1, we see that all
the candidate estimators are becoming increasingly superior to $M_T$. As $n$
increases, size of the Wald test worsens for all the estimators except $%
\tilde{\Sigma}$; a similar pattern is observed for the LM test. If we
increase $T$ from 252 to 504, all the estimators improve in terms of both
the MSE-1 and MSE-2 criteria. Also sizes of the Wald and LM tests in general
improve for all the estimators.

\begin{table}[ptb]
\centering
\begin{tabular}{lccccccccc}
\toprule & $M_{T}$ & $\tilde{\Sigma}$ & LW04 & LW17 &  & $M_{T}$ & $\tilde{%
\Sigma}$ & LW04 & LW17 \\ 
\midrule & \multicolumn{4}{c}{$n=2^{9}$, $T=252$} &  & \multicolumn{4}{c}{$%
n=2^{9}$, $T=504$} \\ 
MSE-1 & 0.882 & 0.001 & 0.346 & 0.345 &  & 0.442 & 0.000 & 0.249 & 0.246 \\ 
MSE-2 & NA & 0.001 & 0.676 & 0.656 &  & NA & 0.000 & 0.601 & 0.531 \\ 
PRIAL-1 & 0 & 0.999 & 0.608 & 0.609 &  & 0 & 0.999 & 0.438 & 0.443 \\ 
size of LM & NA & 0.039 & 1.000 & 1.000 &  & NA & 0.053 & 1.000 & 1.000 \\ 
size of Wald & NA & 0.041 & 0.153 & 0.149 &  & NA & 0.058 & 0.148 & 0.151 \\ 
\cmidrule(lr){2-5} \cmidrule(lr){7-10} & \multicolumn{4}{c}{$n=2^{10}$, $%
T=252$} &  & \multicolumn{4}{c}{$n=2^{10}$, $T=504$} \\ 
MSE-1 & 1.760 & 0.000 & 0.429 & 0.430 &  & 0.882 & 0.000 & 0.345 & 0.344 \\ 
MSE-2 & NA & 0.000 & 0.722 & 0.715 &  & NA & 0.000 & 0.677 & 0.659 \\ 
PRIAL-1 & 0 & 1.000 & 0.756 & 0.756 &  & 0 & 1.000 & 0.608 & 0.610 \\ 
size of LM & NA & 0.050 & 1.000 & 1.000 &  & NA & 0.059 & 1.000 & 1.000 \\ 
size of Wald & NA & 0.051 & 0.158 & 0.164 &  & NA & 0.062 & 0.168 & 0.168 \\ 
\cmidrule(lr){2-5} \cmidrule(lr){7-10} & \multicolumn{4}{c}{$n=2^{11}$, $%
T=252$} &  & \multicolumn{4}{c}{$n=2^{11}$, $T=504$} \\ 
MSE-1 & 3.514 & 0.000 & 0.489 & 0.490 &  & 1.760 & 0.000 & 0.429 & 0.429 \\ 
MSE-2 & NA & 0.000 & 0.747 & 0.744 &  & NA & 0.000 & 0.723 & 0.717 \\ 
PRIAL-1 & 0 & 1.000 & 0.861 & 0.861 &  & 0 & 1.000 & 0.756 & 0.757 \\ 
size of LM & NA & 0.057 & 1.000 & 1.000 &  & NA & 0.057 & 1.000 & 1.000 \\ 
size of Wald & NA & 0.067 & 0.202 & 0.221 &  & NA & 0.060 & 0.169 & 0.181 \\ 
\bottomrule &  &  &  &  &  &  &  &  & 
\end{tabular}%
\caption{{\protect\small $M_{T}$, $\tilde{\Sigma}$, LW04 and LW17 stand for
the sample covariance matrix, quadratic form estimator, \protect\cite%
{ledoitwolf2004}'s linear shrinkage estimator, and \protect\cite%
{ledoitwolf2017}'s direct nonlinear shrinkage estimator, respectively. MSE-1
and MSE-2 are the MSE in terms of $\Sigma$ and $\Sigma^{-1}$, respectively.
PRIAL-1 is the PRIAL in terms of $\Sigma$. $\protect\rho=0.7$. 0.000 means
less than 0.001.}}
\label{table correct spe 1}
\end{table}

\begin{table}[ptb]
\centering
\begin{tabular}{lccccccccc}
\toprule & $M_{T}$ & $\tilde{\Sigma}$ & LW04 & LW17 &  & $M_{T}$ & $\tilde{%
\Sigma}$ & LW04 & LW17 \\ 
\midrule & \multicolumn{4}{c}{$\rho=0.5$, $n=2^{9}$} &  & \multicolumn{4}{c}{%
$\rho=0.5$, $n=2^{10}$} \\ 
power of LM & NA & 0.890 & 0.730 & 0.866 &  & NA & 0.925 & 0.999 & 1.000 \\ 
power of Wald & NA & 0.905 & 0.689 & 0.734 &  & NA & 0.942 & 0.750 & 0.780
\\ 
\cmidrule(lr){2-5} \cmidrule(lr){7-10} & \multicolumn{4}{c}{$\rho=0.7$, $%
n=2^{9}$} &  & \multicolumn{4}{c}{$\rho=0.7$, $n=2^{10}$} \\ 
power of LM & NA & 1.000 & 1.000 & 1.000 &  & NA & 1.000 & 1.000 & 1.000 \\ 
power of Wald & NA & 1.000 & 0.742 & 0.833 &  & NA & 1.000 & 0.746 & 0.806
\\ 
\cmidrule(lr){2-5} \cmidrule(lr){7-10} & \multicolumn{4}{c}{$\rho=0.85$, $%
n=2^{9}$} &  & \multicolumn{4}{c}{$\rho=0.85$, $n=2^{10}$} \\ 
power of LM & NA & 1.000 & 1.000 & 1.000 &  & NA & 1.000 & 1.000 & 1.000 \\ 
power of Wald & NA & 1.000 & 0.990 & 1.000 &  & NA & 1.000 & 0.981 & 0.977
\\ 
\bottomrule &  &  &  &  &  &  &  &  & 
\end{tabular}%
\caption{{\protect\small $M_{T}$, $\tilde{\Sigma}$, LW04 and LW17 stand for
the sample covariance matrix, the quadratic form estimator, the \protect\cite%
{ledoitwolf2004}'s linear shrinkage estimator, and the \protect\cite%
{ledoitwolf2017}'s direct nonlinear shrinkage estimator, respectively. $%
T=252 $. Powers are not size-adjusted.}}
\label{table correct spe 2}
\end{table}

The results of power investigation are reported in Table \ref{table correct
spe 2}. We see that power of the quadratic form estimator $\tilde{\Sigma}$
is very good for the specified local alternative. Powers of the LW04 and
LW17 estimators in terms of the Wald test are less good. Powers of the LW04
and LW17 estimators in terms of the LM test come at a price of their sizes.

\subsection{The Misspecified Case}

To gauge how well the Kronecker product model performs when the true
covariance matrix does not have a Kronecker product form, we consider the
Monte Carlo setting used by \cite{ledoitwolf2004}. We still assume that $%
y_{t}\sim N(\mu,\Sigma)$. The true covariance matrix $\Sigma$ is \textit{%
diagonal} without loss of generality. The diagonal entries $\Sigma_{ii}$
(i.e., the eigenvalues of $\Sigma$) are log normally distributed: $%
\log\Sigma_{ii}\sim N(\mu_{\text{LW}},\sigma_{\text{LW}}^{2})$. \cite%
{ledoitwolf2004} defined the \textit{grand mean} $\mu_{\text{g}}$ and 
\textit{cross-sectional dispersion} $\alpha^{2}$ of the eigenvalues of $%
\Sigma$ as, respectively, 
\begin{equation*}
\mu_{\text{g}}:=\frac{1}{n}\sum_{i=1}^{n}\Sigma_{ii}\qquad\alpha^{2}:=\frac{1%
}{n}\sum_{i=1}^{n}(\Sigma_{ii}-\mu_{\text{g}})^{2}.
\end{equation*}
In the Monte Carlo simulations, we re-define $\mu_{\text{g}}$ and $%
\alpha^{2} $ as the corresponding population counterparts: 
\begin{equation*}
\mu_{\text{g}}=\mathbb{E}\Sigma_{ii}=e^{\mu_{\text{LW}}+\sigma_{\text{LW}%
}^{2}/2}\qquad\alpha^{2}=\var\Sigma_{ii}=e^{2(\mu_{\text{LW}}+\sigma_{\text{%
LW}}^{2})}-e^{2\mu_{\text{LW}}+\sigma_{\text{LW}}^{2}}.
\end{equation*}
\cite{ledoitwolf2004} set $\mu_{\text{g}}=1$, so we can solve $\mu_{\text{LW}%
}=-\log(1+\alpha ^{2})/2$ and $\sigma_{\text{LW}}^{2}=\log(1+\alpha^{2}),$
whence we have 
\begin{equation*}
\log\Sigma_{ii}\sim N\del[3]{-\frac{\log (1+\alpha^2)}{2}, \log (1+\alpha^2)}%
.
\end{equation*}
Note that in this data generating process, there are two sources of
randomness: one from the normal distribution of $y_{t}$ and the other from
the log normal distribution of $\Sigma_{ii}$. Also note that a diagonal
covariance matrix need not have a Kronecker product structure unless, say,
the diagonal elements are all equal. The number of Monte Carlo simulations
is again set at 1000. In the baseline setting of \cite{ledoitwolf2004}, $%
\mu=0, n=20 $, $T=40$, and $\alpha^{2}=0.5$.

There are a number of different Kronecker products that we can consider to
approximate $\Sigma $ (see \cite{hafnerlintontang2018} for more discussions
of model selection). The possible Kronecker factorizations are $5\times
2\times 2$, $4\times 5$, $2\times 10$. Within each Kronecker factorization,
we can further permute the Kronecker sub-matrices to obtain different
Kronecker models. We study \textit{all} the Kronecker products and compare
with the LW04 and LW17 estimators. All estimators do not use knowledge of $%
\mu =0$ and have to estimate it, except in the case of the LM test.

The results are reported in Table \ref{table LW base}. The first observation
is that the performance of the quadratic form estimator $\tilde{\Sigma}$ is
relatively robust to the Kronecker product factorization; the best
performing one is $2\times5\times2$. All the candidate estimators beat the
sample covariance matrix $M_{T}$. In terms of MSE-1 and MSE-2, the LW04 and
LW17 estimators are only slightly better than $\tilde{\Sigma}$ $%
(2\times5\times2)$. In terms of PRIAL-1 and PRIAL-2, $\tilde{\Sigma}$ $%
(2\times5\times2)$ is almost as good as the LW04 and LW17 estimators. In
terms of size of the LM test, $\tilde{\Sigma}$ $(2\times5\times2)$ has the
correct size while the LW04 and LW17 estimators are under-sized. In terms of
size of the Wald test, all candidate estimators are slightly over-sized.

\begin{table}[ptb]
\centering
\begin{tabular}{lccccc}
\toprule & \multirow{2}{*}{$M_T$} & $\tilde{\Sigma}$ & $\tilde{\Sigma}$ & $%
\tilde{\Sigma}$ & $\tilde{\Sigma}$ \\ 
&  & $(5\times2 \times2)$ & $(2\times5 \times2)$ & $(2\times2 \times5)$ & $%
(4 \times5)$ \\ 
\cmidrule(lr){2-6} MSE-1 & 0.446 & 0.137 & 0.136 & 0.137 & 0.140 \\ 
MSE-2 & 6.876 & 0.154 & 0.153 & 0.154 & 0.163 \\ 
PRIAL-1 & 0 & 0.684 & 0.685 & 0.682 & 0.675 \\ 
PRIAL-2 & 0 & 0.977 & 0.977 & 0.977 & 0.976 \\ 
size of LM & 0.004 & 0.043 & 0.050 & 0.038 & 0.038 \\ 
size of Wald & 0.690 & 0.092 & 0.087 & 0.081 & 0.094 \\ 
\cmidrule(lr){2-6} & $\tilde{\Sigma}$ & $\tilde{\Sigma}$ & $\tilde{\Sigma}$
& \multirow{2}{*}{LW04} & \multirow{2}{*}{LW17} \\ 
& $(5\times4)$ & $(2 \times10)$ & $(10 \times2)$ &  &  \\ 
\cmidrule(lr){2-6} MSE-1 & 0.139 & 0.189 & 0.188 & 0.113 & 0.129 \\ 
MSE-2 & 0.163 & 0.293 & 0.288 & 0.122 & 0.148 \\ 
PRIAL-1 & 0.679 & 0.570 & 0.571 & 0.738 & 0.702 \\ 
PRIAL-2 & 0.976 & 0.957 & 0.958 & 0.982 & 0.978 \\ 
size of LM & 0.041 & 0.035 & 0.028 & 0.022 & 0.015 \\ 
size of Wald & 0.100 & 0.163 & 0.167 & 0.074 & 0.083 \\ 
\bottomrule &  &  &  &  & 
\end{tabular}%
\caption{{\protect\small $M_{T}$, $\tilde{\Sigma}$, LW04 and LW17 stand for
the sample covariance matrix, quadratic form estimator (factorisations given
in parentheses), \protect\cite{ledoitwolf2004}'s linear shrinkage estimator,
and \protect\cite{ledoitwolf2017}'s direct nonlinear shrinkage estimator,
respectively. MSE-1 and MSE-2 are the MSE in terms of $\Sigma$ and $%
\Sigma^{-1}$, respectively. PRIAL-1 and PRIAL-2 are the PRIAL in terms of $%
\Sigma$ and $\Sigma^{-1}$, respectively. $n=20,T=40,\protect\alpha^2=0.5$.}}
\label{table LW base}
\end{table}

We next vary $\alpha^{2}$. We base the comparisons on the $2\times5\times2$
Kronecker product factorization. The results are reported in Table \ref%
{table LW alpha2 vary}. As $\alpha^{2}$ increases, performance of $M_{T}$
actually improves in terms of MSE-1 and MSE-2. On the other hand,
performances of $\tilde{\Sigma}$, the LW04 and LW17 estimators worsen in
terms of MSE-1, MSE-2, PRIAL-1 and PRIAL-2. The worsening performance of $%
\tilde{\Sigma}$ is not surprising because $\alpha^{2}$ can be interpreted as
the distance of $\Sigma$ from a Kronecker product model. The worsening
performance of the LW04 estimator has also been documented by \cite%
{ledoitwolf2004}. As $\alpha^2$ increases, $\tilde{\Sigma}$ has roughly
correct size for the LM test while both the LW04 and LW17 estimators are
under-sized. In terms of the Wald test, all the candidate estimators are
slightly over-sized.

\begin{table}[ptb]
\centering
\begin{tabular}{lccccccccc}
\toprule & \multirow{2}{*}{$M_T$} & $\tilde{\Sigma}$ & \multirow{2}{*}{LW04}
& \multirow{2}{*}{LW17} &  & \multirow{2}{*}{$M_T$} & $\tilde{\Sigma}$ & %
\multirow{2}{*}{LW04} & \multirow{2}{*}{LW17} \\ 
&  & $(2\times5 \times2)$ &  &  &  &  & $(2\times5 \times2)$ &  &  \\ 
\cmidrule(lr){2-5} \cmidrule(lr){6-10} & \multicolumn{4}{c}{$\alpha^{2}=0.25$%
} &  & \multicolumn{4}{c}{$\alpha^{2}=0.50$} \\ 
MSE-1 & 0.492 & 0.077 & 0.050 & 0.070 &  & 0.446 & 0.136 & 0.113 & 0.129 \\ 
MSE-2 & 7.405 & 0.089 & 0.048 & 0.086 &  & 6.876 & 0.153 & 0.122 & 0.148 \\ 
PRIAL-1 & 0 & 0.843 & 0.898 & 0.856 &  & 0 & 0.685 & 0.738 & 0.702 \\ 
PRIAL-2 & 0 & 0.988 & 0.993 & 0.988 &  & 0 & 0.977 & 0.982 & 0.978 \\ 
size of LM & 0.004 & 0.042 & 0.035 & 0.020 &  & 0.004 & 0.050 & 0.022 & 0.015
\\ 
size of Wald & 0.690 & 0.083 & 0.064 & 0.066 &  & 0.690 & 0.087 & 0.074 & 
0.083 \\ 
\cmidrule(lr){2-10} & \multirow{2}{*}{$M_T$} & $\tilde{\Sigma}$ & %
\multirow{2}{*}{LW04} & \multirow{2}{*}{LW17} &  & \multirow{2}{*}{$M_T$} & $%
\tilde{\Sigma}$ & \multirow{2}{*}{LW04} & \multirow{2}{*}{LW17} \\ 
&  & $(2\times5 \times2)$ &  &  &  &  & $(2\times5 \times2)$ &  &  \\ 
\cmidrule(lr){2-5} \cmidrule(lr){6-10} & \multicolumn{4}{c}{$\alpha^{2}=0.75$%
} &  & \multicolumn{4}{c}{$\alpha^{2}=1$} \\ 
MSE-1 & 0.396 & 0.195 & 0.154 & 0.167 &  & 0.353 & 0.243 & 0.173 & 0.184 \\ 
MSE-2 & 6.311 & 0.241 & 0.194 & 0.204 &  & 5.807 & 0.335 & 0.259 & 0.246 \\ 
PRIAL-1 & 0 & 0.469 & 0.589 & 0.557 &  & 0 & 0.218 & 0.469 & 0.440 \\ 
PRIAL-2 & 0 & 0.959 & 0.966 & 0.966 &  & 0 & 0.934 & 0.948 & 0.953 \\ 
size of LM & 0.004 & 0.058 & 0.017 & 0.013 &  & 0.004 & 0.067 & 0.017 & 0.013
\\ 
size of Wald & 0.690 & 0.091 & 0.087 & 0.093 &  & 0.690 & 0.106 & 0.090 & 
0.091 \\ 
\bottomrule &  &  &  &  &  &  &  &  & 
\end{tabular}%
\caption{{\protect\small $M_{T}$, $\tilde{\Sigma}$, LW04 and LW17 stand for
the sample covariance matrix, quadratic form estimator (factorisations given
in parentheses), \protect\cite{ledoitwolf2004}'s linear shrinkage estimator,
and \protect\cite{ledoitwolf2017}'s direct nonlinear shrinkage estimator,
respectively. MSE-1 and MSE-2 are the MSE in terms of $\Sigma$ and $%
\Sigma^{-1}$, respectively. PRIAL-1 and PRIAL-2 are the PRIAL in terms of $%
\Sigma$ and $\Sigma^{-1}$, respectively. $n=20,T=40$.}}
\label{table LW alpha2 vary}
\end{table}

Finally, we vary the ratio $n/T$. In the baseline setting we have $n/T=0.5$.
Here we consider two variations. The first variation is $n=16,T=50$ with a
ratio of $n/T=0.32$. The second variation is $n=40,T=20$ with a ratio of $%
n/T=2$. For the first variation, we identify the Kronecker product
factorizations: $2\times2\times2\times2$, $4\times4$, $4\times2\times2$ and $%
2\times8$. For the second variation, we use the Kronecker product
factorizations: $5\times2\times2\times2$, $5\times2\times4$, $5\times8$ and $%
10\times2\times 2$. We also considered permutations of sub-matrices for each
factorization, but the performances remained relatively unchanged, so we do
not report them in the interest of space. The results are reported in Table %
\ref{table LW n over T ratio vary}.

Consider the top panel of Table \ref{table LW n over T ratio vary} first.
All the candidate estimators beat the sample covariance matrix $M_{T}$.
Performance of the quadratic form estimator $\tilde{\Sigma}$ is relatively
robust to the Kronecker product factorizations ($2\times2\times2\times2$, $%
4\times4$ and $4\times2\times2$); the best performing one is $4\times
2\times2$. In terms of MSE-1, MSE-2, PRIAL-1 and PRIAL-2, the quadratic form
estimator $\tilde{\Sigma}$ $(4\times2\times2)$ is only slightly worse than
the LW04 and LW17 estimators. In terms of size of the LM test, $\tilde{%
\Sigma }$ $(4\times2\times2)$ has the correct size while both the LW04 and
LW17 estimators are under-sized. In terms of size of the Wald test, all the
candidate estimators are slightly over-sized.

Next consider the bottom panel of Table \ref{table LW n over T ratio vary}.
All the candidate estimators beat the sample covariance matrix $M_{T}$
again. The best performing quadratic form estimator has a factorization ($%
5\times2\times2\times2$). In terms of MSE-1, MSE-2 and PRIAL-1, $\tilde {%
\Sigma}$ $(5\times2\times2\times2)$ is comparable to the LW04 and LW17
estimators. In terms of size of the LM test, $\tilde{\Sigma}$ $%
(5\times2\times2\times2)$ and the LW04 estimator have correct size while the
LW17 estimator is slightly over-sized. In terms of size of the Wald test,
all the candidate estimators are slightly over-sized.

By looking at Tables \ref{table LW base} and \ref{table LW n over T ratio
vary} together, we observe that as $n/T$ increases, PRIAL-1 increases
monotonically for the best performing quadratic form estimator as well as
the LW04 and LW17 estimators. Such a pattern is consistent with \cite%
{ledoitwolf2004}. In terms of MSE-1 and MSE-2, performances of the best
performing quadratic form estimator as well as the LW04 and LW17 estimators
worsen as $n/T$ increases. In terms of size of the LM test, the best
performing quadratic form estimator always has the correct size, while sizes
of the Wald tests increase monotonically with $n/T$.

\begin{table}[ptb]
\centering
\begin{tabular}{lccccccc}
\toprule \multirow{2}{*}{$n/T=0.32$} & \multirow{2}{*}{$M_T$} & $\tilde {%
\Sigma}$ & $\tilde{\Sigma}$ & $\tilde{\Sigma}$ & $\tilde{\Sigma}$ & %
\multirow{2}{*}{LW04} & \multirow{2}{*}{LW17} \\ 
&  & $(2\times2\times2 \times2)$ & $( 4 \times4)$ & $(4\times2 \times2)$ & $%
(2\times8)$ &  &  \\ 
\cmidrule(lr){2-8} MSE-1 & 0.292 & 0.118 & 0.122 & 0.120 & 0.145 & 0.098 & 
0.109 \\ 
MSE-2 & 1.491 & 0.134 & 0.142 & 0.137 & 0.190 & 0.110 & 0.118 \\ 
PRIAL-1 & 0 & 0.580 & 0.571 & 0.576 & 0.492 & 0.655 & 0.618 \\ 
PRIAL-2 & 0 & 0.907 & 0.902 & 0.905 & 0.870 & 0.924 & 0.919 \\ 
size of LM & 0.013 & 0.057 & 0.050 & 0.050 & 0.041 & 0.023 & 0.019 \\ 
size of Wald & 0.373 & 0.081 & 0.090 & 0.080 & 0.133 & 0.072 & 0.074 \\ 
\midrule \multirow{2}{*}{$n/T=2$} & \multirow{2}{*}{$M_T$} & $\tilde{\Sigma}$
& $\tilde{\Sigma}$ & $\tilde{\Sigma}$ & $\tilde{\Sigma}$ & %
\multirow{2}{*}{LW04} & \multirow{2}{*}{LW17} \\ 
&  & $(5\times2\times2\times2)$ & $(5\times2\times4)$ & $(5\times8)$ & $%
(10\times2 \times2)$ &  &  \\ 
\cmidrule(lr){2-8} MSE-1 & 1.684 & 0.168 & 0.175 & 0.216 & 0.234 & 0.159 & 
0.196 \\ 
MSE-2 & NA & 0.182 & 0.194 & 0.286 & 0.337 & 0.151 & 0.164 \\ 
PRIAL-1 & 0 & 0.898 & 0.894 & 0.870 & 0.860 & 0.904 & 0.882 \\ 
PRIAL-2 & NA & NA & NA & NA & NA & NA & NA \\ 
size of LM & NA & 0.051 & 0.054 & 0.050 & 0.049 & 0.051 & 0.070 \\ 
size of Wald & NA & 0.155 & 0.159 & 0.224 & 0.260 & 0.129 & 0.140 \\ 
\bottomrule &  &  &  &  &  &  & 
\end{tabular}%
\caption{{\protect\small $M_{T}$, $\tilde{\Sigma}$, LW04 and LW17 stand for
the sample covariance matrix, quadratic form estimator (factorisations given
in parentheses), \protect\cite{ledoitwolf2004}'s linear shrinkage estimator,
and \protect\cite{ledoitwolf2017}'s direct nonlinear shrinkage estimator,
respectively. MSE-1 and MSE-2 are the MSE in terms of $\Sigma$ and $%
\Sigma^{-1}$, respectively. PRIAL-1 and PRIAL-2 are the PRIAL in terms of $%
\Sigma$ and $\Sigma^{-1}$, respectively. $\protect\alpha^2=0.5$.}}
\label{table LW n over T ratio vary}
\end{table}

\section{Concluding Remarks}

We have proposed a new estimator of the Kronecker product model for
covariance matrices - the quadratic form estimator. We establish the rate of
convergence and use the estimated precision matrix to form the LM and Wald
test statistics. The asymptotic distributions of these test statistics are
established under both null and local alternative hypotheses. Testing linear
restrictions of the unknown mean vector is also investigated. In Monte Carlo
simulations, the quadratic form estimator performs well both when the
Kronecker product model is correctly specified and when it is misspecified.

We remark on a number of possible extensions. One can generalize to allow
weakly time series dependent data (see \cite{hafnerlintontang2018} for some
work in this direction), and perhaps to where the spectral density matrix is
Kronecker product factored. We may also consider the two-sample case where $%
\Sigma _{1}:=\mathbb{E}[(y_{1,t}-\mu _{1})(y_{1,t}-\mu _{1})^{\intercal }]$ $%
(n\times n)$, $\Sigma _{2}:=\mathbb{E}[(y_{2,t}-\mu _{2})(y_{2,t}-\mu
_{2})^{\intercal }]$ $(n\times n)$, $\mu _{1}:=\mathbb{E}(y_{1,t})$, and $%
\mu _{2}:=\mathbb{E}(y_{2,t}).$ \cite{chophillips2018} showed that the
hypothesis of $\Sigma _{1}=\Sigma _{2}$ can be tested based on $\mathrm{tr}%
(\Sigma _{1}\Sigma _{2}^{-1})=n$; if both the covariance matrices have a
conformable Kronecker product structure, this simplifies to $\mathrm{tr}%
(\Sigma _{1,1}\Sigma _{2,1}^{-1})\times \cdots \times \mathrm{tr}(\Sigma
_{1,v}\Sigma _{2,v}^{-1})=n.$

\label{sec conclusion}

\appendix
%To indicate that following sections are to be numbered as appendices

\section{Appendix}

\subsection{Proof of Lemma \protect\ref{lemmaOmega_j}}

\begin{proof}
For part (i), since $\prod_{j=1}^{v}n_j=n$, we have $\del[1]{\min_{1\leq j\leq v}n_j}^v\leq n$. Thus \[v\leq \log n/\log\del[1]{\min_{1\leq j\leq v}n_j}=O(\log n).\]
For part (ii):
\[\max_{1\leq j\leq v}\lambda_{\max}(\Sigma_j)\leq \max_{1\leq j\leq v}\tr(\Sigma_j)=\max_{1\leq j\leq v}n_j<\infty.\]
\end{proof}

\subsection{Proof of Theorem \protect\ref{thm quadratic form final}}

We first give an auxiliary lemma and an auxiliary theorem leading to the
proof of Theorem \ref{thm quadratic form final}.

\subsubsection{Lemma \protect\ref{lemma F norm submatrices}}

\begin{lemma}
\label{lemma F norm submatrices} Suppose Assumptions \ref{assu normality}
and \ref{assuBasic} hold. Then we have

\begin{enumerate}[(i)]

\item Both $\max_{1\leq j\leq v}\|\Sigma_{j}\|_{F}$ and $\max_{1\leq j\leq v
}\|\Sigma_{j}^{-1}\|_{F}$ are bounded from above by absolute positive
constants. Moreover both $\min_{1\leq j\leq v}\|\Sigma_{j}\|_{F}$ and $%
\min_{1\leq j\leq v}\|\Sigma_{j}^{-1}\|_{F}$ are bounded away from zero by
absolute positive constants.

\item Both $\max_{1\leq j\leq v}\|\Sigma_{j}\|_{1}$ and $\max_{1\leq j\leq v
}\|\Sigma_{j}^{-1}\|_{1}$ are bounded from above by absolute positive
constants. Moreover both $\min_{1\leq j\leq v}\|\Sigma_{j}\|_{1}$ and $%
\min_{1\leq j\leq v}\|\Sigma_{j}^{-1}\|_{1}$ are bounded away from zero by
absolute positive constants.

\item Both $\max_{1\leq j\leq v}\|\Sigma_{j}\|_{\ell_2}$ and $\max_{1\leq
j\leq v }\|\Sigma_{j}^{-1}\|_{\ell_2}$ are bounded from above by absolute
positive constants. Moreover both $\min_{1\leq j\leq
v}\|\Sigma_{j}\|_{\ell_2}$ and $\min_{1\leq j\leq
v}\|\Sigma_{j}^{-1}\|_{\ell_2}$ are bounded away from zero by absolute
positive constants.
\end{enumerate}
\end{lemma}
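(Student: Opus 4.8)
The plan is to deduce all three parts from two elementary facts holding uniformly in $j$: the trace normalisation $\tr(\Sigma_j)=n_j$, and the two-sided eigenvalue bound
\begin{equation*}
0<c\leq \lambda_{\min}(\Sigma_j)\leq \lambda_{\max}(\Sigma_j)\leq C<\infty,\qquad j=1,\ldots,v,
\end{equation*}
whose lower half is Assumption \ref{assuBasic}(ii) and whose upper half is Lemma \ref{lemmaOmega_j}(ii), both with absolute constants. I shall also use that the factor dimensions are uniformly bounded, $2\leq n_j\leq \bar n$ for an absolute constant $\bar n$ (this is the force of Assumption \ref{assuBasic}(i) in case (a)); this is exactly what keeps every bound below an \emph{absolute} constant instead of one that deteriorates with $n_j$.

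Part (iii) is the most direct, since for symmetric positive definite $\Sigma_j$ one has $\|\Sigma_j\|_{\ell_2}=\lambda_{\max}(\Sigma_j)$ and $\|\Sigma_j^{-1}\|_{\ell_2}=1/\lambda_{\min}(\Sigma_j)$. The eigenvalue bound gives the upper bounds $C$ and $1/c$ at once, while the trace normalisation gives the lower bounds: the largest eigenvalue dominates the average, $\lambda_{\max}(\Sigma_j)\geq \tr(\Sigma_j)/n_j=1$, and similarly $\lambda_{\min}(\Sigma_j)\leq 1$, so $\|\Sigma_j\|_{\ell_2}\geq 1$ and $\|\Sigma_j^{-1}\|_{\ell_2}\geq 1$. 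Part (i) then follows from $\|\Sigma_j\|_F^2=\sum_{i=1}^{n_j}\lambda_i(\Sigma_j)^2$: the upper bound is $\|\Sigma_j\|_F^2\leq n_j\lambda_{\max}(\Sigma_j)^2\leq \bar n C^2$, and Cauchy--Schwarz gives the lower bound $\|\Sigma_j\|_F^2\geq(\tr\Sigma_j)^2/n_j=n_j\geq 2$. Applying the same two inequalities to $\Sigma_j^{-1}$, whose eigenvalues lie in $[C^{-1},c^{-1}]$, bounds $\|\Sigma_j^{-1}\|_F$ from above, and the universal inequality $\|\Sigma_j^{-1}\|_F\geq\|\Sigma_j^{-1}\|_{\ell_2}\geq 1$ from part (iii) bounds it from below.

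Part (ii) is the only one in which eigenvalues do not enter directly, so I argue entrywise. Each entry satisfies $|(\Sigma_j)_{k,l}|\leq\|\Sigma_j\|_{\ell_2}\leq C$, whence $\|\Sigma_j\|_1=\sum_{k,l}|(\Sigma_j)_{k,l}|\leq n_j^2 C\leq\bar n^2 C$, and identically $\|\Sigma_j^{-1}\|_1\leq n_j^2/c\leq\bar n^2/c$. For the lower bounds I keep only the diagonal, which is positive because the matrices are positive definite: $\|\Sigma_j\|_1\geq\sum_k(\Sigma_j)_{k,k}=\tr(\Sigma_j)=n_j\geq 2$, and $\|\Sigma_j^{-1}\|_1\geq\tr(\Sigma_j^{-1})=\sum_i\lambda_i(\Sigma_j)^{-1}\geq n_j/C\geq 2/C$.

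There is no genuine analytical obstacle here; the work is entirely bookkeeping, and the one point demanding care is to verify that every constant produced is \emph{absolute}, i.e.\ independent of $j,v,n,T$. This holds because each bound above is a fixed algebraic expression in the three absolute quantities $c$, $C$ and $\bar n$, and because the bounds are uniform in $j$; uniformity then upgrades the per-$j$ estimates to the claimed statements about $\max_{1\leq j\leq v}$ and $\min_{1\leq j\leq v}$.
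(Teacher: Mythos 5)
Your proof is correct, and it reaches the same conclusions by a genuinely different route for the lower-bound half. The paper proves (i) first via the chain $\lambda_{\min}(\Sigma_j)\leq\lambda_{\max}(\Sigma_j)\leq\|\Sigma_j\|_F\leq\sqrt{n_j}\,\lambda_{\max}(\Sigma_j)$ (and its analogue for $\Sigma_j^{-1}$), then transfers to (ii) by the norm equivalence $\|\Sigma_j\|_F\leq\|\Sigma_j\|_1\leq n_j\|\Sigma_j\|_F$, and finally to (iii) by $\|\cdot\|_{\ell_2}\leq\|\cdot\|_F$ together with direct eigenvalue bounds; in particular, all of the paper's lower bounds lean on Assumption \ref{assuBasic}(ii) (for $\|\Sigma_j\|$) and on Lemma \ref{lemmaOmega_j}(ii) (for $\|\Sigma_j^{-1}\|$). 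You instead prove (iii) first and extract every lower bound from the identification normalization $\tr(\Sigma_j)=n_j$: since the eigenvalues average to one, $\lambda_{\max}(\Sigma_j)\geq 1$ and $\lambda_{\min}(\Sigma_j)\leq 1$, which pins $\|\Sigma_j\|_{\ell_2}$, $\|\Sigma_j^{-1}\|_{\ell_2}$, and (via Cauchy--Schwarz on the trace, or by keeping the diagonal) the Frobenius and $\ell_1$ norms away from zero without ever invoking the eigenvalue assumptions; you also argue (ii) entrywise ($|(\Sigma_j)_{k,l}|\leq\|\Sigma_j\|_{\ell_2}$, diagonal $\geq\tr$) rather than through the Frobenius norm. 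Both arguments need Lemma \ref{lemmaOmega_j}(ii) and Assumption \ref{assuBasic}(ii) for the upper bounds on $\|\Sigma_j\|$ and $\|\Sigma_j^{-1}\|$ respectively, and both need boundedness of the $n_j$; but your version has the small additional payoff of showing that the lower-bound halves of all three parts are consequences of the trace normalization alone, i.e.\ they would survive even if the eigenvalue conditions were dropped.
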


\begin{proof}[Proof of Lemma \ref{lemma F norm submatrices}]
For part (i), note that
\[\lambda_{\min}(\Sigma_j)\leq \lambda_{\max}(\Sigma_j)\leq \|\Sigma_j\|_F\leq \sqrt{n_j}\lambda_{\max}(\Sigma_j)\]
whence we deduce that $\max_{1\leq j\leq v}\|\Sigma_j\|_F$ is bounded from above by an absolute positive constant and $\min_{1\leq j\leq v}\|\Sigma_j\|_F$ is bounded away from zero by an absolute positive constant via Assumption \ref{assuBasic} and Lemma \ref{lemmaOmega_j}. Similarly, we have
\[\frac{1}{\lambda_{\max}(\Sigma_j)}=\lambda_{\min}(\Sigma_j^{-1})\leq \lambda_{\max}(\Sigma_j^{-1})\leq \|\Sigma_j^{-1}\|_F\leq \sqrt{n_j}\lambda_{\max}(\Sigma_j^{-1})=\sqrt{n_j}\frac{1}{\lambda_{\min}(\Sigma_j)},\]
whence we deduce that $\max_{1\leq j\leq v}\|\Sigma_j^{-1}\|_F$ is bounded from above by an absolute positive constant and $\min_{1\leq j\leq v}\|\Sigma_j^{-1}\|_F$ is bounded away from zero by an absolute positive constant via Assumption \ref{assuBasic} and Lemma \ref{lemmaOmega_j}. 

For part (ii), note that
\begin{align*}
\|\Sigma_j\|_F&\leq \|\Sigma_j\|_1\leq n_j \|\Sigma_j\|_F\\
\|\Sigma_j^{-1}\|_F&\leq \|\Sigma_j^{-1}\|_1\leq n_j \|\Sigma_j^{-1}\|_F
\end{align*}
whence we deduce that part (ii) holds via part (i).

For part (iii), we have
\begin{align*}
\max_{1\leq j\leq v}\|\Sigma_{j}\|_{\ell_2}&\leq \max_{1\leq j\leq v}\|\Sigma_{j}\|_{F}\\
\max_{1\leq j\leq v}\|\Sigma_{j}^{-1}\|_{\ell_2}&\leq \max_{1\leq j\leq v}\|\Sigma_{j}^{-1}\|_{F}
\end{align*}
whence we could deduce the first half of the statement via part (i). Next,
\begin{align*}
\min_{1\leq j\leq v}\|\Sigma_j\|_{\ell_2}=\min_{1\leq j\leq v}\lambda_{\max}(\Sigma_j)\geq \min_{1\leq j\leq v}\lambda_{\min}(\Sigma_j)
\end{align*}
which is bounded away from zero by an absolute positive constant via Assumption \ref{assuBasic}(ii). Finally,
\begin{align*}
\min_{1\leq j\leq v}\|\Sigma_{j}^{-1}\|_{\ell_2}=\min_{1\leq j\leq v}\lambda_{\max}(\Sigma_{j}^{-1})\geq \min_{1\leq j\leq v}\lambda_{\min}(\Sigma_{j}^{-1})=\min_{1\leq j\leq v}\frac{1}{\lambda_{\max}(\Sigma_j)}=\frac{1}{\max_{1\leq j\leq v}\lambda_{\max}(\Sigma_j)}
\end{align*}
which is bounded away from zero by an absolute positive constant via Lemma \ref{lemmaOmega_j}(ii).
\end{proof}

\subsubsection{Theorem \protect\ref{thm quadratic form}}

\begin{thm}
\label{thm quadratic form} Suppose Assumptions \ref{assu normality}, \ref%
{assuBasic} and \ref{assu summability} hold. Then

\begin{enumerate}[(i)]

\item 
\begin{equation*}
\max_{1\leq h\leq v}\max_{1\leq i,j\leq n_{h}}\frac{1}{n_{-h}}%
\envert[1]{\hat{d}_{i,j}^{(h)}-d_{i,j}^{(h)}}=O_{p}\del[3]{\sqrt{\frac{\log
n}{n^{2-\beta_1}T}}}+O_p\del[3]{\frac{\log n}{T}}.
\end{equation*}

\item We have $\tr(d^{(h)})/(n_{h}n_{-h})=\sigma^{2}>0$ for $h=1,\ldots,v$.
Also, 
\begin{equation*}
\max_{1\leq h\leq v}\frac{1}{n_{h}n_{-h}}\envert[1]{\tr(\hat{d}^{(h)})-%
\tr(d^{(h)})}=O_{p}\del[3]{\sqrt{\frac{\log n}{n^{2-\beta_1}T}}}+O_p%
\del[3]{\frac{\log n}{T}}.
\end{equation*}
As a result, $\min_{1\leq h\leq v}\tr(\hat{d}^{(h)})/(n_{h}n_{-h})$ is
bounded away from zero by an absolute positive constant in probability.

\item 
\begin{equation*}
\max_{1\leq h\leq v}\|\tilde{\Sigma}_{h}-\Sigma_{h}\|_{\infty}=\max_{1\leq
h\leq v}\max_{1\leq i,j\leq n_{h}}\envert[2]{[\tilde{\Sigma}_h]_{i,j}-[%
\Sigma_h]_{i,j}}=O_{p}\del[3]{\sqrt{\frac{\log n}{n^{2-\beta_1}T}}}+O_p%
\del[3]{\frac{\log n}{T}},
\end{equation*}
where $[\tilde{\Sigma}_h]_{i,j}$ and $[\Sigma_h]_{i,j}$ are the $(i,j)$th
entry of $\tilde{\Sigma}_h$ and $\Sigma_h$, respectively.

\item 
\begin{equation*}
\envert[1]{\hat{\sigma}^2-\sigma^2}=O_{p}\del[3]{\sqrt{\frac{1}{n^{2-%
\beta_1}T}}}+O_p\del[3]{\frac{\log n}{T}}.
\end{equation*}

\item 
\begin{equation*}
\max_{1\leq h\leq v}\|\tilde{\Sigma}_{h}-\Sigma_{h}\|_{F}=O_{p}%
\del[3]{\sqrt{\frac{\log n}{n^{2-\beta_1}T}}}+O_p\del[3]{\frac{\log n}{T}}.
\end{equation*}

\item 
\begin{equation*}
\max_{1\leq h\leq v}\|\tilde{\Sigma}_{h}^{-1}-\Sigma_{h}^{-1}\|_{F}=O_{p}%
\del[3]{\sqrt{\frac{\log n}{n^{2-\beta_1}T}}}+O_p\del[3]{\frac{\log n}{T}}.
\end{equation*}

\item 
\begin{equation*}
\max_{1\leq h\leq v}\|\tilde{\Sigma}_{h}-\Sigma_{h}\|_{1}=O_{p}%
\del[3]{\sqrt{\frac{\log n}{n^{2-\beta_1}T}}}+O_p\del[3]{\frac{\log n}{T}}.
\end{equation*}

\item 
\begin{equation*}
\max_{1\leq h\leq v}\|\tilde{\Sigma}_{h}^{-1}-\Sigma_{h}^{-1}\|_{1}=O_{p}%
\del[3]{\sqrt{\frac{\log n}{n^{2-\beta_1}T}}}+O_p\del[3]{\frac{\log n}{T}}.
\end{equation*}

\item 
\begin{equation*}
\max_{1\leq h\leq v}\|\tilde{\Sigma}_{h}-\Sigma_{h}\|_{\ell_2}=O_{p}%
\del[3]{\sqrt{\frac{\log n}{n^{2-\beta_1}T}}}+O_p\del[3]{\frac{\log n}{T}}.
\end{equation*}

\item 
\begin{equation*}
\max_{1\leq h\leq v}\|\tilde{\Sigma}_{h}^{-1}-\Sigma_{h}^{-1}\|_{%
\ell_2}=O_{p}\del[3]{\sqrt{\frac{\log n}{n^{2-\beta_1}T}}}+O_p%
\del[3]{\frac{\log n}{T}}.
\end{equation*}
\end{enumerate}
\end{thm}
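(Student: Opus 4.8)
The plan is to treat part (i) as the analytic heart of the theorem and to deduce parts (ii)--(x) from it, exploiting throughout that each $n_h$ is a fixed bounded integer (Assumption \ref{assuBasic}(i)), so that on the $n_h\times n_h$ matrices $\tilde\Sigma_h,\Sigma_h$ the Frobenius, $\ell_1$, spectral and max-element norms are all equivalent up to absolute constants. For part (i), I would first unwind the partial trace: by the quadratic-form representation of $\mathrm{PTR}_{n_h}$ recorded in Section \ref{sec model}, each entry $\hat d^{(h)}_{i,j}$ is a sum of $n_{-h}$ entries of the permuted sample covariance matrix $M_T^{(h)}$, hence a sum of $n_{-h}$ sample covariances $T^{-1}\sum_t(y_{t,a}-\bar y_a)(y_{t,b}-\bar y_b)$ over an index set $\{(a_\ell,b_\ell)\}_{\ell=1}^{n_{-h}}$ determined by $(h,i,j)$, while $d^{(h)}_{i,j}=\sum_\ell\Sigma_{a_\ell,b_\ell}$. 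Using $M_T=M_T^0-(\bar y-\mu)(\bar y-\mu)^\intercal$, I would split the scaled error $n_{-h}^{-1}(\hat d^{(h)}_{i,j}-d^{(h)}_{i,j})$ into an \emph{oracle} term built from $M_T^0$ and a \emph{mean-correction} term built from $(\bar y-\mu)(\bar y-\mu)^\intercal$. The correction term is bounded crudely by $\max_{1\le a\le n}|\bar y_a-\mu_a|^2$, and a Bernstein maximal inequality under Assumption \ref{assu normality}(ii) gives $\max_a|\bar y_a-\mu_a|=O_p(\sqrt{\log n/T})$, producing exactly the $O_p(\log n/T)$ summand.

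The oracle term is where the real work lies, and I expect it to be the main obstacle. Writing it as $T^{-1}\sum_t\xi_t$ with $\xi_t:=n_{-h}^{-1}\sum_\ell[(y_{t,a_\ell}-\mu)(y_{t,b_\ell}-\mu)-\Sigma_{a_\ell,b_\ell}]$, the summands are i.i.d.\ mean-zero across $t$ by Assumption \ref{assu normality}(i). The crux is the variance bound $\var(\xi_t)=O(n^{-(2-\beta_1)})$: expanding the variance gives a double sum over $\ell,\ell'$ of fourth-order terms, which by the kurtosis control of Assumption \ref{assu normality}(iii) are dominated by their Gaussian (Isserlis) counterparts $\Sigma_{a_\ell,a_{\ell'}}\Sigma_{b_\ell,b_{\ell'}}+\Sigma_{a_\ell,b_{\ell'}}\Sigma_{b_\ell,a_{\ell'}}$; bounding these via Cauchy--Schwarz against $n_{-h}^{-2}\asymp n^{-2}$ and invoking $\|\Sigma\|_F^2=O(n^{\beta_1})$ of Assumption \ref{assu summability} yields $O(n^{\beta_1}/n^2)$. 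With this variance in hand I would apply Bernstein's inequality to $T^{-1}\sum_t\xi_t$ and take a union bound over the triples $(h,i,j)$; since there are only $\sum_{h}n_h^2=O(v)=O(\log n)$ of them by Lemma \ref{lemmaOmega_j}(i), the Gaussian part contributes $\sqrt{\log n/(n^{2-\beta_1}T)}$ and the Bernstein correction a further $O_p(\log n/T)$, giving part (i).

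Parts (ii)--(iv) then follow quickly. For (ii), since $d^{(h)}_{i,i}=\sigma^2 n_{-h}[\Sigma_h]_{i,i}$ gives $\tr(d^{(h)})=\sigma^2 n_h n_{-h}$, dividing $\tr(\hat d^{(h)})-\tr(d^{(h)})=\sum_{i=1}^{n_h}(\hat d^{(h)}_{i,i}-d^{(h)}_{i,i})$ by $n_h n_{-h}$ and applying (i) to the $n_h$ diagonal entries gives the rate; as $\sigma^2>0$ is fixed and the error is $o_p(1)$, the normalizer is bounded away from zero in probability. Part (iii) is a ratio argument: writing $[\tilde\Sigma_h]_{i,j}=(\hat d^{(h)}_{i,j}/n_{-h})/(\tr(\hat d^{(h)})/(n_h n_{-h}))$ and its population analogue with denominator $\sigma^2$, I would combine the numerator bound (i), the denominator bound (ii), and the boundedness of $[\Sigma_h]_{i,j}$ (Lemma \ref{lemma F norm submatrices}) in a first-order expansion to propagate the same rate. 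Part (iv) I would handle separately and directly: $\hat\sigma^2-\sigma^2=n^{-1}\sum_{i=1}^n(M_{T,ii}-\Sigma_{ii})$ is a single average over all $n$ coordinates, so a Chebyshev bound using the same variance computation (now with the full $n^{-2}$ scaling against $\|\Sigma\|_F^2=O(n^{\beta_1})$) gives the leading $O_p(\sqrt{1/(n^{2-\beta_1}T)})$ \emph{without} a $\log n$ factor, precisely because no maximum over indices is taken; the mean-correction again yields $O_p(\log n/T)$.

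Finally, parts (v)--(x) are norm conversions. Because $n_h$ is bounded, $\|\tilde\Sigma_h-\Sigma_h\|_F\le n_h\|\tilde\Sigma_h-\Sigma_h\|_\infty$ and $\|\cdot\|_1\le n_h^2\|\cdot\|_\infty$, so (v) and (vii) follow from (iii) with the rate unchanged, and (ix) follows from $\|\cdot\|_{\ell_2}\le\|\cdot\|_F$. For the inverse statements (vi), (viii), (x), I would use the perturbation identity $\tilde\Sigma_h^{-1}-\Sigma_h^{-1}=-\Sigma_h^{-1}(\tilde\Sigma_h-\Sigma_h)\tilde\Sigma_h^{-1}$ together with submultiplicativity; here $\|\Sigma_h^{-1}\|_{\ell_2}$ is bounded by Assumption \ref{assuBasic}(ii), while $\|\tilde\Sigma_h^{-1}\|_{\ell_2}$ is bounded in probability since (v) and Weyl's inequality keep $\lambda_{\min}(\tilde\Sigma_h)$ bounded away from zero with probability approaching one, so the same rate transfers to each inverse norm.
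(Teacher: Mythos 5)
Your overall architecture matches the paper's: split $\hat d^{(h)}_{i,j}$ via $M_T=M_T^0-(\bar y-\mu)(\bar y-\mu)^{\intercal}$ into an oracle term and a mean-correction term, bound the correction by $\bigl(\max_{1\leq i\leq n}|\bar y_i-\mu_i|\bigr)^2=O_p(\log n/T)$ via a Bernstein maximal bound, bound the variance of the oracle term by $O(n^{\beta_1-2}/T)$ using the kurtosis-domination assumption, the Isserlis/Kronecker factorization and Assumption \ref{assu summability}, take a union bound over the $O(\log n)$ triples $(h,i,j)$, and then deduce (ii)--(x) by ratio arguments, norm equivalence on fixed-size blocks, and a perturbation bound for the inverses (your identity $\tilde\Sigma_h^{-1}-\Sigma_h^{-1}=-\Sigma_h^{-1}(\tilde\Sigma_h-\Sigma_h)\tilde\Sigma_h^{-1}$ plus Weyl is in substance the paper's Lemma \ref{lemma saikkonen lemma}).

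The genuine gap is the concentration step for the oracle term. You propose to apply Bernstein's inequality to $T^{-1}\sum_t\xi_t$ ``with this variance in hand,'' i.e.\ with variance proxy $\sigma_0^2\asymp n^{\beta_1-2}$. But Bernstein requires the full scale of moment conditions $\frac{1}{T}\sum_t\mathbb{E}|\xi_t|^m\leq\frac{m!}{2}A^{m-2}\sigma_0^2$ for all $m\geq 2$, tied to that same $\sigma_0^2$. The assumptions give no such control: Assumption \ref{assu normality}(iii) dominates only the fourth-order (kurtosis) structure, hence only $\var(\xi_t)$; the higher moments of the cross-sectional average $\xi_t$ involve sixth-, eighth-, \ldots order cross-sectional dependence, which nothing in Assumption \ref{assu normality} or \ref{assu summability} restricts at the $n^{\beta_1-2}$ scale. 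The only Bernstein condition you can actually verify (from Assumption \ref{assu normality}(ii)) has $\sigma_0^2=O(1)$, which yields the rate $\sqrt{\log n/T}$ and destroys precisely the $n^{-(2-\beta_1)}$ gain that is the point of the theorem. The fix --- and what the paper does --- is to use Chebyshev's inequality, which needs only the variance, and to note that this crude bound suffices because the union is over only $\sum_h n_h^2=O(\log n)$ indices: the probability bound becomes $O(\log n)\cdot\var\bigl(T^{-1}\sum_t\xi_t\bigr)/M^2$ at threshold $M\sqrt{\log n\cdot n^{\beta_1-2}/T}$, so the $\sqrt{\log n}$ in the rate exactly absorbs the cardinality of the union. (Relatedly, your attribution of ``a further $O_p(\log n/T)$'' to a Bernstein correction in the oracle term is spurious: in the correct argument that summand in part (i) comes only from the mean-correction term.) With Bernstein replaced by Chebyshev in this one step, the rest of your proposal goes through essentially as in the paper.
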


\begin{proof}
For part (i), note that $d_{i,j}^{(h)}=\tr\Sigma^{(h)}_{\{[i,j]\}}$, where $\Sigma^{(h)}_{\{[i,j]\}}$ is the $[i,j]$th block of $\Sigma^{(h)}$ (each block is $n_{-h}\times n_{-h}$ dimensional) for $i,j=1,\ldots,n_h$. Similarly, $\hat{d}_{i,j}^{(h)}=\tr M^{(h)}_{T,\{[i,j]\}}$, where $M^{(h)}_{T,\{[i,j]\}}$ is the $[i,j]$th block of $M_T^{(h)}$ (each block is $n_{-h}\times n_{-h}$ dimensional). Write
\begin{align*}
\hat{d}_{i,j}^{(h)}=\tr M^{(h)}_{T,\{[i,j]\}}&=\tr M^{0,(h)}_{T,\{[i,j]\}}-\tr \del[2]{\sbr[1]{(\bar{y}-\mu)(\bar{y}-\mu)^{\intercal}}^{(h)}}_{\{[i,j]\}}=:\hat{d}_{i,j}^{0, (h)}-\tr \del[2]{\sbr[1]{(\bar{y}-\mu)(\bar{y}-\mu)^{\intercal}}^{(h)}}_{\{[i,j]\}}
\end{align*}
where 
\begin{align*}
M_T^0&:=\frac{1}{T}\sum_{t=1}^{T}(y_t-\mu)(y_t-\mu)^{\intercal}\\
M_T^{0,(h)} &:= K_{n_{h}\times \cdots \times n_{v},n_{1}\times \cdots
\times n_{h-1}}M_T^0 K_{n_{1}\times \cdots \times n_{h-1},n_{h}\times
\cdots \times n_{v}}\\
\sbr[1]{(\bar{y}-\mu)(\bar{y}-\mu)^{\intercal}}^{(h)}&:=K_{n_{h}\times \cdots \times n_{v},n_{1}\times \cdots
\times n_{h-1}}(\bar{y}-\mu)(\bar{y}-\mu)^{\intercal} K_{n_{1}\times \cdots \times n_{h-1},n_{h}\times
\cdots \times n_{v}},
\end{align*}
$M^{0,(h)}_{T,\{[i,j]\}}$ is the $[i,j]$th block of $M_T^{0,(h)}$ (each block is $n_{-h}\times n_{-h}$ dimensional), and $\del[1]{\sbr[1]{(\bar{y}-\mu)(\bar{y}-\mu)^{\intercal}}^{(h)}}_{\{[i,j]\}}$ is the $[i,j]$th block of $\sbr[1]{(\bar{y}-\mu)(\bar{y}-\mu)^{\intercal}}^{(h)}$ (each block is $n_{-h}\times n_{-h}$ dimensional). Thus we have
\begin{align}
&\max_{1\leq h\leq v}\max_{1\leq i,j\leq n_{h}}\frac{1}{n_{-h}}\envert[1]{\hat{d}_{i,j}^{(h)}-d_{i,j}^{(h)}}\notag\\
&\leq \max_{1\leq h\leq v}\max_{1\leq i,j\leq n_{h}}\frac{1}{n_{-h}}\envert[1]{\hat{d}_{i,j}^{0,(h)}-d_{i,j}^{(h)}}+\max_{1\leq h\leq v}\max_{1\leq i,j\leq n_{h}}\frac{1}{n_{-h}}\envert[3]{\tr \del[2]{\sbr[1]{(\bar{y}-\mu)(\bar{y}-\mu)^{\intercal}}^{(h)}}_{\{[i,j]\}}}.\label{align random28}
\end{align}

We consider the first term of (\ref{align random28}) first. Note that $\mathbb{E}[\hat{d}_{i,j}^{0,(h)}]=d_{i,j}^{(h)}$. Write for some $M>0$
\begin{align*}
&\mathbb{P}\del[3]{\max_{1\leq h\leq v}\max_{1\leq i,j\leq n_h}\sqrt{\frac{n^{2-\beta_1}T}{\log n}}\frac{1}{n_{-h}}|\hat{d}_{i,j}^{0,(h)}-d_{i,j}^{(h)}|>M}=\mathbb{P}\del[3]{\bigcup_{1\leq h\leq v}\bigcup_{1\leq i,j\leq n_h}\cbr[3]{\sqrt{\frac{n^{2-\beta_1}T}{\log n}}\frac{1}{n_{-h}}|\hat{d}_{i,j}^{0,(h)}-d_{i,j}^{(h)}|>M}}\\
&\leq \sum_{h=1}^{v}\sum_{i=1}^{n_h}\sum_{j=1}^{n_h}\mathbb{P}\del[3]{\sqrt{\frac{n^{2-\beta_1}T}{\log n}}\frac{1}{n_{-h}}|\hat{d}_{i,j}^{0,(h)}-d_{i,j}^{(h)}|>M}\leq \frac{n^{2-\beta_1}T\sum_{h=1}^{v}\sum_{i=1}^{n_h}\sum_{j=1}^{n_h}\var (\hat{d}_{i,j}^{0,(h)}/n_{-h})}{\log n \cdot M^2}\\
&\leq \frac{v\max_{1\leq h\leq v}n_h^2n^{2-\beta_1}T\max_{1\leq h\leq v}\max_{1\leq i,j\leq n_h}\var (\hat{d}_{i,j}^{0,(h)}/n_{-h})}{\log n\cdot M^2}
\end{align*}
where the second inequality is due to Chebyshev's inequality. We now show that 
\[\max_{1\leq h\leq v}\max_{1\leq i,j\leq n_h}\var (\hat{d}_{i,j}^{0,(h)}/n_{-h})=O\del[3]{\frac{1}{n^{2-\beta_1}T}}.\]
For arbitrary $i,j=1,\ldots, n_h$,
\begin{align}
&\var (\hat{d}_{i,j}^{0,(h)}/n_{-h})=\frac{1}{n_{-h}^2}\var \del[3]{\sum_{k=1}^{n_{-h}}\sbr[1]{M^{0,(h)}_{T,\{[i,j]\}}}_{kk}}=\frac{1}{n_{-h}^2}\var \del[3]{\frac{1}{T}\sum_{t=1}^{T}\sum_{k=1}^{n_{-h}}\dot{y}_{t,(i-1)n_{-h}+k}^{(h)} \dot{y}_{t,(j-1)n_{-h}+k}^{(h)}}\notag\\
&=\frac{1}{n_{-h}^2T}\sum_{k=1}^{n_{-h}}\sum_{\ell=1}^{n_{-h}}\cov \del[1]{\dot{y}_{t,(i-1)n_{-h}+k}^{(h)} \dot{y}_{t,(j-1)n_{-h}+k}^{(h)},\dot{y}_{t,(i-1)n_{-h}+\ell}^{(h)} \dot{y}_{t,(j-1)n_{-h}+\ell}^{(h)}}\notag\\
&\leq \frac{C}{n_{-h}^2T}\sum_{k=1}^{n_{-h}}\sum_{\ell=1}^{n_{-h}}\cov \del[1]{\dot{z}_{t,(i-1)n_{-h}+k}^{(h)} \dot{z}_{t,(j-1)n_{-h}+k}^{(h)},\dot{z}_{t,(i-1)n_{-h}+\ell}^{(h)} \dot{z}_{t,(j-1)n_{-h}+\ell}^{(h)}},\label{align school bus}
\end{align}
where $\dot{y}_t^{(h)}:=K_{n_h\times \cdots\times n_v, n_1\times \cdots \times n_{h-1}}(y_t-\mu)$ such that $\mathbb{E}[\dot{y}_t^{(h)}\dot{y}_t^{(h)\intercal}]=\Sigma^{(h)}$ and $\dot{z}_t^{(h)}$ is to be interpreted similarly, the third equality is due to independence over $t$ of $y_t$ in Assumption \ref{assu normality}(i), and the first inequality is due to Assumption \ref{assu normality}(iii).  Using Lemma 9 of \cite{magnusneudecker1986}, we have
\[\var \del[1]{\ve (\dot{z}_t^{(h)}\dot{z}_t^{(h)\intercal})}=\var \del[1]{\dot{z}_t^{(h)}\otimes \dot{z}_t^{(h)}}=2D_nD_n^+(\Sigma^{(h)}\otimes \Sigma^{(h)})=\del[1]{I_{n^2}+K_{n,n}}(\Sigma^{(h)}\otimes \Sigma^{(h)}),\]
where the last equality is due to (33) of \cite{magnusneudecker1986}. Thus we recognise that the summand on the right side of (\ref{align school bus}) is some element of $\del[1]{I_{n^2}+K_{n,n}}(\Sigma^{(h)}\otimes \Sigma^{(h)})$. We need to determine the exact position of the summand on the right side of (\ref{align school bus}) in $\del[1]{I_{n^2}+K_{n,n}}(\Sigma^{(h)}\otimes \Sigma^{(h)})$. We consider $\Sigma^{(h)}\otimes \Sigma^{(h)}$ and $K_{n,n}(\Sigma^{(h)}\otimes \Sigma^{(h)})$ separately.

Consider $\Sigma^{(h)}\otimes \Sigma^{(h)}$ first. We now introduce a new way to locate an element in a matrix. Divide the $n^2\times n^2$ matrix $\Sigma^{(h)}\otimes \Sigma^{(h)}$ into $n\times n$ blocks of matrices, each of which is $n\times n$ dimensional. Then $(\Sigma^{(h)}\otimes \Sigma^{(h)})_{\{[x,w],[p,q]\}}$ refers the $[p,q]$th element of the $[x,w]$th block matrix of $\Sigma^{(h)}\otimes \Sigma^{(h)}$, where $x,w,p,q=1,\ldots, n$. It is not difficult to see that
\[\cov \del[1]{\dot{z}_{t,(i-1)n_{-h}+k}^{(h)} \dot{z}_{t,(j-1)n_{-h}+k}^{(h)},\dot{z}_{t,(i-1)n_{-h}+\ell}^{(h)} \dot{z}_{t,(j-1)n_{-h}+\ell}^{(h)}}\]
corresponds to
\begin{equation}
\label{eqn random1}
(\Sigma^{(h)}\otimes \Sigma^{(h)})_{\cbr[1]{\sbr[1]{(i-1)n_{-h}+k,(i-1)n_{-h}+\ell},\sbr[1]{(j-1)n_{-h}+k,(j-1)n_{-h}+\ell}}}.
\end{equation}

We now consider $K_{n,n}(\Sigma^{(h)}\otimes \Sigma^{(h)})$. It is important to recognise that $K_{n,n}$ is a permutation matrix. Left multiplication of $\Sigma^{(h)}\otimes \Sigma^{(h)}$ by $K_{n,n}$ permutes the rows of $\Sigma^{(h)}\otimes \Sigma^{(h)}$. Since $K_{n,n}$ is $n\times n$, we can also divide $K_{n,n}$ into $n\times n$ blocks of matrices, each of which is $n\times n$ dimensional. Since $K_{n,n}$ is also a permutation matrix, its elements can only be either 0 or 1. It is not difficult to see that the $[q,p]$th element of the $[p,q]$th block matrix of $K_{n,n}$ is 1 for $p,q=1,\ldots, n$; all other elements of  $K_{n,n}$ are 0. Switch back to the traditional way to locate an element in a matrix. For $p,q=1,\ldots, n$, $[K_{n,n}]_{(p-1)n+q, (q-1)n+p}=1$. This implies that the $((p-1)n+q)$th row of $K_{n,n}(\Sigma^{(h)}\otimes \Sigma^{(h)})$ is actually the $((q-1)n+p)$th row of $\Sigma^{(h)}\otimes \Sigma^{(h)}$. Switch back to the new way to locate an element in a matrix. This says that, for arbitrary $x,w=1,\ldots,n$, the $[q,x]$th element of the $[p,w]$th block matrix of $K_{n,n}(\Sigma^{(h)}\otimes \Sigma^{(h)})$ is the $[p,x]$th element of the $[q,w]$th block matrix of $\Sigma^{(h)}\otimes \Sigma^{(h)}$. Thus
\[\cov \del[1]{\dot{z}_{t,(i-1)n_{-h}+k}^{(h)} \dot{z}_{t,(j-1)n_{-h}+k}^{(h)},\dot{z}_{t,(i-1)n_{-h}+\ell}^{(h)} \dot{z}_{t,(j-1)n_{-h}+\ell}^{(h)}}\]
corresponds to
\begin{align}
&[K_{n,n}(\Sigma^{(h)}\otimes \Sigma^{(h)})]_{\cbr[1]{\sbr[1]{(i-1)n_{-h}+k,(i-1)n_{-h}+\ell},\sbr[1]{(j-1)n_{-h}+k,(j-1)n_{-h}+\ell}}}\notag\\
&=(\Sigma^{(h)}\otimes \Sigma^{(h)})_{\cbr[1]{\sbr[1]{(j-1)n_{-h}+k,(i-1)n_{-h}+\ell},\sbr[1]{(i-1)n_{-h}+k,(j-1)n_{-h}+\ell}}}.\label{align random5}
\end{align}
Using (\ref{eqn random1}) and (\ref{align random5}), we have
\begin{align*}
&\max_{1\leq h\leq v}\max_{1\leq i,j\leq n_h}\var (\hat{d}^{0,(h)}_{i,j}/n_{-h})\\
&=\max_{1\leq h\leq v}\max_{1\leq i,j\leq n_h}\frac{1}{n_{-h}^2T}\sum_{k=1}^{n_{-h}}\sum_{\ell=1}^{n_{-h}}\cov \del[1]{\dot{y}_{t,(i-1)n_{-h}+k}^{(h)} \dot{y}_{t,(j-1)n_{-h}+k}^{(h)},\dot{y}_{t,(i-1)n_{-h}+\ell}^{(h)} \dot{y}_{t,(j-1)n_{-h}+\ell}^{(h)}}\\
&\leq \max_{1\leq h\leq v}\max_{1\leq i,j\leq n_h}\frac{C}{n_{-h}^2T}\sum_{k=1}^{n_{-h}}\sum_{\ell=1}^{n_{-h}}\cov \del[1]{\dot{z}_{t,(i-1)n_{-h}+k}^{(h)} \dot{z}_{t,(j-1)n_{-h}+k}^{(h)},\dot{z}_{t,(i-1)n_{-h}+\ell}^{(h)} \dot{z}_{t,(j-1)n_{-h}+\ell}^{(h)}}\\
&=\max_{1\leq h\leq v}\max_{1\leq i,j\leq n_h}\frac{C}{n_{-h}^2T}\sum_{k=1}^{n_{-h}}\sum_{\ell=1}^{n_{-h}}(\Sigma^{(h)}\otimes \Sigma^{(h)})_{\cbr[1]{\sbr[1]{(i-1)n_{-h}+k,(i-1)n_{-h}+\ell},\sbr[1]{(j-1)n_{-h}+k,(j-1)n_{-h}+\ell}}}\\
&\qquad +\max_{1\leq h\leq v}\max_{1\leq i,j\leq n_h}\frac{C}{n_{-h}^2T}\sum_{k=1}^{n_{-h}}\sum_{\ell=1}^{n_{-h}}(\Sigma^{(h)}\otimes \Sigma^{(h)})_{\cbr[1]{\sbr[1]{(j-1)n_{-h}+k,(i-1)n_{-h}+\ell},\sbr[1]{(i-1)n_{-h}+k,(j-1)n_{-h}+\ell}}}\\
&=\max_{1\leq h\leq v}\max_{1\leq i,j\leq n_h}\frac{C}{n_{-h}^2T}\sum_{k=1}^{n_{-h}}\sum_{\ell=1}^{n_{-h}}\del[1]{\Sigma^{(h)}_{(i-1)n_{-h}+k,(i-1)n_{-h}+\ell}\cdot\Sigma^{(h)}_{(j-1)n_{-h}+k,(j-1)n_{-h}+\ell}}\\
&\qquad+\max_{1\leq h\leq v} \max_{1\leq i,j\leq n_h}\frac{C}{n_{-h}^2T}\sum_{k=1}^{n_{-h}}\sum_{\ell=1}^{n_{-h}}\del[1]{\Sigma^{(h)}_{(j-1)n_{-h}+k,(i-1)n_{-h}+\ell}\cdot \Sigma^{(h)}_{(i-1)n_{-h}+k,(j-1)n_{-h}+\ell}}\\
&=\max_{1\leq h\leq v}\max_{1\leq i,j\leq n_h}\frac{C\sigma^4}{n_{-h}^2T}\sum_{k=1}^{n_{-h}}\sum_{\ell=1}^{n_{-h}}\sbr[2]{[\Sigma_h]_{i,i}\cdot [\Sigma_{-h}]_{k,\ell}\cdot [\Sigma_h]_{j,j}\cdot [\Sigma_{-h}]_{k,\ell}+[\Sigma_h]_{j,i}\cdot [\Sigma_{-h}]_{k,\ell}\cdot [\Sigma_h]_{i,j}\cdot [\Sigma_{-h}]_{k,\ell}}\\
&=\max_{1\leq h\leq v}\max_{1\leq i,j\leq n_h}\del[1]{[\Sigma_h]_{i,i}\cdot [\Sigma_h]_{j,j}+[\Sigma_h]_{i,j}\cdot [\Sigma_h]_{j,i}}\frac{C\sigma^4}{n_{-h}^2T}\sum_{k=1}^{n_{-h}}\sum_{\ell=1}^{n_{-h}}[\Sigma_{-h}]_{k,\ell}^2\\
&=\max_{1\leq h\leq v}\max_{1\leq i,j\leq n_h}\del[1]{[\Sigma_h]_{i,i}\cdot [\Sigma_h]_{j,j}+[\Sigma_h]_{i,j}\cdot [\Sigma_h]_{j,i}}\frac{C\sigma^4}{n_{-h}^2T}\|\Sigma_{-h}\|_F^2\leq  \max_{1\leq h\leq v}\max_{1\leq i,j\leq n_h}\frac{2C[\Sigma_h]_{i,i} [\Sigma_h]_{j,j}\sigma^4}{n_{-h}^2T}\|\Sigma_{-h}\|_F^2\\
&= \max_{1\leq h\leq v}\frac{O(1)\sigma^4}{n_{-h}^2T}\|\Sigma_{-h}\|_F^2= \max_{1\leq h\leq v}\frac{O(1)n_h^2\sigma^4}{n^2T\|\Sigma_{h}\|_F^2}\del[3]{\|\Sigma_{-h}\|_F^2\|\Sigma_{h}\|_F^2}= O\del[3]{\frac{1}{n^{2-\beta_1}T}} \max_{1\leq h\leq v}\del[3]{\frac{\sigma^4}{n^{\beta_1}}\|\Sigma_{-h}\|_F^2\|\Sigma_{h}\|_F^2}\\
&=O\del[3]{\frac{1}{n^{2-\beta_1}T}},
\end{align*}
where the second inequality is due to Cauchy-Schwarz inequality $[\Sigma_h]_{i,j}\leq \sqrt{[\Sigma_h]_{i,i}}\sqrt{[\Sigma_h]_{j,j}}$ using the fact that $\Sigma_h$ is a covariance matrix, the fourth last equality uses the fact that $\max_{1\leq h\leq v}\max_{1\leq i\leq n_{h}}[\Sigma_h]_{i,i}\leq \max_{1\leq h\leq v}\lambda_{\max}(\Sigma_h)<\infty$, the second last equality is due to Lemma \ref{lemma F norm submatrices}, and the last equality is due to Assumption \ref{assu summability}. We hence have
\begin{equation}
\label{eqn random3}
\max_{1\leq h\leq v}\max_{1\leq i,j\leq n_{h}}\frac{1}{n_{-h}}\envert[1]{\hat{d}_{i,j}^{0,(h)}-d_{i,j}^{(h)}}=O_{p}\del[3]{\sqrt{\frac{\log
n}{n^{2-\beta_1}T}}}.
\end{equation}

We now consider the second term of (\ref{align random28}).
\begin{align}
&\max_{1\leq h\leq v}\max_{1\leq i,j\leq n_{h}}\frac{1}{n_{-h}}\envert[3]{\tr \del[2]{\sbr[1]{(\bar{y}-\mu)(\bar{y}-\mu)^{\intercal}}^{(h)}}_{\{[i,j]\}}}=\max_{1\leq h\leq v}\max_{1\leq i,j\leq n_{h}}\frac{1}{n_{-h}}\envert[3]{\sum_{k=1}^{n_{-h}}\sbr[3]{ \del[2]{\sbr[1]{(\bar{y}-\mu)(\bar{y}-\mu)^{\intercal}}^{(h)}}_{\{[i,j]\}}}_{kk}}\notag\\
&\leq \max_{1\leq h\leq v}\max_{1\leq i,j\leq n_{h}}\max_{1\leq k\leq n_{-h}}\envert[3]{\sbr[3]{ \del[2]{\sbr[1]{(\bar{y}-\mu)(\bar{y}-\mu)^{\intercal}}^{(h)}}_{\{[i,j]\}}}_{kk}}\leq \max_{1\leq h\leq v}\enVert[2]{\sbr[1]{(\bar{y}-\mu)(\bar{y}-\mu)^{\intercal}}^{(h)}}_{\infty}\notag\\
&=\enVert[2]{(\bar{y}-\mu)(\bar{y}-\mu)^{\intercal}}_{\infty}=\sbr[3]{\max_{1\leq i\leq n}\envert[3]{\frac{1}{T}\sum_{t=1}^{T}(y_{t,i}-\mathbb{E}y_{t,i})}}^2=O_p\del[3]{\frac{\log n}{T}}\label{align random29}
\end{align}
where the last equality is due to Lemma \ref{lemma rate for mu hat} in Section \ref{sec AppendixB}. Inserting (\ref{eqn random3}) and (\ref{align random29}) into (\ref{align random28}) delivers part (i).

\bigskip

For part (ii), note that for $h=1,\ldots,v$
\begin{align*}
&\tr(d^{(h)})/(n_hn_{-h})=\frac{1}{n_{-h}}\sigma^2\tr(\Sigma_{-h})=\sigma^2 \frac{1}{n_{-h}}\tr(\Sigma_{h+1})\times \cdots\times \tr(\Sigma_v)\times \tr(\Sigma_1)\times \cdots\times \tr(\Sigma_{h-1})\\
&=\sigma^2>0.
\end{align*}
Now write
\begin{align*}
&\max_{1\leq h\leq v}\frac{1}{n_hn_{-h}}\envert[1]{\tr(\hat{d}^{(h)})-\tr(d^{(h)})}=\max_{1\leq h\leq v}\frac{1}{n_hn_{-h}}\envert[3]{\sum_{i=1}^{n_h}(\hat{d}^{(h)}_{i,i}-d^{(h)}_{i,i})}\leq\max_{1\leq h\leq v} \frac{1}{n_hn_{-h}}\sum_{i=1}^{n_h}\envert[1]{\hat{d}^{(h)}_{i,i}-d^{(h)}_{i,i}}\\
&\leq\max_{1\leq h\leq v} \max_{1\leq i\leq n_h}\frac{1}{n_{-h}}\envert[1]{\hat{d}^{(h)}_{i,i}-d^{(h)}_{i,i}}=O_p\del[3]{\sqrt{\frac{\log n}{n^{2-\beta_1}T}}}+O_p\del[3]{\frac{\log n}{T}},
\end{align*}
where the last equality is due to part (i). The last part of part (ii) also follows.

\bigskip

For part (iii), write
\begin{align}
\max_{1\leq h\leq v}\max_{1\leq i,j\leq n_h}\envert[1]{[\tilde{\Sigma}_h]_{i,j}-[\Sigma_h]_{i,j}}&\leq \max_{1\leq h\leq v}\max_{1\leq i,j\leq n_h}\envert[3]{\frac{\hat{d}^{(h)}_{i,j}}{\tr(\hat{d}^{(h)})/n_h}-\frac{d^{(h)}_{i,j}}{\tr(\hat{d}^{(h)})/n_h}}\notag\\
&\qquad +\max_{1\leq h\leq v}\max_{1\leq i,j\leq n_h}\envert[3]{\frac{d_{i,j}^{(h)}}{\tr(\hat{d}^{(h)})/n_h}-\frac{d_{i,j}^{(h)}}{\tr(d^{(h)})/n_h}}.\label{align random6}
\end{align}
Consider the first term on the right side of (\ref{align random6}).
\begin{align*}
&\max_{1\leq h\leq v}\max_{1\leq i,j\leq n_h}\envert[3]{\frac{\hat{d}^{(h)}_{i,j}}{\tr(\hat{d}^{(h)})/n_h}-\frac{d^{(h)}_{i,j}}{\tr(\hat{d}^{(h)})/n_h}}=\max_{1\leq h\leq v}\max_{1\leq i,j\leq n_h}\frac{1}{\tr(\hat{d}^{(h)})/(n_hn_{-h})}\frac{1}{n_{-h}}\envert[1]{\hat{d}^{(h)}_{i,j}-d^{(h)}_{i,j}}\\
&=O_p(1)\max_{1\leq h\leq v}\max_{1\leq i,j\leq n_h}\frac{1}{n_{-h}}\envert[1]{\hat{d}^{(h)}_{i,j}-d^{(h)}_{i,j}}=O_p\del[3]{\sqrt{\frac{\log n}{n^{2-\beta_1}T}}}+O_p\del[3]{\frac{\log n}{T}},
\end{align*}
where the second equality is due to part (ii) and the last equality is due to part (i). Consider the second term on the right side of (\ref{align random6}).
\begin{align*}
&\max_{1\leq h\leq v}\max_{1\leq i,j\leq n_h}\envert[3]{\frac{d_{i,j}^{(h)}}{\tr(\hat{d}^{(h)})/n_h}-\frac{d^{(h)}_{i,j}}{\tr(d^{(h)})/n_h}}=\max_{1\leq h\leq v}\max_{1\leq i,j\leq n_h}\frac{\envert[2]{\frac{\tr(d^{(h)})}{n_hn_{-h}}-\frac{\tr(\hat{d}^{(h)})}{n_hn_{-h}}}}{\envert[2]{\frac{\tr(\hat{d}^{(h)})}{n_hn_{-h}}\cdot\frac{\tr(d^{(h)})}{n_hn_{-h}}}}\frac{1}{n_{-h}}\envert[1]{d^{(h)}_{i,j}}\\
&=\sbr[3]{O_p\del[3]{\sqrt{\frac{\log n}{n^{2-\beta_1}T}}}+O_p\del[3]{\frac{\log n}{T}}}\max_{1\leq h\leq v}\max_{1\leq i,j\leq n_h}\frac{|d^{(h)}_{i,j}|}{n_{-h}}\\
&=\sbr[3]{O_p\del[3]{\sqrt{\frac{\log n}{n^{2-\beta_1}T}}}+O_p\del[3]{\frac{\log n}{T}}}\max_{1\leq h\leq v}\max_{1\leq i,j\leq n_h}\frac{\sigma^2|[\Sigma_h]_{i,j}|\tr(\Sigma_{-h})}{n_{-h}}\\
&=\sbr[3]{O_p\del[3]{\sqrt{\frac{\log n}{n^{2-\beta_1}T}}}+O_p\del[3]{\frac{\log n}{T}}}\max_{1\leq h\leq v}\max_{1\leq i,j\leq n_h}\sigma^2|[\Sigma_h]_{i,j}|\\
&=\sbr[3]{O_p\del[3]{\sqrt{\frac{\log n}{n^{2-\beta_1}T}}}+O_p\del[3]{\frac{\log n}{T}}}\sigma^2\max_{1\leq h\leq v}\lambda_{\max}(\Sigma_h)\\
&=O_p\del[3]{\sqrt{\frac{\log n}{n^{2-\beta_1}T}}}+O_p\del[3]{\frac{\log n}{T}},
\end{align*}
where the second equality is due to part (ii), and the last equality is due to Lemma \ref{lemmaOmega_j}(ii). Part (iii) hence follows.

\bigskip

For part (iv), write
\begin{align}
&\envert[1]{\hat{\sigma}^2-\sigma^2}=\envert[3]{\frac{1}{n}\tr (M_T-\Sigma)}=\envert[3]{\frac{1}{n}\tr \del[2]{M_T^0-(\bar{y}-\mu)(\bar{y}-\mu)^{\intercal}-\Sigma}}\notag\\
&=\envert[3]{\frac{1}{n}\sum_{i=1}^{n}\del[2]{ M_{T,i,i}^0-\sbr[1]{(\bar{y}-\mu)(\bar{y}-\mu)^{\intercal}}_{i,i}-\Sigma_{i,i}}}\notag\\
&\leq \envert[3]{\frac{1}{nT}\sum_{i=1}^{n}\sum_{t=1}^{T}\sbr[1]{(y_{t,i}-\mu_i)^2-\mathbb{E}(y_{t,i}-\mu_i)^2}}+\sbr[3]{\max_{1\leq i\leq n}\envert[3]{\frac{1}{T}\sum_{t=1}^{T}(y_{t,i}-\mu_{i})}}^2\notag\\
&=\envert[3]{\frac{1}{nT}\sum_{i=1}^{n}\sum_{t=1}^{T}\sbr[1]{\dot{y}_{t,i}^2-\mathbb{E}\dot{y}_{t,i}^2}}+O_p\del[3]{\frac{\log n}{T}}\label{align random30}
\end{align}
where $\dot{y}_{t,i}:=y_{t,i}-\mu_i$, and the last equality is due to Lemma \ref{lemma rate for mu hat} in Section \ref{sec AppendixB}. We now establish a rate for the first term in (\ref{align random30}). For some $M>0$,
\begin{align*}
\mathbb{P}\del[3]{\envert[3]{\frac{\sqrt{n^{2-\beta_1}T}}{nT}\sum_{i=1}^{n}\sum_{t=1}^{T}(\dot{y}_{t,i}^2-\mathbb{E}\dot{y}_{t,i}^2)}>M}\leq \frac{n^{2-\beta_1}T\var \del[1]{\frac{1}{nT}\sum_{i=1}^{n}\sum_{t=1}^{T}\dot{y}_{t,i}^2}}{M^2}.
\end{align*}
We now show $\var \del[1]{\frac{1}{nT}\sum_{i=1}^{n}\sum_{t=1}^{T}y_{t,i}^2}=O(1/(n^{2-\beta_1}T))$. 
\begin{align*}
&\var\del[3]{\frac{1}{nT}\sum_{i=1}^{n}\sum_{t=1}^{T}\dot{y}_{t,i}^2}=\frac{1}{T}\var\del[3]{\frac{1}{n}\sum_{i=1}^{n}\dot{y}_{t,i}^2}=\frac{1}{Tn^2}\sum_{i=1}^{n}\sum_{j=1}^{n}\cov\del[1]{\dot{y}_{t,i}\dot{y}_{t,i}, \dot{y}_{t,j}\dot{y}_{t,j}}\\
&\leq \frac{C}{Tn^2}\sum_{i=1}^{n}\sum_{j=1}^{n}\cov\del[1]{\dot{z}_{t,i}\dot{z}_{t,i}, \dot{z}_{t,j}\dot{z}_{t,j}}= \frac{C}{Tn^2}\sum_{i=1}^{n}\sum_{j=1}^{n}\del[2]{(\Sigma\otimes \Sigma)_{\{[i,j],[i,j]\}}+\del[1]{K_{n,n}(\Sigma\otimes \Sigma)}_{\{[i,j],[i,j]\}}}\\
&= \frac{2C}{Tn^2}\sum_{i=1}^{n}\sum_{j=1}^{n}(\Sigma\otimes \Sigma)_{\{[i,j],[i,j]\}}= \frac{2C}{Tn^2}\sum_{i=1}^{n}\sum_{j=1}^{n}\Sigma_{i,j}\cdot \Sigma_{i,j}=\frac{2C}{Tn^{2-\beta_1}}\frac{1}{n^{\beta_1}}\|\Sigma\|_F^2=O\del[3]{\frac{1}{Tn^{2-\beta_1}}}
\end{align*}
where the first equality is due to independence over $t$ of Assumption \ref{assu normality}(i), the first inequality is due to Assumption \ref{assu normality}(iii), the third and fourth equalities are due to the similar arguments which we used to prove part (i), and the last equality is due to Assumption \ref{assu summability}. Thus we have
\[\envert[3]{\frac{1}{nT}\sum_{i=1}^{n}\sum_{t=1}^{T}(\dot{y}_{t,i}^2-\mathbb{E}\dot{y}_{t,i}^2)}=O_p\del[3]{\sqrt{\frac{1}{n^{2-\beta_1}T}}}.\]
Substituting this into (\ref{align random30}) delivers part (iv).

\bigskip

For part (v), we have
\begin{align*}
&\max_{1\leq h\leq v}\|\tilde{\Sigma}_{h}-\Sigma_{h}\|_{F}\leq \max_{1\leq h\leq v}n_h\|\tilde{\Sigma}_{h}-\Sigma_{h}\|_{\infty}=O_{p}\del[3]{\sqrt{\frac{\log n}{n^{2-\beta_1}T}}}+O_p\del[3]{\frac{\log n}{T}}
\end{align*}
where the last equality is due to part (iii). For part (vi), invoke Lemma \ref{lemma saikkonen lemma} and use that $\max_{1\leq h\leq v}\|\Sigma_{h}^{-1}\|_F=O(1)$ in Lemma \ref{lemma F norm submatrices}.

\bigskip

For part (vii), we have
\[\max_{1\leq h\leq v}\|\tilde{\Sigma}_h-\Sigma_h\|_1\leq \max_{1\leq h\leq v}n_h \|\tilde{\Sigma}_h-\Sigma_h\|_F=O_{p}\del[3]{\sqrt{\frac{\log n}{n^{2-\beta_1}T}}}+O_p\del[3]{\frac{\log n}{T}}.\]
For part (viii), we have
\[\max_{1\leq h\leq v}\|\tilde{\Sigma}_h^{-1}-\Sigma_h^{-1}\|_1\leq \max_{1\leq h\leq v}n_h \|\tilde{\Sigma}_h^{-1}-\Sigma_h^{-1}\|_F=O_{p}\del[3]{\sqrt{\frac{\log n}{n^{2-\beta_1}T}}}+O_p\del[3]{\frac{\log n}{T}}.\]

\bigskip

For part (ix), we have
\[\max_{1\leq h\leq v}\|\tilde{\Sigma}_h-\Sigma_h\|_{\ell_2}\leq \max_{1\leq h\leq v} \|\tilde{\Sigma}_h-\Sigma_h\|_F=O_{p}\del[3]{\sqrt{\frac{\log n}{n^{2-\beta_1}T}}}+O_p\del[3]{\frac{\log n}{T}}.\]
For part (x), we have
\[\max_{1\leq h\leq v}\|\tilde{\Sigma}_h^{-1}-\Sigma_h^{-1}\|_{\ell_2}\leq \max_{1\leq h\leq v} \|\tilde{\Sigma}_h^{-1}-\Sigma_h^{-1}\|_F=O_{p}\del[3]{\sqrt{\frac{\log n}{n^{2-\beta_1}T}}}+O_p\del[3]{\frac{\log n}{T}}.\]
\end{proof}

\subsubsection{Proof of Theorem \protect\ref{thm quadratic form final}}

\begin{proof}[Proof of Theorem \ref{thm quadratic form final}]
For part (i),
\begin{align}
&\enVert[1]{\tilde{\Sigma}-\Sigma}_F/\|\Sigma\|_F=\enVert[1]{\hat{\sigma}^2\times \tilde{\Sigma}_1\otimes \cdots\otimes \tilde{\Sigma}_v-\sigma^2 \times \Sigma_1\otimes \cdots\otimes \Sigma_v}_F/\|\Sigma\|_F=\notag\\
&\enVert[1]{\hat{\sigma}^2\times \tilde{\Sigma}_1\otimes \cdots\otimes \tilde{\Sigma}_v-\hat{\sigma}^2 \times \Sigma_1\otimes \cdots\otimes \Sigma_v+\hat{\sigma}^2 \times \Sigma_1\otimes \cdots\otimes \Sigma_v-\sigma^2 \times \Sigma_1\otimes \cdots\otimes \Sigma_v}_F/\|\Sigma\|_F\notag\\
&\leq \hat{\sigma}^2\enVert[1]{ \tilde{\Sigma}_1\otimes \cdots\otimes \tilde{\Sigma}_v-  \Sigma_1\otimes \cdots\otimes\Sigma_v}_F/\|\Sigma\|_F+|\hat{\sigma}^2-\sigma^2|\enVert[1]{  \Sigma_1\otimes \cdots\otimes \Sigma_v}_F/\|\Sigma\|_F. \label{align random2}
\end{align}
We consider the first term in (\ref{align random2}). By inserting terms like $\Sigma_1\otimes \tilde{\Sigma}_2\otimes \cdots\otimes \tilde{\Sigma}_v$ and the triangular inequality, we have
\begin{align}
&\enVert[1]{ \tilde{\Sigma}_1\otimes \cdots\otimes \tilde{\Sigma}_v-  \Sigma_1\otimes \cdots\otimes\Sigma_v}_F\leq \notag\\
& \enVert[1]{\tilde{\Sigma}_1-\Sigma_1}_F\prod_{\ell=2}^{v}\|\tilde{\Sigma}_{\ell}\|_F+\sum_{j=2}^{v-1}\del[3]{\sbr[3]{\prod_{k=1}^{j-1}\|\Sigma_{k}\|_F}\enVert[1]{\tilde{\Sigma}_j-\Sigma_j}_F\sbr[3]{\prod_{\ell=j+1}^{v}\|\tilde{\Sigma}_{\ell}\|_F}}+\sbr[3]{\prod_{k=1}^{v-1}\|\Sigma_{k}\|_F}\enVert[1]{\tilde{\Sigma}_v-\Sigma_v}_F.\label{align three terms}
\end{align}
We first divide the first term of (\ref{align three terms}) by $\prod_{\ell=1}^{v}\|\Sigma_{\ell}\|_F$. We have
\begin{align}
&\frac{\enVert[1]{\tilde{\Sigma}_1-\Sigma_1}_F\prod_{\ell=2}^{v}\|\tilde{\Sigma}_{\ell}\|_F}{\prod_{\ell=1}^{v}\|\Sigma_{\ell}\|_F}=\frac{\enVert[1]{\tilde{\Sigma}_1-\Sigma_1}_F}{\|\Sigma_{1}\|_F}\prod_{\ell=2}^{v}\frac{\|\tilde{\Sigma}_{\ell}\|_F}{\|\Sigma_{\ell}\|_F}\leq \frac{\enVert[1]{\tilde{\Sigma}_1-\Sigma_1}_F}{\|\Sigma_{1}\|_F}\prod_{\ell=2}^{v}\sbr[3]{1+\frac{\|\tilde{\Sigma}_{\ell}-\Sigma_{\ell}\|_F}{\|\Sigma_{\ell}\|_F}}\notag \\
&\leq \frac{\enVert[1]{\tilde{\Sigma}_1-\Sigma_1}_F}{\|\Sigma_{1}\|_F}\sbr[3]{1+\frac{\max_{1\leq k\leq v}\|\tilde{\Sigma}_{k}-\Sigma_{k}\|_F}{\min_{1\leq k\leq v}\|\Sigma_{k}\|_F}}^{v-1}.\label{align three terms 1}
\end{align}
We next divide the summand of the second term of (\ref{align three terms}) by $\prod_{\ell=1}^{v}\|\Sigma_{\ell}\|_F$. We have for $j=2,\ldots,v-1$
\begin{align}
&\frac{\sbr[1]{\prod_{k=1}^{j-1}\|\Sigma_{k}\|_F}\enVert[1]{\tilde{\Sigma}_j-\Sigma_j}_F\sbr[1]{\prod_{\ell=j+1}^{v}\|\tilde{\Sigma}_{\ell}\|_F}}{\prod_{\ell=1}^{v}\|\Sigma_{\ell}\|_F}=\frac{\enVert[1]{\tilde{\Sigma}_j-\Sigma_j}_F}{\|\Sigma_{j}\|_F}\prod_{\ell=j+1}^{v}\frac{\|\tilde{\Sigma}_{\ell}\|_F}{\|\Sigma_{\ell}\|_F}\notag\\
&\leq \frac{\enVert[1]{\tilde{\Sigma}_j-\Sigma_j}_F}{\|\Sigma_{j}\|_F}\prod_{\ell=j+1}^{v}\sbr[3]{1+\frac{\|\tilde{\Sigma}_{\ell}-\Sigma_{\ell}\|_F}{\|\Sigma_{\ell}\|_F}}\leq \frac{\enVert[1]{\tilde{\Sigma}_j-\Sigma_j}_F}{\|\Sigma_{j}\|_F}\sbr[3]{1+\frac{\max_{1\leq k\leq v}\|\tilde{\Sigma}_{k}-\Sigma_{k}\|_F}{\min_{1\leq k\leq v}\|\Sigma_{k}\|_F}}^{v-j}.\label{align three terms 2}
\end{align}
We finally divide the third term of (\ref{align three terms}) by $\prod_{\ell=1}^{v}\|\Sigma_{\ell}\|_F$. We have
\begin{align}
\frac{\sbr[1]{\prod_{k=1}^{v-1}\|\Sigma_{k}\|_F}\enVert[1]{\tilde{\Sigma}_v-\Sigma_v}_F}{\prod_{\ell=1}^{v}\|\Sigma_{\ell}\|_F}=\frac{\enVert[1]{\tilde{\Sigma}_v-\Sigma_v}_F}{\|\Sigma_{v}\|_F}.\label{align three terms 3}
\end{align}
Thus we have
\begin{align*}
&\hat{\sigma}^2\enVert[1]{ \tilde{\Sigma}_1\otimes \cdots\otimes \tilde{\Sigma}_v-  \Sigma_1\otimes \cdots\otimes\Sigma_v}_F/\|\Sigma\|_F\leq  \frac{\hat{\sigma}^2}{\sigma^2}\sum_{j=1}^{v}\frac{\enVert[1]{\tilde{\Sigma}_j-\Sigma_j}_F}{\|\Sigma_j\|_F}\del[3]{1+\frac{\max_{1\leq k\leq v}\|\tilde{\Sigma}_k-\Sigma_k\|_F}{\min_{1\leq k\leq v}\|\Sigma_k\|_F}}^{v-j} \\
&\leq \frac{\hat{\sigma}^2}{\sigma^2}\del[3]{1+\frac{\max_{1\leq k\leq v}\|\tilde{\Sigma}_k-\Sigma_k\|_F}{\min_{1\leq k\leq v}\|\Sigma_k\|_F}}^{v-1}\sum_{j=1}^{v}\frac{\enVert[1]{\tilde{\Sigma}_j-\Sigma_j}_F}{\|\Sigma_j\|_F}=\frac{\hat{\sigma}^2}{\sigma^2}O_p(1)\sum_{j=1}^{v}\frac{\enVert[1]{\tilde{\Sigma}_j-\Sigma_j}_F}{\|\Sigma_j\|_F}\\
&=O_p(1) \sum_{j=1}^{v}\enVert[1]{\tilde{\Sigma}_j-\Sigma_j}_F=vO_p\del[3]{\sqrt{\frac{\log n}{n^{2-\beta_1}T}}\vee \frac{\log n}{T}}=O_p\del[3]{\sqrt{\frac{\log^3 n}{n^{2-\beta_1}T}}}+O_p\del[3]{\frac{\log^2 n}{T}}
\end{align*}
where the first inequality is due to that $\|\Sigma\|_F=\sigma^2\prod_{j=1}^{v}\|\Sigma_j\|_F$ via Lemma \ref{lemma F norm Kronecker}, (\ref{align three terms 1}), (\ref{align three terms 2}) and (\ref{align three terms 3}),  the first equality is due to Lemma \ref{lemma F norm submatrices} and Theorem \ref{thm quadratic form}(v)\footnote{To see this:
\begin{align*}
\del[3]{1+\frac{\max_{1\leq k\leq v}\|\tilde{\Sigma}_k-\Sigma_k\|_F}{\min_{1\leq k\leq v}\|\Sigma_k\|_F}}^{v-1}=\del[3]{1+O_p\del[3]{\sqrt{\frac{\log n}{n^{2-\beta_1}T}}\vee \frac{\log n}{T}}}^{O(\log n)}=O_p(1)
\end{align*}
where the last equality could be deduced from the fact that $\lim_{x\to\infty}(1+1/x)^x=e$ and $\log^3n/T\to 0$.
}, the second equality is due to Lemma \ref{lemma F norm submatrices} and Theorem \ref{thm quadratic form}(iv), and the third equality is due to Theorem \ref{thm quadratic form}(v).

We now consider the second term in (\ref{align random2}).
\begin{align*}
|\hat{\sigma}^2-\sigma^2|\enVert[1]{  \Sigma_1\otimes \cdots\otimes \Sigma_v}_F/\|\Sigma\|_F=\frac{|\hat{\sigma}^2-\sigma^2|}{\sigma^2}=O_p\del[3]{\sqrt{\frac{1}{n^{2-\beta_1}T}}}+O_p\del[3]{\frac{\log n}{T}}
\end{align*}
where the last equality is due to Theorem \ref{thm quadratic form}(iv). Part (ii)-(vi) of the theorem could be established in a similar manner, so we omit the details.
\end{proof}

\subsection{Proof of Theorem \protect\ref{thm Wald statistics}}

We first give an auxiliary theorem leading to the proof of Theorem \ref{thm
Wald statistics}.

\subsubsection{Theorem \protect\ref{thm Keleijian}}

The following theorem is adapted from Theorem 1 of \cite{kelejianprucha2001}.

\begin{thm}
\label{thm Keleijian} Consider $\{\varepsilon_{T,i}: 1\leq i\leq n, n\geq
1,T\geq 1\}$, an array of real numbers $\{b_{T,i}: 1\leq i\leq n, n\geq
1,T\geq 1\}$ and $Q_{n,T}:=
\sum_{i=1}^{n}\varepsilon_{T,i}^2+\sum_{i=1}^{n}b_{T,i}\varepsilon_{T,i}$.
Suppose that

\begin{enumerate}[(i)]

\item $\mathbb{E}[\varepsilon_{T,i}]=0$ for $1\leq i\leq n, n\geq 1,T\geq 1 $%
. Furthermore, for each $n\geq 1,T\geq 1$, $\varepsilon_{T,1},\ldots,
\varepsilon_{T,n}$ are (mutually) independent.

\item 
\begin{align*}
\limsup_{T\to \infty} \sup_{n\geq 1}\sup_{1\leq i\leq n}\mathbb{E}%
\envert[1]{\varepsilon_{T,i}}^{4+2\delta}&<\infty \\
\limsup_{n,T\to \infty}\frac{1}{n}\sum_{i=1}^{n}|b_{T,i}|^{2+\delta}&<\infty
\end{align*}
for some $\delta >0$.

\item 
\begin{equation*}
\liminf_{n,T\to\infty}\frac{1}{n}\var (Q_{n,T})\geq C>0
\end{equation*}
for some absolute positive constant $C$.
\end{enumerate}

Then as $n,T\to \infty$, 
\begin{equation*}
\frac{Q_{n,T}-\mathbb{E}[Q_{n,T}]}{\sqrt{\var(Q_{n,T})}}\xrightarrow{d}%
N(0,1).
\end{equation*}
\end{thm}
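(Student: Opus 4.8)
The plan is to exploit the structural feature that distinguishes this statement from a generic quadratic-form CLT: the quadratic part of $Q_{n,T}$ is \emph{diagonal}, being $\sum_{i=1}^n \varepsilon_{T,i}^2$ with no cross terms $\varepsilon_{T,i}\varepsilon_{T,j}$. Consequently, after centering, $Q_{n,T}$ becomes a sum of \emph{independent} summands, and the problem reduces to a Lyapunov-type central limit theorem for a double-indexed array of independent random variables rather than the more delicate dependent-summand situation. Concretely, I would set
\[
u_{T,i}:=\del[1]{\varepsilon_{T,i}^2-\mathbb{E}\varepsilon_{T,i}^2}+b_{T,i}\varepsilon_{T,i},
\]
so that $Q_{n,T}-\mathbb{E}[Q_{n,T}]=\sum_{i=1}^n u_{T,i}$. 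By condition (i), for each fixed $n,T$ the $\{u_{T,i}\}_{i=1}^n$ are independent and mean zero, whence $\var(Q_{n,T})=\sum_{i=1}^n \var(u_{T,i})$.

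The core of the argument is then to verify a Lyapunov condition with exponent $\delta$. For the denominator, condition (iii) gives $\var(Q_{n,T})\geq Cn$ for all sufficiently large $n,T$, so $\del[1]{\var(Q_{n,T})}^{(2+\delta)/2}\gtrsim n^{(2+\delta)/2}$. For the numerator, the $c_r$-inequality gives
\[
\mathbb{E}\envert[1]{u_{T,i}}^{2+\delta}\lesssim \mathbb{E}\envert[1]{\varepsilon_{T,i}^2-\mathbb{E}\varepsilon_{T,i}^2}^{2+\delta}+\envert[1]{b_{T,i}}^{2+\delta}\mathbb{E}\envert[1]{\varepsilon_{T,i}}^{2+\delta}.
\]
The first term is bounded uniformly in $i$ because $\mathbb{E}|\varepsilon_{T,i}^2-\mathbb{E}\varepsilon_{T,i}^2|^{2+\delta}\lesssim \mathbb{E}|\varepsilon_{T,i}|^{4+2\delta}+(\mathbb{E}\varepsilon_{T,i}^2)^{2+\delta}$, and both pieces are controlled by the first line of condition (ii). For the second term, $\mathbb{E}|\varepsilon_{T,i}|^{2+\delta}\leq (\mathbb{E}|\varepsilon_{T,i}|^{4+2\delta})^{(2+\delta)/(4+2\delta)}$ is likewise uniformly bounded, so summing over $i$ and invoking the second line of condition (ii) (which states $\sum_{i=1}^n|b_{T,i}|^{2+\delta}=O(n)$) yields $\sum_{i=1}^n \mathbb{E}|u_{T,i}|^{2+\delta}=O(n)$. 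Combining, the Lyapunov ratio is
\[
\frac{\sum_{i=1}^n \mathbb{E}\envert[1]{u_{T,i}}^{2+\delta}}{\del[1]{\var(Q_{n,T})}^{(2+\delta)/2}}=O\del[3]{\frac{n}{n^{(2+\delta)/2}}}=O\del[1]{n^{-\delta/2}}\to 0,
\]
so the Lyapunov condition holds.

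With the Lyapunov condition in hand, I would conclude by applying the double-index central limit theorem of Section \ref{sec AppendixB} (Theorem \ref{thmdoubleindexCLT}) to $\sum_{i=1}^n u_{T,i}$, which delivers the stated convergence to $N(0,1)$ after self-normalization by $\sqrt{\var(Q_{n,T})}$. The main obstacle, and the reason one cannot simply cite a textbook Lyapunov CLT, is that $\{u_{T,i}\}$ is a genuine double-indexed array in $(n,T)$ passing to the joint limit, not an ordinary triangular array indexed by $n$ alone; this is precisely why condition (ii) is phrased with a $\limsup$ over $T$ together with a $\sup$ over $n$ and $i$, and why the argument must be routed through the double-index CLT. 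The remaining work is entirely bookkeeping to confirm that the moment bounds above are uniform in the triple $(i,n,T)$, which the two lines of condition (ii) are designed to guarantee.
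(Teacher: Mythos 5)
Your proposal is correct and follows essentially the same route as the paper's own proof: the identical decomposition $Q_{n,T}-\mathbb{E}[Q_{n,T}]=\sum_{i=1}^{n}\bigl[(\varepsilon_{T,i}^2-\mathbb{E}\varepsilon_{T,i}^2)+b_{T,i}\varepsilon_{T,i}\bigr]$ into independent mean-zero summands, the same Lyapunov bound via the $c_r$-inequality and the uniform moment conditions in (ii), and the same appeal to the double-index CLT (Theorem \ref{thmdoubleindexCLT}) with condition (iii) controlling the normalization. The only cosmetic difference is that you absorb the variance lower bound into $n^{(2+\delta)/2}$ immediately, whereas the paper keeps the factor $[n^{-1}\var(Q_{n,T})]^{1+\delta/2}$ explicit; the two are equivalent.
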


\begin{proof}
We can calculate that 
\begin{align*}
\mathbb{E}[Q_{n,T}]&=\mathbb{E}\sbr[3]{\sum_{i=1}^{n}\varepsilon_{T,i}^2+\sum_{i=1}^{n}b_{T,i}\varepsilon_{T,i}}=\sum_{i=1}^{n}\mathbb{E}\sbr[1]{\varepsilon_{T,i}^2}=:\sum_{i=1}^{n}\sigma_{T,i}^2\\
Q_{n,T}-\mathbb{E}[Q_{n,T}]&=\sum_{i=1}^{n}\del[1]{\varepsilon_{T,i}^2-\sigma_{T,i}^2+b_{T,i}\varepsilon_{T,i}}=:\sum_{i=1}^{n}Y_{T,i}\\
\mathbb{E}[Y_{T,i}^2]&=\mathbb{E}[\varepsilon_{T,i}^4]-\sigma_{T,i}^4+b_{T,i}^2\sigma_{T,i}^2+2b_{T,i}\mathbb{E}\varepsilon_{T,i}^3\\
\var(Q_{n,T})&=\mathbb{E}\sbr[3]{\sum_{i=1}^{n}Y_{T,i}}^2=\mathbb{E}\sbr[3]{\sum_{i=1}^{n}\sum_{j=1}^{n}Y_{T,i}Y_{T,j}}=\sum_{i=1}^{n}\mathbb{E}[Y_{T,i}^2],
\end{align*}
%Define $X_{n,T,i}:=Y_{n,T,i}/\sqrt{\var(Q_{n,T})}$. 
where the last equality is due to independence of $\varepsilon_{T,i}^2$ across $i$. We now show that
\begin{align*}
\frac{Q_{n,T}-\mathbb{E}[Q_{n,T}]}{\sqrt{\var(Q_{n,T})}}=\sum_{i=1}^{n}\frac{Y_{T,i}}{\sqrt{\var(Q_{n,T})}}\xrightarrow{d}N(0,1)
\end{align*}
as $n,T\to\infty$. This reduces to verifying the Lyapounov's condition in Theorem \ref{thmdoubleindexCLT} part (b); that is, for some $\delta>0$,
\[\lim_{n,T\to\infty}\sum_{i=1}^{n}\frac{1}{[\var(Q_{n,T})]^{1+\delta/2}}\mathbb{E}\envert[1]{Y_{T,i}}^{2+\delta}=0.\]
We first find an upper bound for $\mathbb{E}\envert[1]{Y_{T,i}}^{2+\delta}$. %We have for every $1\leq i\leq n$ and $n\geq 1$
\begin{align*}
&\mathbb{E}\envert[1]{Y_{T,i}}^{2+\delta}=\mathbb{E}\envert[1]{\varepsilon_{T,i}^2-\sigma_{T,i}^2+b_{T,i}\varepsilon_{T,i}}^{2+\delta}\leq 3^{1+\delta}\del[1]{\mathbb{E}|\varepsilon_{T,i}^2|^{2+\delta}+\mathbb{E}|\sigma_{T,i}^2|^{2+\delta}+|b_{T,i}|^{2+\delta}\mathbb{E}|\varepsilon_{T,i}|^{2+\delta}}\\
&=3^{1+\delta}\del[1]{\mathbb{E}|\varepsilon_{T,i}|^{4+2\delta}+\sigma_{T,i}^{4+2\delta}+|b_{T,i}|^{2+\delta}\mathbb{E}|\varepsilon_{T,i}|^{2+\delta}}\leq K_1+K_2|b_{T,i}|^{2+\delta},
\end{align*}
for absolute positive constants $K_1$ and $K_2$ for sufficiently large $T$, where the first inequality is due to Loeve's $c_r$ inequality, and the last inequality is due to assumption (ii) of the theorem. Then we have
\begin{align*}
\sum_{i=1}^{n}\frac{\mathbb{E}\envert[1]{Y_{T,i}}^{2+\delta}}{[\var(Q_{n,T})]^{1+\delta/2}}\leq \frac{nK_1+nK_2\del[1]{\frac{1}{n}\sum_{i=1}^{n}|b_{T,i}|^{2+\delta}}}{[n^{-1}\var(Q_{n,T})]^{1+\delta/2}n^{1+\delta/2}}=\frac{K_1+K_2\del[1]{\frac{1}{n}\sum_{i=1}^{n}|b_{T,i}|^{2+\delta}}}{[n^{-1}\var(Q_{n,T})]^{1+\delta/2}n^{\delta/2}}\to 0
\end{align*}
as $n,T\to \infty$, where the convergence to 0 relies on assumptions (ii) and (iii) of the theorem.
\end{proof}

\subsubsection{Proof of Theorem \protect\ref{thm Wald statistics}}

\begin{proof}[Proof of Theorem \ref{thm Wald statistics}]
Write
\begin{align*}
\frac{LM_{n,T}-n}{\sqrt{2n}}&=\frac{T(\bar{y}-\mu_0)^{\intercal}\tilde{\Sigma}_{\mu_0}^{-1}(\bar{y}-\mu_0)-n}{\sqrt{2n}}\\
&=\frac{T(\bar{y}-\mu_0)^{\intercal}\Sigma^{-1}(\bar{y}-\mu_0)-n}{\sqrt{2n}}+\frac{T(\bar{y}-\mu_0)^{\intercal}(\tilde{\Sigma}^{-1}_{\mu_0}-\Sigma^{-1})(\bar{y}-\mu_0)}{\sqrt{2n}}.
\end{align*}
We first show that under $H_0$ as $n,T\to \infty$,
\[
\frac{T(\bar{y}-\mu_0)^{\intercal}\Sigma^{-1}(\bar{y}-\mu_0)-n}{\sqrt{2n}}\xrightarrow{d}N(0,1).\]
Write
\begin{align*}
&\frac{T(\bar{y}-\mu_0)^{\intercal}\Sigma^{-1}(\bar{y}-\mu_0)-n}{\sqrt{2n}}=\frac{\sbr[1]{\frac{1}{\sqrt{T}}\sum_{t=1}^{T}(y_t-\mu_0)}^{\intercal}(L^{-1})^{\intercal}L^{-1}\sbr[1]{\frac{1}{\sqrt{T}}\sum_{t=1}^{T}(y_t-\mu_0)}-n}{\sqrt{2n}}\\
&=:\frac{\del[1]{\frac{1}{\sqrt{T}}\sum_{t=1}^{T}x_t}^{\intercal}\del[1]{\frac{1}{\sqrt{T}}\sum_{t=1}^{T}x_t}-n}{\sqrt{2n}}=:\frac{z_{T}^{\intercal}z_{T}-n}{\sqrt{2n}}=\frac{\sum_{i=1}^{n}z_{T,i}^2-n}{\sqrt{2n}}=:\frac{Q_{n,T}-n}{\sqrt{2n}}.
\end{align*}
Note that for each $n\geq 1, T\geq 1$, $z_{T,1},\ldots, z_{T,n}$ are (mutually) independent under assumption (b) of the theorem and Assumption \ref{assu normality}(i). Under $H_0$, 
\begin{align*}
\mathbb{E}[z_{T,i}]&=\mathbb{E}\sbr[3]{\frac{1}{\sqrt{T}}\sum_{t=1}^{T}x_{t,i}}=0\\
\var(z_{T})&=\var\del[3]{\frac{1}{\sqrt{T}}\sum_{t=1}^{T}x_{t}}=I_n\\
\mathbb{E}[Q_{n,T}]&=\mathbb{E}\sbr[3]{\sum_{i=1}^{n}z_{T,i}^2}=\sum_{i=1}^{n}\mathbb{E}\sbr[1]{z_{T,i}^2}=n\\
\var(Q_{n,T})&=\var\del[3]{\sum_{i=1}^{n}z_{T,i}^2}=\sum_{i=1}^{n}\var\del[1]{z_{T,i}^2}=\sum_{i=1}^{n}\sbr[2]{\mathbb{E}[z_{T,i}^4]-\del[1]{\mathbb{E}[z_{T,i}^2]}^2}\\
&=\sum_{i=1}^{n}\del[2]{\mathbb{E}[z_{T,i}^4]-1}=:\sum_{i=1}^{n}\del[1]{\gamma_{z,i}+2}
\end{align*}
where $\gamma_{z,i}$ is the excess kurtosis of $z_{T,i}$:
\[\gamma_{z,i}:=\frac{\mathbb{E}[z_{T,i}^4]}{[\var(z_{T,i})]^2}-3=\mathbb{E}[z_{T,i}^4]-3.\]

We next calculate $\mathbb{E}[z_{T,i}^4]$ in terms of moments of $x_{t,i}$.
\begin{align}
\mathbb{E}[z_{T,i}^4]=\mathbb{E}\sbr[3]{\del[3]{\frac{1}{\sqrt{T}}\sum_{t=1}^{T}x_{t,i}}^4}=\frac{1}{T^2}\sum_{t=1}^{T}\sum_{s=1}^{T}\sum_{k=1}^{T}\sum_{\ell=1}^{T}\mathbb{E}\sbr[1]{x_{t,i}x_{s,i}x_{k,i}x_{\ell,i}}.\label{align random13}
\end{align}
Note that the summand in (\ref{align random13}) is non-zero only if $t=s=k=\ell$, $t=s\neq k=\ell$, $t=k\neq s=\ell$, $t=\ell\neq k=s$. First, consider the case $t=s=k=\ell$. Collecting all the summands in (\ref{align random13}) satisfying this, we have
\begin{align}
\frac{1}{T^2}\sum_{t=1}^{T}\mathbb{E}\sbr[1]{x_{t,i}^4}=\frac{1}{T^2}\sum_{t=1}^{T}(\gamma_{x,t,i}+3)=\frac{1}{T^2}\sum_{t=1}^{T}\gamma_{x,t,i}+\frac{3}{T}\label{align random14}
\end{align}
where $\gamma_{x,t,i}$ is the excess kurtosis of $x_{t,i}$:
\[\gamma_{x,t,i}:=\frac{\mathbb{E}\sbr[1]{x_{t,i}^4}}{[\var(x_{t,i})]^2}-3=\mathbb{E}\sbr[1]{x_{t,i}^4}-3.\]
Second, consider the case $t=s\neq k=\ell$. Collecting all the summands in (\ref{align random13}) satisfying this, we have
\begin{align}
\frac{1}{T^2}\sum_{t=1}^{T}\sum_{\substack{k=1\\\neq t}}^{T}\mathbb{E}\sbr[1]{x_{t,i}^2x_{k,i}^2}=\frac{1}{T^2}\sum_{t=1}^{T}\sum_{\substack{k=1\\\neq t}}^{T}\mathbb{E}\sbr[1]{x_{t,i}^2}\mathbb{E}\sbr[1]{x_{k,i}^2}=\frac{T(T-1)}{T^2}=1-\frac{1}{T}.\label{align random15}
\end{align}
Likewise, for cases $t=k\neq s=\ell$ and $t=\ell\neq k=s$, both sums are $1-1/T$. Substituting (\ref{align random14}) and (\ref{align random15}) into (\ref{align random13}), we have
\begin{align*}
\mathbb{E}[z_{T,i}^4]=\frac{1}{T^2}\sum_{t=1}^{T}\gamma_{x,t,i}+\frac{3}{T}+3\del[3]{1-\frac{1}{T}}=\frac{1}{T^2}\sum_{t=1}^{T}\gamma_{x,t,i}+3
\end{align*}
whence we have $\gamma_{z,i}=\mathbb{E}[z_{T,i}^4]-3=\frac{1}{T^2}\sum_{t=1}^{T}\gamma_{x,t,i}$ and
\begin{equation}
\label{eqn random4}
\var(Q_{n,T})=\sum_{i=1}^{n}\del[1]{\gamma_{z,i}+2}=\sum_{i=1}^{n}\del[3]{\frac{1}{T^2}\sum_{t=1}^{T}\gamma_{x,t,i}+2}=2n\del[3]{1+\frac{1}{2T}\frac{1}{nT}\sum_{i=1}^{n}\sum_{t=1}^{T}\gamma_{x,t,i}}.
\end{equation}

It remains to verify condition (ii)-(iii) of Theorem \ref{thm Keleijian}. We have
\begin{align*}
\frac{1}{n}\var(Q_{n,T})=2+\frac{1}{T}\del[3]{\frac{1}{nT}\sum_{i=1}^{n}\sum_{t=1}^{T}\gamma_{x,t,i}}>0
\end{align*}
for large enough $T$ because $\gamma_{x,t,i}>-3$ for all $t$ and $i$ by definition of the excess kurtosis. Hence (iii) of Theorem \ref{thm Keleijian} is satisfied. Condition (ii) of Theorem \ref{thm Keleijian} is also satisfied: for some $\delta>0$
\[\limsup_{T\to \infty} \sup_{n\geq 1}\sup_{1\leq i\leq n}\mathbb{E}\envert[3]{\frac{1}{\sqrt{T}}\sum_{t=1}^{T}x_{t,i}}^{4+2\delta}<\infty\]
by Theorem \ref{thmBrillinger} in Section \ref{sec AppendixB} under assumption (b) of the theorem. Thus we have
\begin{align*}
\frac{T(\bar{y}-\mu_0)^{\intercal}\Sigma^{-1}(\bar{y}-\mu_0)-n}{\sqrt{2n}}&=\frac{Q_{n,T}-n}{\sqrt{2n}}=\frac{Q_{n,T}-n}{\sqrt{2n\del[1]{1+\frac{1}{2T}\frac{1}{nT}\sum_{i=1}^{n}\sum_{t=1}^{T}\gamma_{x,t,i}}}}\del[1]{1+o(1)}\xrightarrow{d}N(0,1),
\end{align*}
under $H_0$ as $n,T\to \infty$, where the second equality is due to 
\[\limsup_{n,T\to \infty}\frac{1}{nT}\sum_{i=1}^{n}\sum_{t=1}^{T}\mathbb{E}[x_{t,i}^4]\leq \limsup_{n,T\to \infty}\max_{1\leq i\leq n}\max_{1\leq t\leq T}\mathbb{E}[x_{t,i}^4]<\infty\]
under assumption (b) of the theorem, and the weak convergence is due to Theorem \ref{thm Keleijian}.

The theorem would follow if we show that
\[\frac{T(\bar{y}-\mu_0)^{\intercal}(\tilde{\Sigma}^{-1}_{\mu_0}-\Sigma^{-1})(\bar{y}-\mu_0)}{\sqrt{2n}}=o_p(1).\]
We now show this.
\begin{align*}
&\frac{T|(\bar{y}-\mu_0)^{\intercal}(\tilde{\Sigma}^{-1}_{\mu_0}-\Sigma^{-1})(y_t-\mu_0)|}{\sqrt{2n}}=\frac{\envert[1]{\del[1]{\frac{1}{\sqrt{T}}\sum_{t=1}^{T}(y_t-\mu_0)}^{\intercal}(\tilde{\Sigma}^{-1}_{\mu_0}-\Sigma^{-1})\del[1]{\frac{1}{\sqrt{T}}\sum_{t=1}^{T}(y_t-\mu_0)}}}{\sqrt{2n}}\\
&= \frac{1}{\sqrt{2n}}\envert[3]{\sum_{i=1}^{n}\sum_{j=1}^{n}\del[3]{\frac{1}{\sqrt{T}}\sum_{t=1}^{T}(y_{t,i}-\mu_{0,i})}\del[3]{\frac{1}{\sqrt{T}}\sum_{t=1}^{T}(y_{t,j}-\mu_{0,j})}(\tilde{\Sigma}_{\mu_0,i,j}^{-1}-\Sigma_{i,j}^{-1})}\\
&\leq  \frac{1}{\sqrt{2n}}\del[3]{\max_{1\leq i\leq n}\envert[3]{\frac{1}{\sqrt{T}}\sum_{t=1}^{T}(y_{t,i}-\mu_{0,i})}}^2\sum_{i=1}^{n}\sum_{j=1}^{n}|\tilde{\Sigma}_{\mu_0,i,j}^{-1}-\Sigma_{i,j}^{-1}|\\
&=\frac{1}{\sqrt{2n}}\del[3]{\max_{1\leq i\leq n}\envert[3]{\frac{1}{\sqrt{T}}\sum_{t=1}^{T}(y_{t,i}-\mu_{0,i})}}^2\|\tilde{\Sigma}^{-1}_{\mu_0}-\Sigma^{-1}\|_1\\
&=O_p\del[3]{\frac{\log n}{\sqrt{n}}}\|\Sigma^{-1}\|_1O_{p}\del[3]{\sqrt{\frac{\log^3 n}{n^{2-\beta_1}T}}}=\frac{1}{n^{\beta_2}}\|\Sigma^{-1}\|_1O_{p}\del[3]{\sqrt{\frac{n^{2\beta_2-1}\log^5 n}{n^{2-\beta_1}T}}}\\
&=O_{p}\del[3]{\sqrt{\frac{n^{2\beta_2+\beta_1-3}\log^5 n}{T}}}=o_p(1)
\end{align*}
where the fourth equality is due to Lemma \ref{lemma rate for mu hat}, the sixth equality is due to Assumption \ref{assu summability 2}, and the last equality is due to assumption (a) of the theorem.

\bigskip

For the Wald statistic, write
\begin{align*}
\frac{W_{n,T}-n}{\sqrt{2n}}&=\frac{T(\bar{y}-\mu_0)^{\intercal}\tilde{\Sigma}^{-1}(\bar{y}-\mu_0)-n}{\sqrt{2n}}\\
&=\frac{T(\bar{y}-\mu_0)^{\intercal}\Sigma^{-1}(\bar{y}-\mu_0)-n}{\sqrt{2n}}+\frac{T(\bar{y}-\mu_0)^{\intercal}(\tilde{\Sigma}^{-1}-\Sigma^{-1})(\bar{y}-\mu_0)}{\sqrt{2n}}.
\end{align*}
We have already shown in the proof of the LM test that under assumptions (a)-(b) of the theorem and under $H_0$ as $n,T\to \infty$,
\begin{equation*}
%\label{eqn random2}
\frac{T(\bar{y}-\mu_0)^{\intercal}\Sigma^{-1}(\bar{y}-\mu_0)-n}{\sqrt{2n}}\xrightarrow{d}N(0,1).
\end{equation*}

Display (\ref{eqn wald weak convergence to standard normal}) would follow if we show that
\[\frac{T(\bar{y}-\mu_0)^{\intercal}(\tilde{\Sigma}^{-1}-\Sigma^{-1})(\bar{y}-\mu_0)}{\sqrt{2n}}=o_p(1).\]
We now show this.
\begin{align*}
&\frac{T|(\bar{y}-\mu_0)^{\intercal}(\tilde{\Sigma}^{-1}-\Sigma^{-1})(y_t-\mu_0)|}{\sqrt{2n}}=\frac{\envert[1]{\del[1]{\frac{1}{\sqrt{T}}\sum_{t=1}^{T}(y_t-\mu_0)}^{\intercal}(\tilde{\Sigma}^{-1}-\Sigma^{-1})\del[1]{\frac{1}{\sqrt{T}}\sum_{t=1}^{T}(y_t-\mu_0)}}}{\sqrt{2n}}\\
&= \frac{1}{\sqrt{2n}}\envert[3]{\sum_{i=1}^{n}\sum_{j=1}^{n}\del[3]{\frac{1}{\sqrt{T}}\sum_{t=1}^{T}(y_{t,i}-\mu_{0,i})}\del[3]{\frac{1}{\sqrt{T}}\sum_{t=1}^{T}(y_{t,j}-\mu_{0,j})}(\tilde{\Sigma}_{i,j}^{-1}-\Sigma_{i,j}^{-1})}\\
&\leq  \frac{1}{\sqrt{2n}}\del[3]{\max_{1\leq i\leq n}\envert[3]{\frac{1}{\sqrt{T}}\sum_{t=1}^{T}(y_{t,i}-\mu_{0,i})}}^2\sum_{i=1}^{n}\sum_{j=1}^{n}|\tilde{\Sigma}_{i,j}^{-1}-\Sigma_{i,j}^{-1}|\\
&=\frac{1}{\sqrt{2n}}\del[3]{\max_{1\leq i\leq n}\envert[3]{\frac{1}{\sqrt{T}}\sum_{t=1}^{T}(y_{t,i}-\mu_{0,i})}}^2\|\tilde{\Sigma}^{-1}-\Sigma^{-1}\|_1\\
&=O_p\del[3]{\frac{\log n}{\sqrt{n}}}\|\Sigma^{-1}\|_1\sbr[3]{O_{p}\del[3]{\sqrt{\frac{\log^3 n}{n^{2-\beta_1}T}}}+O_p\del[3]{\frac{\log^2n}{T}}}\\
&=\frac{1}{n^{\beta_2}}\|\Sigma^{-1}\|_1\sbr[3]{O_{p}\del[3]{\sqrt{\frac{n^{2\beta_2-1}\log^5 n}{n^{2-\beta_1}T}}}+O_p\del[3]{\frac{n^{\beta_2-\frac{1}{2}}\log^3n}{T}}}\\
&=O_{p}\del[3]{\sqrt{\frac{n^{2\beta_2+\beta_1-3}\log^5 n}{T}}}+O_p\del[3]{\frac{n^{\beta_2-\frac{1}{2}}\log^3n}{T}}=o_p(1)
\end{align*}
where the fourth equality is due to Lemma \ref{lemma rate for mu hat} and Theorem \ref{thm quadratic form final}(iv), and the sixth equality is due to Assumption \ref{assu summability 2}.
\end{proof}

\subsection{Proof of Theorem \protect\ref{thm Wald local alternative}}

\begin{proof}
Write
\begin{align*}
&W_{n,T}=T(\bar{y}-\mu_0)^{\intercal}\tilde{\Sigma}^{-1}(\bar{y}-\mu_0)=T\sbr[1]{\bar{y}-\mu_T+\mu_T-\mu_0}^{\intercal}\tilde{\Sigma}^{-1}\sbr[1]{\bar{y}-\mu_T+\mu_T-\mu_0}\notag\\
&=T(\bar{y}-\mu_T)^{\intercal}\tilde{\Sigma}^{-1}(\bar{y}-\mu_T)+2T(\mu_T-\mu_0)^{\intercal}\tilde{\Sigma}^{-1}(\bar{y}-\mu_T)+T(\mu_T-\mu_0)^{\intercal}\tilde{\Sigma}^{-1}(\mu_T-\mu_0)\notag\\
&=:W_{n,T,1}+\theta^{\intercal}\tilde{\Sigma}^{-1}\theta%\label{align random33}
\end{align*}
whence we have
\begin{align}
\label{align random31}
\frac{W_{n,T}-n}{\sqrt{2n\del[1]{1+\frac{2}{n}\theta^{\intercal}\Sigma^{-1}\theta}}}-\frac{\theta^{\intercal}\Sigma^{-1}\theta}{\sqrt{2n\del[1]{1+\frac{2}{n}\theta^{\intercal}\Sigma^{-1}\theta}}}=\frac{W_{n,T,1}-n}{\sqrt{2n\del[1]{1+\frac{2}{n}\theta^{\intercal}\Sigma^{-1}\theta}}}+\frac{\theta^{\intercal}(\tilde{\Sigma}^{-1}-\Sigma_{-1})\theta}{\sqrt{2n\del[1]{1+\frac{2}{n}\theta^{\intercal}\Sigma^{-1}\theta}}}.
\end{align}
We first consider the first term on the right side of (\ref{align random31}).
\begin{align}
\frac{W_{n,T,1}-n}{\sqrt{2n\del[1]{1+\frac{2}{n}\theta^{\intercal}\Sigma^{-1}\theta}}}&=\frac{T(\bar{y}-\mu_T)^{\intercal}\Sigma^{-1}(\bar{y}-\mu_T)+2T(\mu_T-\mu_0)^{\intercal}\Sigma^{-1}(\bar{y}-\mu_T)-n}{\sqrt{2n\del[1]{1+\frac{2}{n}\theta^{\intercal}\Sigma^{-1}\theta}}}\notag\\
&\qquad+\frac{T(\bar{y}-\mu_T)^{\intercal}(\tilde{\Sigma}^{-1}-\Sigma^{-1})(\bar{y}-\mu_T)+2T(\mu_T-\mu_0)^{\intercal}(\tilde{\Sigma}^{-1}-\Sigma^{-1})(\bar{y}-\mu_T)}{\sqrt{2n\del[1]{1+\frac{2}{n}\theta^{\intercal}\Sigma^{-1}\theta}}}.\label{align random32}
\end{align}
We show that the first term on the right side of (\ref{align random32}) converges in distribution under the local alternatives.
\begin{align*}
&\frac{T(\bar{y}-\mu_T)^{\intercal}\Sigma^{-1}(\bar{y}-\mu_T)+2T(\mu_T-\mu_0)^{\intercal}\Sigma^{-1}(\bar{y}-\mu_T)-n}{\sqrt{2n\del[1]{1+\frac{2}{n}\theta^{\intercal}\Sigma^{-1}\theta}}}\\
&=\frac{\sbr[1]{\frac{1}{\sqrt{T}}\sum_{t=1}^{T}(y_t-\mu_T)}^{\intercal}(L^{-1})^{\intercal}L^{-1}\sbr[1]{\frac{1}{\sqrt{T}}\sum_{t=1}^{T}(y_t-\mu_T)}+2\theta^{\intercal}(L^{-1})^{\intercal}L^{-1}\sbr[1]{\frac{1}{\sqrt{T}}\sum_{t=1}^{T}(y_t-\mu_T)}-n}{\sqrt{2n\del[1]{1+\frac{2}{n}\theta^{\intercal}\Sigma^{-1}\theta}}}\\
&=:\frac{\del[1]{\frac{1}{\sqrt{T}}\sum_{t=1}^{T}x_t}^{\intercal}\del[1]{\frac{1}{\sqrt{T}}\sum_{t=1}^{T}x_t}+2(L^{-1}\theta)^{\intercal}\del[1]{\frac{1}{\sqrt{T}}\sum_{t=1}^{T}x_t}-n}{\sqrt{2n\del[1]{1+\frac{2}{n}\theta^{\intercal}\Sigma^{-1}\theta}}}=:\frac{z_{T}^{\intercal}z_{T}+b_T^{\intercal}z_T-n}{\sqrt{2n\del[1]{1+\frac{2}{n}\theta^{\intercal}\Sigma^{-1}\theta}}}\\
&=\frac{\sum_{i=1}^{n}z_{T,i}^2+\sum_{i=1}^{n}b_{T,i}z_{T,i}-n}{\sqrt{2n\del[1]{1+\frac{2}{n}\theta^{\intercal}\Sigma^{-1}\theta}}}=:\frac{Q_{n,T}-n}{\sqrt{2n\del[1]{1+\frac{2}{n}\theta^{\intercal}\Sigma^{-1}\theta}}}.
\end{align*}
Note that for each $n\geq 1, T\geq 1$, $z_{T,1},\ldots, z_{T,n}$ are (mutually) independent under assumption (b) of the theorem and Assumption \ref{assu normality}(i). Under $H_1$, 
\begin{align*}
\mathbb{E}[z_{T,i}]&=\mathbb{E}\sbr[3]{\frac{1}{\sqrt{T}}\sum_{t=1}^{T}x_{t,i}}=0\\
\var(z_{T})&=\var\del[3]{\frac{1}{\sqrt{T}}\sum_{t=1}^{T}x_{t}}=I_n\\
\mathbb{E}[Q_{n,T}]&=\mathbb{E}\sbr[3]{\sum_{i=1}^{n}z_{T,i}^2+\sum_{i=1}^{n}b_{T,i}z_{T,i}}=\sum_{i=1}^{n}\mathbb{E}\sbr[1]{z_{T,i}^2}=n.
\end{align*}
We next calculate $\var(Q_{n,T})$.
\begin{align*}
&\var(Q_{n,T})=\var\del[3]{\sum_{i=1}^{n}z_{T,i}^2+\sum_{i=1}^{n}b_{T,i}z_{T,i}}=\sum_{i=1}^{n}\var\del[1]{z_{T,i}^2+b_{T,i}z_{T,i}}\\
&=\sum_{i=1}^{n}\mathbb{E}\sbr[1]{z_{T,i}^2+b_{T,i}z_{T,i}-\mathbb{E}z_{T,i}^2}^2=\sum_{i=1}^{n}\mathbb{E}\sbr[1]{z_{T,i}^2+b_{T,i}z_{T,i}-1}^2\\
&=\sum_{i=1}^{n}\sbr[2]{\del[1]{\mathbb{E}[z_{T,i}^4]-1}+2b_{T,i}\mathbb{E}[z_{T,i}^3]+b_{T,i}^2}=\sum_{i=1}^{n}\del[1]{\gamma_{z,i}+2}+2\sum_{i=1}^{n}b_{T,i}\mathbb{E}[z_{T,i}^3]+\sum_{i=1}^{n}b_{T,i}^2.
%&=2n\del[3]{1+\frac{1}{2T}\frac{1}{nT}\sum_{i=1}^{n}\sum_{t=1}^{T}\gamma_{x,t,i}}+2\sum_{i=1}^{n}b_{T,i}\mathbb{E}[z_{T,i}^3]+\sum_{i=1}^{n}b_{T,i}^2
\end{align*}
In (\ref{eqn random4}), we have already calculated that
\[\sum_{i=1}^{n}\del[1]{\gamma_{z,i}+2}=2n\del[3]{1+\frac{1}{2T}\frac{1}{nT}\sum_{i=1}^{n}\sum_{t=1}^{T}\gamma_{x,t,i}}.\]
We now calculate $\mathbb{E}[z_{T,i}^3]$.
\begin{align*}
&\mathbb{E}[z_{T,i}^3]=\mathbb{E}\sbr[3]{\frac{1}{\sqrt{T}}\sum_{t=1}^{T}x_{t,i}}^3=\mathbb{E}\sbr[3]{\frac{1}{T^{3/2}}\sum_{t=1}^{T}\sum_{s=1}^{T}\sum_{k=1}^{T}x_{t,i}x_{s,i}x_{k,i}}=\frac{1}{T^{3/2}}\sum_{t=1}^{T}\sum_{s=1}^{T}\sum_{k=1}^{T}\mathbb{E}\sbr[1]{x_{t,i}x_{s,i}x_{k,i}}\\
&=\frac{1}{T^{3/2}}\sum_{t=1}^{T}\mathbb{E}\sbr[1]{x_{t,i}^3}.
\end{align*}
Backing up, we have
\begin{align*}
\var(Q_{n,T})=2n\del[3]{1+\frac{1}{2T}\frac{1}{nT}\sum_{i=1}^{n}\sum_{t=1}^{T}\gamma_{x,t,i}}+\frac{2}{T^{3/2}}\sum_{i=1}^{n}b_{T,i}\sum_{t=1}^{T}\mathbb{E}\sbr[1]{x_{t,i}^3}+\sum_{i=1}^{n}b_{T,i}^2.
\end{align*}

We now verify conditions (ii) and (iii) of Theorem \ref{thm Keleijian}. For condition (ii), we have already verified in the proof of Theorem \ref{thm Wald statistics} that $\limsup_{T\to \infty} \sup_{n\geq 1}\sup_{1\leq i\leq n}\mathbb{E}\envert[1]{z_{T,i}}^{4+2\delta}<\infty$. Next,
\begin{align*}
\limsup_{n,T\to \infty}\frac{1}{n}\sum_{i=1}^{n}|b_{T,i}|^{2+\delta}=\limsup_{n\to \infty}\frac{1}{n}\sum_{i=1}^{n}\envert[1]{2(L^{-1}\theta)_i}^{2+\delta}=2^{2+\delta}\limsup_{n\to \infty}\frac{1}{n}\sum_{i=1}^{n}\envert[1]{(L^{-1}\theta)_i}^{2+\delta}<\infty
\end{align*}
via assumption (b) of the theorem. Thus condition (ii) of Theorem \ref{thm Keleijian} is met. Finally,
\begin{align*}
&\frac{1}{n}\var(Q_{n,T})=2\del[3]{1+\frac{1}{2T}\frac{1}{nT}\sum_{i=1}^{n}\sum_{t=1}^{T}\gamma_{x,t,i}}+\frac{2}{nT^{3/2}}\sum_{i=1}^{n}b_{T,i}\sum_{t=1}^{T}\mathbb{E}\sbr[1]{x_{t,i}^3}+\frac{1}{n}\sum_{i=1}^{n}b_{T,i}^2\\
&=2\del[3]{1+\frac{1}{2T}\frac{1}{nT}\sum_{i=1}^{n}\sum_{t=1}^{T}\gamma_{x,t,i}}+\frac{4}{\sqrt{T}}\frac{1}{nT}\sum_{i=1}^{n}\sum_{t=1}^{T}(L^{-1}\theta)_i\mathbb{E}\sbr[1]{x_{t,i}^3}+\frac{4}{n}\theta^{\intercal}\Sigma^{-1}\theta>0
\end{align*}
for large enough $n$ and $T$ because $\gamma_{x,t,i}>-3$ for all $t$ and $i$ by definition of the excess kurtosis. Thus condition (iii) of Theorem \ref{thm Keleijian} is met.

Thus we have
\begin{align*}
&\frac{T(\bar{y}-\mu_T)^{\intercal}\Sigma^{-1}(\bar{y}-\mu_T)+2T(\mu_T-\mu_0)^{\intercal}\Sigma^{-1}(\bar{y}-\mu_T)-n}{\sqrt{2n\del[1]{1+\frac{2}{n}\theta^{\intercal}\Sigma^{-1}\theta}}}=\frac{Q_{n,T}-n}{\sqrt{2n\del[1]{1+\frac{2}{n}\theta^{\intercal}\Sigma^{-1}\theta}}}\\
&= \frac{Q_{n,T}-n}{\sqrt{\var(Q_{n,T})}}\frac{\sqrt{\var(Q_{n,T})}}{\sqrt{2n\del[1]{1+\frac{2}{n}\theta^{\intercal}\Sigma^{-1}\theta}}}= \frac{Q_{n,T}-n}{\sqrt{\var(Q_{n,T})}}(1+o(1))\xrightarrow{d}N\del[1]{0,1}
\end{align*}
as $n,T\to\infty$.

We next show that the second term on the right side of (\ref{align random32}) is $o_p(1)$ under $H_1$
\begin{align}
&\frac{T\envert[1]{(\bar{y}-\mu_T)^{\intercal}(\tilde{\Sigma}^{-1}-\Sigma^{-1})(\bar{y}-\mu_T)}}{\sqrt{2n\del[1]{1+\frac{2}{n}\theta^{\intercal}\Sigma^{-1}\theta}}}+\frac{2T\envert[1]{(\mu_T-\mu_0)^{\intercal}(\tilde{\Sigma}^{-1}-\Sigma^{-1})(\bar{y}-\mu_T)}}{\sqrt{2n\del[1]{1+\frac{2}{n}\theta^{\intercal}\Sigma^{-1}\theta}}}\notag\\
&=\frac{\envert[1]{\del[1]{\frac{1}{\sqrt{T}}\sum_{t=1}^{T}(y_t-\mu_T)}^{\intercal}(\tilde{\Sigma}^{-1}-\Sigma^{-1})\del[1]{\frac{1}{\sqrt{T}}\sum_{t=1}^{T}(y_t-\mu_T)}}}{\sqrt{2n\del[1]{1+\frac{2}{n}\theta^{\intercal}\Sigma^{-1}\theta}}}+\frac{2\envert[1]{\theta^{\intercal}(\tilde{\Sigma}^{-1}-\Sigma^{-1})\del[1]{\frac{1}{\sqrt{T}}\sum_{t=1}^{T}(y_t-\mu_T)}}}{\sqrt{2n\del[1]{1+\frac{2}{n}\theta^{\intercal}\Sigma^{-1}\theta}}}\notag\\
&\leq \frac{\envert[1]{\del[1]{\frac{1}{\sqrt{T}}\sum_{t=1}^{T}(y_t-\mu_T)}^{\intercal}(\tilde{\Sigma}^{-1}-\Sigma^{-1})\del[1]{\frac{1}{\sqrt{T}}\sum_{t=1}^{T}(y_t-\mu_T)}}}{\sqrt{2n}}+\frac{2\envert[1]{\theta^{\intercal}(\tilde{\Sigma}^{-1}-\Sigma^{-1})\del[1]{\frac{1}{\sqrt{T}}\sum_{t=1}^{T}(y_t-\mu_T)}}}{\sqrt{2n}}\notag\\
&\leq \frac{1}{\sqrt{2n}}\del[3]{\max_{1\leq i\leq n}\envert[3]{\frac{1}{\sqrt{T}}\sum_{t=1}^{T}(y_{t,i}-\mu_{T,i})}}^2\|\tilde{\Sigma}^{-1}-\Sigma^{-1}\|_1\notag\\
&\qquad+\sqrt{\frac{2}{n}}\max_{1\leq i\leq n}|\theta_i| \del[3]{\max_{1\leq i\leq n}\envert[3]{\frac{1}{\sqrt{T}}\sum_{t=1}^{T}(y_{t,i}-\mu_{T,i})}}\|\tilde{\Sigma}^{-1}-\Sigma^{-1}\|_1\notag\\
&=O_p\del[3]{\frac{\log n}{\sqrt{n}}}\|\Sigma^{-1}\|_1\sbr[3]{O_{p}\del[3]{\sqrt{\frac{\log^3 n}{n^{2-\beta_1}T}}}+O_p\del[3]{\frac{\log^2n}{T}}}\notag\\
&=\frac{1}{n^{\beta_2}}\|\Sigma^{-1}\|_1\sbr[3]{O_{p}\del[3]{\sqrt{\frac{n^{2\beta_2-1}\log^5 n}{n^{2-\beta_1}T}}}+O_p\del[3]{\frac{n^{\beta_2-\frac{1}{2}}\log^3n}{T}}}\notag\\
&=O_{p}\del[3]{\sqrt{\frac{n^{2\beta_2+\beta_1-3}\log^5 n}{T}}}+O_p\del[3]{\frac{n^{\beta_2-\frac{1}{2}}\log^3n}{T}}=o_p(1)\label{align random33}
\end{align}
where the second equality is due to Lemma \ref{lemma rate for mu hat} and Theorem \ref{thm quadratic form final}(iv), and the fourth equality is due to Assumption \ref{assu summability 2}.

Backing up, in (\ref{align random32}), we hence have under $H_1$ as $n,T\to \infty$
\begin{align*}
\frac{W_{n,T,1}-n}{\sqrt{2n\del[1]{1+\frac{2}{n}\theta^{\intercal}\Sigma^{-1}\theta}}}\xrightarrow{d}N\del[1]{0,1}.
\end{align*}
We finally consider the second term on the right side of (\ref{align random31}), noting that
\begin{align*}
\frac{\theta^{\intercal}(\tilde{\Sigma}^{-1}-\Sigma^{-1})\theta}{\sqrt{2n\del[1]{1+\frac{2}{n}\theta^{\intercal}\Sigma^{-1}\theta}}}=o_p(1)
\end{align*}
using a similar device as that in (\ref{align random33}). Backing up, in (\ref{align random31}), we hence have under $H_1$ as $n,T\to \infty$
\begin{align*}
\frac{W_{n,T}-n}{\sqrt{2n\del[1]{1+\frac{2}{n}\theta^{\intercal}\Sigma^{-1}\theta}}}-\frac{\theta^{\intercal}\Sigma^{-1}\theta}{\sqrt{2n\del[1]{1+\frac{2}{n}\theta^{\intercal}\Sigma^{-1}\theta}}}\xrightarrow{d}N\del[1]{0,1}.
\end{align*}
\end{proof}

\subsection{Proof of Theorem \protect\ref{thm linear restrictions}}

\begin{proof}[Proof of Theorem \ref{thm linear restrictions}]
\begin{align}
&W_{n,T}^*:=T(R\bar{y}-r)^{\intercal}(R\tilde{\Sigma} R^{\intercal})^{-1}(R\bar{y}-r)\notag\\
&=T(R\bar{y}-r)^{\intercal}(R\Sigma R^{\intercal})^{-1}(R\bar{y}-r)-T(R\bar{y}-r)^{\intercal}\sbr[1]{(R\tilde{\Sigma} R^{\intercal})^{-1}-(R\Sigma R^{\intercal})^{-1}}(R\bar{y}-r)\label{align random35}
\end{align}
We now show that the first term of (\ref{align random35}) is asymptotically chi square distributed under $H_0$. Since $R$ has full row rank $q$ and $\lambda_{\min}(\Sigma)$ is bounded away from zero by an absolute positive constant, $R\Sigma R^{\intercal}$ has full rank $q$. Consider the Cholesky decomposition of $R\Sigma R^{\intercal}=L_RL_R^{\intercal}$, where $L_R$ is a $q\times q$ nonsingular lower triangular matrix with positive diagonal elements. Write
\begin{align*}
&T(R\bar{y}-r)^{\intercal}(R\Sigma R^{\intercal})^{-1}(R\bar{y}-r)=T(R\bar{y}-r)^{\intercal}(L_R^{-1})^{\intercal}L_R^{-1}(R\bar{y}-r)\\
&=\sbr[3]{\frac{1}{\sqrt{T}}\sum_{t=1}^{T}L_R^{-1}R(y_t-\mu)}^{\intercal}\sbr[3]{\frac{1}{\sqrt{T}}\sum_{t=1}^{T}L_R^{-1}R(y_t-\mu)}.
\end{align*}
Note that $L_R^{-1}R(y_1-\mu), L_R^{-1}R(y_2-\mu),\ldots, L_R^{-1}R(y_T-\mu)$ are independent random vectors in $\mathbb{R}^q$ with mean zero and variance matrix $I_q$. Then we can invoke a version of the multivariate central limit theorem to show $T^{-1/2}\sum_{t=1}^{T}L_R^{-1}R(y_t-\mu)\xrightarrow{d}N(0,I_q)$ as $n,T\to \infty$, whence we have $T(R\bar{y}-r)^{\intercal}(R\Sigma R^{\intercal})^{-1}(R\bar{y}-r)\xrightarrow{d}\chi^2_q$ as $n,T\to \infty$.

We now show that the second term of (\ref{align random35}) is $o_p(1)$ under $H_0$.
\begin{align*}
&\envert[2]{T(R\bar{y}-r)^{\intercal}\sbr[1]{(R\tilde{\Sigma} R^{\intercal})^{-1}-(R\Sigma R^{\intercal})^{-1}}(R\bar{y}-r)}\\
&=\envert[3]{\sum_{i=1}^{q}\sum_{j=1}^{q}\del[3]{\frac{1}{\sqrt{T}}\sum_{t=1}^{T}[R(y_{t}-\mu)]_i}\del[3]{\frac{1}{\sqrt{T}}\sum_{t=1}^{T}[R(y_{t}-\mu)]_j}\sbr[1]{(R\tilde{\Sigma} R^{\intercal})_{i,j}^{-1}-(R\Sigma R^{\intercal})_{i,j}^{-1}}}\\
&\leq \del[3]{\max_{1\leq i\leq q}\envert[3]{\frac{1}{\sqrt{T}}\sum_{t=1}^{T}[R(y_{t}-\mu)]_i}}^2 \enVert[1]{(R\tilde{\Sigma} R^{\intercal})^{-1}-(R\Sigma R^{\intercal})^{-1}}_1\\
&=O_p(1)\enVert[1]{(R\tilde{\Sigma} R^{\intercal})^{-1}-(R\Sigma R^{\intercal})^{-1}}_1.
\end{align*}
We need to find a rate for $\enVert[1]{(R\tilde{\Sigma} R^{\intercal})^{-1}-(R\Sigma R^{\intercal})^{-1}}_1$. First, note that
\begin{align*}
&\enVert[1]{R\tilde{\Sigma} R^{\intercal}-R\Sigma R^{\intercal}}_1\leq q^{3/2}\enVert[1]{R(\tilde{\Sigma} -\Sigma) R^{\intercal}}_{\ell_2}\leq q^{3/2}\|R\|_{\ell_2}\|\tilde{\Sigma} -\Sigma\|_{\ell_2} \|R^{\intercal}\|_{\ell_2}\\
&=q^{3/2}\|R^{\intercal}\|_{\ell_2}^2\|\tilde{\Sigma} -\Sigma\|_{\ell_2}=q^{3/2}\lambda_{\max}(RR^{\intercal})\|\tilde{\Sigma} -\Sigma\|_{\ell_2}\\
&=q^{3/2}\lambda_{\max}(RR^{\intercal})\|\Sigma\|_{\ell_2}\sbr[3]{O_{p}\del[3]{\sqrt{\frac{\log^3 n}{n^{2-\beta_1}T}}}+O_p\del[3]{\frac{\log^2 n}{T}}}=o_p(1),
\end{align*}
where the second to last equality is due to Theorem \ref{thm quadratic form final}(v), and the last equality is due to (\ref{eqn random2}). Second, 
\begin{align*}
&\enVert[1]{(R\Sigma R^{\intercal})^{-1}}_1\leq q^{3/2}\enVert[1]{(R\Sigma R^{\intercal})^{-1}}_{\ell_2}=q^{3/2}\lambda_{\max}\sbr[1]{(R\Sigma R^{\intercal})^{-1}}=\frac{q^{3/2}}{\lambda_{\min}\sbr[1]{R\Sigma R^{\intercal}}}\leq \frac{q^{3/2}}{\lambda_{\min}(R R^{\intercal})\lambda_{\min}(\Sigma )}\\
&=O(1)
\end{align*}
where the last inequality is due to Lemma \ref{lemmasandwich eigenvalue} in Section \ref{sec AppendixB} and the last equality is due to the assumption of the theorem. Then via Lemma \ref{lemma saikkonen lemma} in Section \ref{sec AppendixB} we have
\begin{align*}
\enVert[1]{(R\tilde{\Sigma} R^{\intercal})^{-1}-(R\Sigma R^{\intercal})^{-1}}_1=q^{3/2}\lambda_{\max}(RR^{\intercal})\|\Sigma\|_{\ell_2}\sbr[3]{O_{p}\del[3]{\sqrt{\frac{\log^3 n}{n^{2-\beta_1}T}}}+O_p\del[3]{\frac{\log^2 n}{T}}}=o_p(1).
\end{align*}
Backing up, we have proved that the second term of (\ref{align random35}) is $o_p(1)$ under $H_0$.
\end{proof}

\subsection{Proof of Lemma \protect\ref{lemma simu confi}}

\begin{proof}[Proof of Lemma \ref{lemma simu confi}]
The assumptions of the lemma allow us to invoke Theorem \ref{thm Wald statistics}. Thus under $H_{0}:\mu =\mu_0$, as $n,T\rightarrow \infty $, 
 \begin{equation*}
 \frac{W_{n,T}-n}{\sqrt{2n}}=\frac{T(\bar{y}-\mu_0)^{\intercal }\tilde{\Sigma}^{-1}%
 (\bar{y}-\mu_0)-n}{\sqrt{2n}}\xrightarrow{d}N(0,1).
% \label{eqn wald weak convergence to standard normal}
 \end{equation*}
This implies that for any unknown $\mu$
\begin{align*}
\mathbb{P}_{\mu}\del[3]{\frac{T(\bar{y}-\mu)^{\intercal }\tilde{\Sigma}^{-1}%
 (\bar{y}-\mu)-n}{\sqrt{2n}}<z_{\alpha}}\to 1-\alpha
\end{align*}
as $n,T\to \infty$, where $z_{\alpha}$ is the upper $\alpha$ percentile of $N(0,1)$.

Invoking Lemma \ref{lemma general cauchy} in Section \ref{sec AppendixB} with $x=\bar{y}-\mu$ and $S=\tilde{\Sigma}$ yields: For any $\phi \in \mathbb{R}^n$
\begin{align*}
\sbr[1]{\phi^{\intercal}(\bar{y}-\mu)}^2\leq \phi^{\intercal}\tilde{\Sigma}\phi \cdot (\bar{y}-\mu)^{\intercal}\tilde{\Sigma}^{-1}(\bar{y}-\mu)
\end{align*}
whence we have
\begin{align*}
\frac{\sbr[1]{\phi^{\intercal}(\bar{y}-\mu)}^2}{\phi^{\intercal}\tilde{\Sigma}\phi}\leq (\bar{y}-\mu)^{\intercal}\tilde{\Sigma}^{-1}(\bar{y}-\mu).
\end{align*}
Multiply both sides by $T$, minus $n$, and divide by $\sqrt{2n}$:
\begin{align*}
\frac{T\sbr[1]{\phi^{\intercal}(\bar{y}-\mu)}^2/\phi^{\intercal}\tilde{\Sigma}\phi-n}{\sqrt{2n}}\leq \frac{T(\bar{y}-\mu)^{\intercal}\tilde{\Sigma}^{-1}(\bar{y}-\mu)-n}{\sqrt{2n}}.
\end{align*}
Thus we assert with confidence $1-\alpha$ that the unknown $\mu$ satisfies \textit{simultaneously for all} $\phi$ the inequalities:
\[\frac{T\sbr[1]{\phi^{\intercal}(\bar{y}-\mu)}^2/\phi^{\intercal}\tilde{\Sigma}\phi-n}{\sqrt{2n}}<z_{\alpha},\]
as $n,T\to \infty$.
\end{proof}

\section{Auxiliary Lemmas}

\label{sec AppendixB}

\begin{lemma}
\label{lemma rate for mu hat} Suppose Assumption \ref{assu normality}%
(i)-(ii) hold. Then we have 
\begin{equation*}
\max_{1\leq i\leq n}\envert[3]{\frac{1}{\sqrt{T}}\sum_{t=1}^{T}%
\del[1]{y_{t,i}-\mathbb{E}y_{t,i}}}=O_{p}(\sqrt{\log n}).
\end{equation*}
\end{lemma}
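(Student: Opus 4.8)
The plan is to establish a sub-exponential tail bound for each coordinate separately and then pass to the maximum by a union bound. Fix $i\in\{1,\ldots,n\}$, write $X_{t,i}:=y_{t,i}-\mathbb{E}y_{t,i}$, and set $S_{T,i}:=\frac{1}{\sqrt{T}}\sum_{t=1}^{T}X_{t,i}$; by Assumption \ref{assu normality}(i) the summands are independent over $t$, and they are mean-zero by construction. First I would verify a Bernstein-type moment condition. Using $|a-b|^{m}\leq 2^{m-1}(|a|^{m}+|b|^{m})$ together with Jensen's inequality $|\mathbb{E}y_{t,i}|^{m}\leq \mathbb{E}|y_{t,i}|^{m}$ gives $\mathbb{E}|X_{t,i}|^{m}\leq 2^{m}\mathbb{E}|y_{t,i}|^{m}$, so that Assumption \ref{assu normality}(ii) yields, uniformly in $i$,
\begin{equation*}
\frac{1}{T}\sum_{t=1}^{T}\mathbb{E}|X_{t,i}|^{m}\leq (2A)^{m}\leq \frac{m!}{2}\,(4A^{2})\,(2A)^{m-2},\qquad m=2,3,\ldots,
\end{equation*}
where the second inequality uses $m!\geq 2$ for $m\geq 2$. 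Note that Assumption \ref{assu normality}(ii) is stated precisely as a Cesàro average over $t$, which is exactly the quantity entering the non-identically-distributed form of Bernstein's inequality, so no uniform-in-$t$ moment bound is required.

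Next I would apply Bernstein's inequality with variance proxy $v=4A^{2}$ and scale $b=2A$, both absolute constants. For every $s>0$ this gives, uniformly in $i$,
\begin{equation*}
\mathbb{P}\del[3]{|S_{T,i}|>s}=\mathbb{P}\del[4]{\envert[4]{\frac{1}{T}\sum_{t=1}^{T}X_{t,i}}>\frac{s}{\sqrt{T}}}\leq 2\exp\del[3]{-\frac{s^{2}}{2\del[1]{4A^{2}+2As/\sqrt{T}}}}.
\end{equation*}
Setting $s=M\sqrt{\log n}$ and taking a union bound over $i=1,\ldots,n$ then produces
\begin{equation*}
\mathbb{P}\del[3]{\max_{1\leq i\leq n}|S_{T,i}|>M\sqrt{\log n}}\leq 2n\exp\del[4]{-\frac{M^{2}\log n}{2\del[1]{4A^{2}+2AM\sqrt{\log n/T}}}}.
\end{equation*}
The final step is to choose $M$ large enough to drive this bound to zero. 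In the joint-limit framework to which the lemma is applied one has $\log n/T\to 0$ (this is implied, e.g., by the $\log^{3}n/T\to 0$ maintained in the theorems that invoke it), so the cross term $2AM\sqrt{\log n/T}$ in the denominator is eventually at most $1$ and the exponent is bounded above by $-M^{2}\log n/\bigl(2(4A^{2}+1)\bigr)$. The right-hand side is then at most $2\exp\del[1]{\bigl(1-M^{2}/(2(4A^{2}+1))\bigr)\log n+\log 2}$, which tends to $0$ as soon as $M^{2}>2(4A^{2}+1)$. Since such an $M$ exists, $\max_{1\leq i\leq n}|S_{T,i}|=O_{p}(\sqrt{\log n})$, the claim.

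The main obstacle — and the one point requiring care — is the exponential (heavy-tail) cross term $2As/\sqrt{T}$ in the Bernstein denominator. If $\log n/T$ were permitted to diverge, then at the threshold $s=M\sqrt{\log n}$ each $S_{T,i}$ would exit its Gaussian regime and the maximum over the $n$ coordinates would grow like $\log n$ rather than $\sqrt{\log n}$, so the bound would fail; indeed, for fixed $T$ the stated rate is false. It is exactly the joint-limit condition $\log n/T\to 0$ (in fact $\log n/T=O(1)$ suffices, since the coefficient of $\log n$ in the exponent grows without bound in $M$) that keeps every $S_{T,i}$ in its Gaussian regime at scale $M\sqrt{\log n}$. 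The remaining ingredients — the centring/convexity moment bound and the factorial-growth verification — are routine.
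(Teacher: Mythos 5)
Your proof is correct and follows essentially the same route as the paper's: the same centring-plus-Jensen verification of the Bernstein moment condition (variance proxy $4A^{2}$), Bernstein's inequality for independent non-identically-distributed summands, and a union bound at threshold $M\sqrt{\log n}$ --- the paper merely packages the last two steps through its own (equivalent, re\-parametrized) statement of Bernstein's inequality and Corollary B.1 with $\epsilon = 2\log n/(T\sigma_{0}^{2})$. The one point worth noting is that you make explicit the condition $\log n/T=O(1)$, which the paper uses only tacitly in its final step, where it writes $O_{p}\bigl(\tfrac{\log n}{T}\vee\sqrt{\tfrac{\log n}{T}}\bigr)=O_{p}\bigl(\sqrt{\tfrac{\log n}{T}}\bigr)$; both arguments need it, and it holds wherever the lemma is invoked.
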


\begin{proof}
Under Assumption \ref{assu normality}(ii), we have, for $i=1,\ldots, n$, $m=2,3,\ldots$, 
\begin{align*}
&\frac{1}{T}\sum_{t=1}^{T}\mathbb{E}\envert[1]{y_{t,i}-\mathbb{E}y_{t,i}}^m\leq \frac{1}{T}\sum_{t=1}^{T}2^{m-1}\del[1]{\mathbb{E}|y_{t,i}|^m+\mathbb{E}|\mathbb{E}y_{t,i}|^m}\leq \frac{1}{T}\sum_{t=1}^{T}2^{m-1}\del[1]{\mathbb{E}|y_{t,i}|^m+\mathbb{E}|y_{t,i}|^m}\\
&=2^m\frac{1}{T}\sum_{t=1}^{T}\mathbb{E}|y_{t,i}|^m\leq 2^mA^m \leq 2m!A^m=\frac{m!}{2}A^{m-2}A^24
\end{align*}
for some absolute positive constant $A$. Now invoke the Bernstein's inequality in Section \ref{sec AppendixB} with $\sigma_0^2=4A^2$: 
For all $\epsilon>0$
\[\mathbb{P}\del[3]{ \envert[3]{ \frac{1}{T}\sum_{t=1}^{T}(y_{t,i}-\mathbb{E}y_{t,i})}\geq \sigma_0^2\left[ A\epsilon+\sqrt{2\epsilon}\right] }\leq 2e^{-T\sigma_0^2\epsilon}. \]
Invoking Corollary \ref{coro Bernstein follow up rate trick} in Section \ref{sec AppendixB}, we have
\[\max_{1\leq i\leq n}\envert[3]{ \frac{1}{T}\sum_{t=1}^{T}(y_{t,i}-\mathbb{E}y_{t,i})}=O_p\del[3]{\frac{\log n}{T}\vee \sqrt{\frac{\log n}{T}}}=O_p\del[3]{ \sqrt{\frac{\log n}{T}}}.\]
The lemma follows.
\end{proof}

\bigskip

We next give two central limit theorems for double-index ($n,T$) processes. 
%This is more general than central limit theorems for triangular arrays.

\begin{thm}
\label{thmdoubleindexCLT}

\item 
\begin{enumerate}[(a)]

\item Suppose $Y_{n, t}$ is a random variable independent across $1\leq
t\leq T$ for $n\geq 1$ and $T\geq 1$. Assume that 
\begin{equation*}
\mathbb{E}[Y_{n,t}]=0\qquad\mathbb{E}[Y_{n,T,t}^{2}]=\sigma_{n,t}^{2}.
\end{equation*}
Define 
\begin{equation*}
s_{n,T}^{2}:=\sum_{t=1}^{T}\sigma_{n,t}^{2}\qquad\xi_{n,T,t}:=\frac{Y_{n,t}}{%
s_{n,T}}.
\end{equation*}
Assume that $s_{n,T}^2>0$ for large enough $n$ and $T$. Suppose the
following Lyapounov's condition holds: For some $\delta>0$, 
\begin{equation*}
\lim_{n,T\to\infty}\sum_{t=1}^{T}\frac{1}{s_{n,T}^{2+\delta}}\mathbb{E} %
\envert[1]{Y_{n,t}}^{2+\delta}=0.
\end{equation*}
Then as $n,T\to\infty$ 
\begin{equation*}
\sum_{t=1}^{T}\xi_{n,T,t}\xrightarrow{d}N(0,1).
\end{equation*}

\item Suppose $Y_{T, i}$ is a random variable independent across $1\leq
i\leq n$ for $n\geq 1$ and $T\geq 1$. Assume that 
\begin{equation*}
\mathbb{E}[Y_{T,i}]=0\qquad\mathbb{E}[Y_{T,i}^{2}]=\sigma_{T,i}^{2}.
\end{equation*}
Define 
\begin{equation*}
s_{n,T}^{2}:=\sum_{i=1}^{n}\sigma_{T,i}^{2}\qquad\xi_{n,T,i}:=\frac{Y_{T,i}}{%
s_{n,T}}.
\end{equation*}
Assume that $s_{n,T}^2>0$ for large enough $n$ and $T$. Suppose the
following Lyapounov's condition holds: For some $\delta>0$, 
\begin{equation*}
\lim_{n,T\to\infty}\sum_{i=1}^{n}\frac{1}{s_{n,T}^{2+\delta}}\mathbb{E}%
\envert[1]{Y_{T,i}}^{2+\delta}=0.
\end{equation*}
Then as $n,T\to\infty$ 
\begin{equation*}
\sum_{i=1}^{n}\xi_{n,T,i}\xrightarrow{d}N(0,1).
\end{equation*}
\end{enumerate}
\end{thm}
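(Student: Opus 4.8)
The plan is to establish both parts by the characteristic-function method via L\'evy's continuity theorem, regarding the summands as a triangular array whose ``row'' is indexed by the pair $(n,T)$. Parts (a) and (b) are identical after relabelling the summation index (sum over $t$ versus sum over $i$, with the roles of $n$ and $T$ interchanged), so I would prove (a) in detail and remark that (b) follows verbatim. A preliminary reduction is useful: if the Lyapounov condition holds for some $\delta>0$, then it holds for every $\delta'\in(0,\delta]$, because H\"older's inequality applied to the interpolation $\mathbb{E}|\xi_{n,T,t}|^{2+\delta'}\le(\mathbb{E}|\xi_{n,T,t}|^{2})^{1-\theta}(\mathbb{E}|\xi_{n,T,t}|^{2+\delta})^{\theta}$ with $\theta=\delta'/\delta$ gives $\sum_{t}\mathbb{E}|\xi_{n,T,t}|^{2+\delta'}\le(\sum_{t}\mathbb{E}|\xi_{n,T,t}|^{2+\delta})^{\theta}\to0$, using $\sum_{t}\mathbb{E}\xi_{n,T,t}^{2}=\sum_{t}\sigma_{n,t}^{2}/s_{n,T}^{2}=1$. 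Hence I may assume $\delta\in(0,1]$. Fixing $u\in\mathbb{R}$ and writing $\phi_{n,T,t}(u):=\mathbb{E}[e^{iu\xi_{n,T,t}}]$, row-wise independence gives that the characteristic function of $\sum_{t=1}^{T}\xi_{n,T,t}$ equals $\prod_{t=1}^{T}\phi_{n,T,t}(u)$, and it suffices to show this product tends to $e^{-u^{2}/2}$ as $n,T\to\infty$.

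The first key step is a uniform Taylor bound. From $|e^{ix}-1-ix+x^{2}/2|\le\min(|x|^{3}/6,|x|^{2})\le|x|^{2+\delta}$ (valid for $\delta\in(0,1]$), together with $\mathbb{E}[\xi_{n,T,t}]=0$, I obtain
\begin{equation*}
\phi_{n,T,t}(u)=1-\frac{u^{2}\sigma_{n,t}^{2}}{2s_{n,T}^{2}}+R_{n,T,t}(u),\qquad |R_{n,T,t}(u)|\le|u|^{2+\delta}\,\mathbb{E}|\xi_{n,T,t}|^{2+\delta}.
\end{equation*}
Summing over $t$ and invoking the Lyapounov hypothesis yields $\sum_{t=1}^{T}|R_{n,T,t}(u)|\le|u|^{2+\delta}\sum_{t=1}^{T}\mathbb{E}|\xi_{n,T,t}|^{2+\delta}\to0$ in the joint limit.

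The second key step compares $\prod_{t}\phi_{n,T,t}(u)$ with $\prod_{t}\exp(-u^{2}\sigma_{n,t}^{2}/(2s_{n,T}^{2}))=e^{-u^{2}/2}$. Because each $\phi_{n,T,t}(u)$ is a characteristic function and each exponential factor is $\exp$ of a nonpositive real, all factors have modulus at most one, so the telescoping inequality $|\prod_{t}z_{t}-\prod_{t}w_{t}|\le\sum_{t}|z_{t}-w_{t}|$ applies. Combining the Taylor remainder with $|e^{-a}-(1-a)|\le a^{2}$ for $a\ge0$ gives
\begin{equation*}
\Bigl|\prod_{t=1}^{T}\phi_{n,T,t}(u)-e^{-u^{2}/2}\Bigr|\le\sum_{t=1}^{T}|R_{n,T,t}(u)|+\frac{u^{4}}{4}\sum_{t=1}^{T}\Bigl(\frac{\sigma_{n,t}^{2}}{s_{n,T}^{2}}\Bigr)^{2}.
\end{equation*}
The first sum vanishes as above. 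For the second I would bound $\sum_{t}(\sigma_{n,t}^{2}/s_{n,T}^{2})^{2}\le\max_{t}(\sigma_{n,t}^{2}/s_{n,T}^{2})$, and show the maximum is asymptotically negligible via the power-mean inequality $(\sigma_{n,t}^{2}/s_{n,T}^{2})^{(2+\delta)/2}=(\mathbb{E}\xi_{n,T,t}^{2})^{(2+\delta)/2}\le\mathbb{E}|\xi_{n,T,t}|^{2+\delta}\le\sum_{s}\mathbb{E}|\xi_{n,T,s}|^{2+\delta}\to0$. Thus the product converges to $e^{-u^{2}/2}$ for every $u$, and L\'evy's continuity theorem delivers (a); part (b) is the same argument with $t,T$ replaced by $i,n$.

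The main obstacle -- indeed essentially the only subtlety beyond the textbook single-index Lyapounov CLT -- is that every estimate must be a genuine joint $(n,T)\to\infty$ limit rather than an iterated one, since the Phillips--Moon framework forbids passing the two indices to infinity sequentially. The characteristic-function route is well adapted to this: it reduces weak convergence to the convergence of a scalar quantity indexed by $(n,T)$ to a fixed limit, and both controlling sums above are exactly of the form appearing in the joint Lyapounov hypothesis, so no sequential limit is ever invoked. An alternative would be to verify that Lyapounov's condition implies Lindeberg's via $\xi^{2}\mathbf{1}(|\xi|>\varepsilon)\le\varepsilon^{-\delta}|\xi|^{2+\delta}$ and then cite a joint-limit Lindeberg CLT (e.g.\ Theorem 2 of Phillips and Moon), but the self-contained argument above avoids any appeal to an external joint central limit theorem.
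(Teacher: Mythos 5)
Your proof is correct. For comparison: the paper does not actually write out an argument --- its entire proof is the sentence ``The proofs can be easily adapted from Lyapounov's condition for triangular arrays (cf.\ page 362 of Billingsley, 1995)'' --- so what you have produced is precisely the adaptation the paper leaves to the reader, written out in full. Two features of your write-up are worth noting. First, the textbook route proves the Lindeberg theorem by characteristic functions and then deduces the Lyapounov case via the truncation bound $\mathbb{E}[\xi^{2}\mathbf{1}(|\xi|>\varepsilon)]\le\varepsilon^{-\delta}\,\mathbb{E}|\xi|^{2+\delta}$ (the alternative you mention at the end); you instead feed the $(2+\delta)$-th moments directly into the Taylor remainder, after the clean interpolation step reducing to $\delta\in(0,1]$ so that $\min(|x|^{3}/6,|x|^{2})\le|x|^{2+\delta}$ applies --- all of your inequalities (the H\"older interpolation using $\sum_{t}\mathbb{E}\xi_{n,T,t}^{2}=1$, the telescoping product bound for factors of modulus at most one, the bound $\sum_{t}(\sigma_{n,t}^{2}/s_{n,T}^{2})^{2}\le\max_{t}\sigma_{n,t}^{2}/s_{n,T}^{2}$ and its negligibility via Jensen) check out. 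Second, and this is exactly the point the paper's one-line proof glosses over, you keep every estimate as a genuine joint $(n,T)$ bound: the remainder sum, the maximal variance ratio, and the squared-variance sum are all dominated by the joint Lyapounov quantity, so L\'evy's continuity theorem can be applied along arbitrary sequences $(n_{k},T_{k})$ with both indices diverging, which is what joint convergence in distribution requires. In short, both arguments belong to the same characteristic-function family, but yours is self-contained and makes the joint-limit adaptation --- the only nontrivial content of the theorem beyond the textbook statement --- explicit.
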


\begin{proof}
The proofs can be easily adapted from Lyapounov's condition for triangular arrays (cf. page 362 \cite{billingsley1995}).
\end{proof}

\bigskip

\begin{thm}[Bernstein's inequality]
%%Last checked on 20160511; highly trustworthy
We let $Z_{1},\ldots, Z_{T}$ be independent random variables, satisfying for
absolute positive constants $A$ and $\sigma_{0}^{2}$ 
\begin{equation*}
\mathbb{E}Z_{t}=0 \quad\forall t, \quad\frac{1}{T}\sum_{t=1}^{T}\mathbb{E}%
|Z_{t}|^{m}\leq\frac{m!}{2}A^{m-2}\sigma_{0}^{2}, \quad m=2,3,\ldots.
\end{equation*}
Let $\epsilon>0$ be arbitrary. Then 
\begin{equation*}
\mathbb{P}\del[3]{\envert[3]{ \frac{1}{T}\sum_{t=1}^{T}Z_t } \geq
\sigma_0^2\left[ A\epsilon+\sqrt{2\epsilon}\right]}\leq
2e^{-T\sigma_{0}^{2}\epsilon}.
\end{equation*}
\end{thm}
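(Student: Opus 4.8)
The plan is to run the classical Chernoff / exponential-moment argument. The one genuine subtlety is that the threshold $\sigma_0^2[A\epsilon+\sqrt{2\epsilon}]$ is precisely the point at which the sharp Cram\'er transform of the Bernstein moment-generating-function bound equals $T\sigma_0^2\epsilon$, so the crux will be to plug in the \emph{exactly} optimizing $\lambda$ (not the familiar lossy surrogate).

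\textbf{Step 1 (reduction to one tail).} Since $\mathbb{E}|{-Z_t}|^m=\mathbb{E}|Z_t|^m$, the family $\{-Z_t\}_{t=1}^T$ satisfies exactly the same hypotheses as $\{Z_t\}_{t=1}^T$. It therefore suffices to establish the one-sided bound $\mathbb{P}(T^{-1}\sum_{t=1}^T Z_t\ge \sigma_0^2[A\epsilon+\sqrt{2\epsilon}])\le e^{-T\sigma_0^2\epsilon}$. Applying this to both $\{Z_t\}$ and $\{-Z_t\}$ and taking a union bound supplies the absolute value and the factor $2$.

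\textbf{Step 2 (moment generating function).} Write $S_T:=\sum_{t=1}^T Z_t$. For $0<\lambda<1/A$, I would factor $\mathbb{E}[e^{\lambda S_T}]=\prod_{t=1}^T\mathbb{E}[e^{\lambda Z_t}]$ by independence, Taylor-expand each factor, and drop the first-order term using $\mathbb{E}Z_t=0$. Then, using $|\mathbb{E}Z_t^m|\le\mathbb{E}|Z_t|^m$, the elementary bound $\log(1+x)\le x$, the hypothesis in the form $\sum_{t=1}^T\mathbb{E}|Z_t|^m\le T\tfrac{m!}{2}A^{m-2}\sigma_0^2$, and summing the resulting geometric series, I obtain $\log\mathbb{E}[e^{\lambda S_T}]\le\sum_{m\ge 2}\tfrac{\lambda^m}{m!}\sum_{t=1}^T\mathbb{E}|Z_t|^m\le \tfrac{T\sigma_0^2}{2}\sum_{m\ge2}\lambda^m A^{m-2}=\tfrac{T\sigma_0^2\lambda^2}{2(1-\lambda A)}$, valid for $0<\lambda A<1$.

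\textbf{Step 3 (Chernoff with the optimizing $\lambda$).} By Markov's inequality applied to $e^{\lambda S_T}$, for the threshold $r:=T\sigma_0^2[A\epsilon+\sqrt{2\epsilon}]$ one has $\mathbb{P}(S_T\ge r)\le\exp\!\big(-\lambda r+\tfrac{T\sigma_0^2\lambda^2}{2(1-\lambda A)}\big)$. I would choose $\lambda:=\sqrt{2\epsilon}\big/(1+A\sqrt{2\epsilon})$, which lies in $(0,1/A)$ and satisfies the clean identity $1-\lambda A=1/(1+A\sqrt{2\epsilon})$. Substituting this $\lambda$ and simplifying, the exponent collapses to exactly $-T\sigma_0^2\epsilon$, which is the one-sided bound; together with Step 1 this proves the inequality. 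The main obstacle is exactly this choice: the threshold equals the value at which the Legendre--Fenchel conjugate of $\lambda\mapsto \tfrac{T\sigma_0^2\lambda^2}{2(1-\lambda A)}$ equals $T\sigma_0^2\epsilon$, so one must use the precise maximizer rather than the convenient surrogate $\lambda=r/(T\sigma_0^2+Ar)$, which only yields $\exp\!\big(-r^2/[2(T\sigma_0^2+Ar)]\big)$ and hence a strictly weaker exponent. The remaining computations (the Taylor/geometric bound and the algebraic simplification of the exponent) are routine.
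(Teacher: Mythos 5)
Your proof is correct: the one-sided Chernoff bound with the MGF estimate $\log\mathbb{E}[e^{\lambda S_T}]\leq T\sigma_0^2\lambda^2/[2(1-\lambda A)]$ and the choice $\lambda=\sqrt{2\epsilon}/(1+A\sqrt{2\epsilon})$ does make the exponent collapse exactly to $-T\sigma_0^2\epsilon$, and the symmetry/union-bound step supplies the absolute value and the factor $2$. The paper itself offers no written proof (it only cites B\"uhlmann and van de Geer (2011), p.~487), and the argument given there is precisely this exponential-moment calculation, so your proposal is essentially the same approach, spelled out in full.
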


\begin{proof}
Slightly adapted from \cite{buhlmannvandegeer2011} p487.
\end{proof}

\bigskip

We can use Bernstein's inequality to establish a rate for the maximum.

\begin{corollary}
\label{coro Bernstein follow up rate trick} Suppose via Bernstein's
inequality that we have for $1\leq i\leq n$, 
\begin{equation*}
\mathbb{P}\del[4]{\envert[3]{ \frac{1}{T}\sum_{t=1}^{T}Z_{t,i} } \geq
\sigma^2_0\sbr[1]{ K\epsilon+\sqrt{2\epsilon}}}\leq
2e^{-T\sigma^{2}_{0}\epsilon}.
\end{equation*}
for some absolute positive constants $K$ and $\sigma_{0}^{2}$. Then 
\begin{equation*}
\max_{1\leq i\leq n}\envert[3]{ \frac{1}{T}\sum_{t=1}^{T}Z_{t,i} }=O_{p}%
\del[3]{\frac{\log n}{T}\vee \sqrt{\frac{\log n}{T}}}.
\end{equation*}
\end{corollary}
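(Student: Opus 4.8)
The plan is to turn the per-index Bernstein tail bound into a maximal inequality by a union bound over $i=1,\ldots,n$, choosing the free parameter $\epsilon$ to grow like $\log n/T$ so that the factor of $n$ from the union bound is swamped by the exponential decay. Abbreviate $S_{i}:=\frac{1}{T}\sum_{t=1}^{T}Z_{t,i}$. The first step is simply to write, for any $\epsilon>0$,
\[
\mathbb{P}\del[3]{\max_{1\leq i\leq n}\envert[1]{S_i}\geq \sigma_0^2\sbr[1]{K\epsilon+\sqrt{2\epsilon}}}\leq \sum_{i=1}^{n}\mathbb{P}\del[3]{\envert[1]{S_i}\geq \sigma_0^2\sbr[1]{K\epsilon+\sqrt{2\epsilon}}}\leq 2ne^{-T\sigma_0^2\epsilon},
\]
where the last inequality is the hypothesised Bernstein bound applied term by term.

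The second step is to calibrate $\epsilon$. I would set $\epsilon=\epsilon_{n,T}:=2\log n/(T\sigma_0^2)$, which makes the right-hand side equal to $2n\cdot n^{-2}=2/n\to 0$. With this choice the threshold simplifies to
\[
\sigma_0^2\sbr[1]{K\epsilon_{n,T}+\sqrt{2\epsilon_{n,T}}}=\frac{2K\log n}{T}+2\sigma_0\sqrt{\frac{\log n}{T}}\leq (2K+2\sigma_0)\del[3]{\frac{\log n}{T}\vee \sqrt{\frac{\log n}{T}}},
\]
so that $\max_{1\leq i\leq n}\envert[1]{S_i}$ is bounded by a fixed constant multiple of $(\log n/T)\vee\sqrt{\log n/T}$ on an event of probability at least $1-2/n$.

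To conclude the $O_p$ statement, given any $\eta>0$ I would take the constant $M:=2K+2\sigma_0$ and then $N$ large enough that $2/n<\eta$ whenever $n\geq N$; the displayed probability is then below $\eta$ for all large $n,T$, which is exactly the definition of $\max_{1\leq i\leq n}\envert[1]{S_i}=O_{p}\del[1]{(\log n/T)\vee\sqrt{\log n/T}}$. The argument is a routine maximal-inequality computation and there is no substantive obstacle beyond the choice of $\epsilon$. The only point worth flagging is \emph{why} the rate is a maximum of two terms rather than a single power of $\log n/T$: the Bernstein threshold $K\epsilon+\sqrt{2\epsilon}$ has both a linear and a square-root piece, and under the scaling $\epsilon\asymp\log n/T$ the linear piece dominates when $\log n/T$ is large (i.e.\ $T$ small relative to $\log n$) while the square-root piece dominates in the usual regime $\log n/T\to 0$, so no single monomial captures both.
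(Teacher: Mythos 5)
Your proposal is correct and follows essentially the same route as the paper's proof: a union bound over $i$ combined with the calibration $\epsilon = 2\log n/(T\sigma_0^2)$, which turns the tail bound into $2/n$ and the threshold into $2K\log n/T + 2\sigma_0\sqrt{\log n/T} \leq (2K+2\sigma_0)\del[1]{\tfrac{\log n}{T}\vee\sqrt{\tfrac{\log n}{T}}}$. The paper's proof uses the constant $M_{\varepsilon}=\max(4K,4\sigma_0)$ instead of your $2K+2\sigma_0$, but this is immaterial.
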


\begin{proof}
We need to use joint asymptotics $n,T\to \infty$. We shall use the preceding inequality with $\epsilon=(2\log n)/(T\sigma_0^2)$. Fix $\varepsilon>0$. These exist $N_{\varepsilon}:=2/\varepsilon$, $T_{\varepsilon}$ and $M_{\varepsilon}:=\max (4K, 4\sigma_0)$ such that for all $n>N_{\varepsilon}$ and $T>T_{\varepsilon}$ we have
\begin{align*}
&\mathbb{P}\del[4]{\max_{1\leq i\leq n}\envert[3]{ \frac{1}{T}\sum_{t=1}^{T}Z_{t,i} } \geq M_{\varepsilon}\del[3]{\frac{\log n}{T}\vee \sqrt{\frac{\log n}{T}}}}\\
&\leq \sum_{i=1}^{n}\mathbb{P}\del[4]{\envert[3]{ \frac{1}{T}\sum_{t=1}^{T}Z_{t,i} } \geq \sigma^2_0\sbr[1]{ K\epsilon+\sqrt{2\epsilon}}}\leq 2e^{\log n-2\log n}=\frac{2}{n}< \varepsilon.
\end{align*}
\end{proof}

\bigskip

\begin{lemma}
\label{lemmasandwich eigenvalue} Suppose matrix $A$ is real symmetric. Then
for any comparable real matrix $B$ 
\begin{equation*}
\lambda_{\min}\del[1]{A}\lambda_{\min}\del[1]{BB^{\intercal}}\leq\lambda
_{\min}\del[1]{BAB^{\intercal}}\leq\lambda_{\max}\del[1]{BAB^{\intercal}}%
\leq \lambda_{\max}\del[1]{A}\lambda_{\max}\del[1]{BB^{\intercal}}.
\end{equation*}
\end{lemma}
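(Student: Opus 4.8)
The plan is to reduce everything to the variational (Rayleigh quotient) characterisation of the extreme eigenvalues of a real symmetric matrix. First observe that $BAB^{\intercal}$ is itself real symmetric, since $(BAB^{\intercal})^{\intercal}=BA^{\intercal}B^{\intercal}=BAB^{\intercal}$ using $A=A^{\intercal}$; hence its eigenvalues are real and the middle inequality $\lambda_{\min}(BAB^{\intercal})\leq\lambda_{\max}(BAB^{\intercal})$ is immediate. It therefore remains only to establish the two outer bounds.

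For the upper bound I would write $\lambda_{\max}(BAB^{\intercal})=\max_{\Vert x\Vert_2=1}x^{\intercal}BAB^{\intercal}x$ and substitute $y:=B^{\intercal}x$, so that $x^{\intercal}BAB^{\intercal}x=y^{\intercal}Ay$. The Rayleigh bound for the symmetric matrix $A$ gives $y^{\intercal}Ay\leq\lambda_{\max}(A)\Vert y\Vert_2^{2}=\lambda_{\max}(A)\,x^{\intercal}BB^{\intercal}x$, which holds for either sign of $\lambda_{\max}(A)$. Since $BB^{\intercal}$ is symmetric positive semidefinite, $0\leq\lambda_{\min}(BB^{\intercal})\Vert x\Vert_2^{2}\leq x^{\intercal}BB^{\intercal}x\leq\lambda_{\max}(BB^{\intercal})\Vert x\Vert_2^{2}$. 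Combining these, for $\Vert x\Vert_2=1$ one obtains $x^{\intercal}BAB^{\intercal}x\leq\lambda_{\max}(A)\lambda_{\max}(BB^{\intercal})$, which is the claimed upper bound. The lower bound follows from the mirror-image argument, using $y^{\intercal}Ay\geq\lambda_{\min}(A)\Vert y\Vert_2^{2}=\lambda_{\min}(A)\,x^{\intercal}BB^{\intercal}x$ together with $x^{\intercal}BB^{\intercal}x\geq\lambda_{\min}(BB^{\intercal})\Vert x\Vert_2^{2}$ to get $\lambda_{\min}(BAB^{\intercal})\geq\lambda_{\min}(A)\lambda_{\min}(BB^{\intercal})$.

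The one step that requires care — and the only real obstacle — is the sign bookkeeping when passing from $\lambda_{\max}(A)\,x^{\intercal}BB^{\intercal}x$ to $\lambda_{\max}(A)\lambda_{\max}(BB^{\intercal})\Vert x\Vert_2^{2}$, and likewise for the minimum. Because $x^{\intercal}BB^{\intercal}x\geq0$, replacing it by its eigenvalue bound preserves the direction of the inequality precisely when $\lambda_{\max}(A)\geq0$ (respectively $\lambda_{\min}(A)\geq0$ for the lower bound). In the application of interest $A=\Sigma$ is a positive semidefinite covariance matrix, so both sign conditions hold automatically and the argument closes without modification; I would flag this positivity at the point of use, where the lemma is invoked with $A=\Sigma$ and $B=R$ to obtain $\lambda_{\min}(R\Sigma R^{\intercal})\geq\lambda_{\min}(RR^{\intercal})\lambda_{\min}(\Sigma)$.
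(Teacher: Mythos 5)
Your proof is correct and is essentially the paper's own argument: the paper likewise writes $\lambda_{\max}(BAB^{\intercal})=\max_{\|c\|_2=1}c^{\intercal}BAB^{\intercal}c$, applies the Rayleigh bound for $A$ to the vector $B^{\intercal}c$, and then bounds $c^{\intercal}BB^{\intercal}c$ by the extreme eigenvalues of $BB^{\intercal}$ (and symmetrically for the minimum). The one point where you go beyond the paper is the sign bookkeeping: the paper silently factors $\lambda_{\max}(A)$ out of the maximum (and $\lambda_{\min}(A)$ out of the minimum), a step valid only when these quantities are nonnegative, and indeed the lemma as stated fails for general symmetric $A$ (take $A=-I$ and $BB^{\intercal}$ with eigenvalues $1$ and $4$: then $\lambda_{\max}(BAB^{\intercal})=-1$, which exceeds $\lambda_{\max}(A)\lambda_{\max}(BB^{\intercal})=-4$). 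Your explicit observation that the argument requires $\lambda_{\max}(A)\geq 0$ and $\lambda_{\min}(A)\geq 0$, both of which hold at the point of use since the lemma is invoked with $A=\Sigma$ positive semidefinite, is a genuine, if minor, refinement of the paper's proof.
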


\begin{proof}
First, note that $BAB^{\intercal}$ is Hermitian. By Rayleigh-Ritz theorem, we have
\begin{align*}
& \lambda_{\max}\del[1]{BAB^{\intercal}}=\max_{\|c\|_2=1}c^{\intercal}BAB^{\intercal}c\leq \max_{\|c\|_2=1} \lambda_{\max}(A)\|B^{\intercal}c\|^2=\lambda_{\max}(A)\max_{\|c\|_2=1} c^{\intercal}BB^{\intercal}c\\
& = \lambda_{\max}\del[1]{A}\lambda_{\max}\del[1]{BB^{\intercal}}.
\end{align*}
On the other hand,
\begin{align*}
& \lambda_{\min}\del[1]{BAB^{\intercal}}=\min_{\|c\|_2=1}c^{\intercal}BAB^{\intercal}c\geq \min_{\|c\|_2=1} \lambda_{\min}(A)\|B^{\intercal}c\|^2=\lambda_{\min}(A)\min_{\|c\|_2=1} c^{\intercal}BB^{\intercal}c\\
& = \lambda_{\min}\del[1]{A}\lambda_{\min}\del[1]{BB^{\intercal}}.
\end{align*}
\end{proof}

%\bigskip

%\begin{lemma}
%\label{lemma spectral norm for Dnplus and Dn} Given the $n^{2}\times
%n(n+1)/2 $ duplication matrix $D_{n}$ and its Moore-Penrose generalized
%inverse $D_{n}^{+}=(D_{n}^{\intercal}D_{n})^{-1}D_{n}^{\intercal}$ (i.e., $%
%D_{n}$ is full-column rank), we have

%\begin{enumerate}[(i)]
%\item 
%\begin{equation*}
%\lambda_{\max}(D_{n}^{+}D_{n}^{+\intercal})=1\qquad\lambda_{%
%\min}(D_{n}^{+}D_{n}^{+\intercal})=\frac{1}{2}.
%\end{equation*}

%\item $\|D_{n}^{+}\|_{\ell_{2}}=1$.
%\end{enumerate}
%\end{lemma}

%\begin{proof}
%First note that $D_n^{\intercal}D_n$ is a diagonal matrix with diagonal entries either 1 or 2.  Using the fact that for any real matrix $A$, $AA^{\intercal}$ and $A^{\intercal}A$ have the same non-zero eigenvalues, we have
%\begin{align*}
%\lambda_{\max}(D_n^+D_n^{+\intercal})&=\lambda_{\max}((D_n^{\intercal}D_n)^{-1})=1\\
%\lambda_{\min}(D_n^+D_n^{+\intercal})&=\lambda_{\min}((D_n^{\intercal}D_n)^{-1})=1/2\\
%\|D_n^{+}\|_{\ell_2}^2&=\lambda_{\max}(D_n^{+\intercal}D_n^+)=\lambda_{\max}(D_n^+D_n^{+\intercal})=1\\
%\end{align*}
%\end{proof}

%\bigskip

\begin{lemma}
\label{lemma F norm Kronecker} For any real matrices $A$ and $B$,

\begin{enumerate}[(i)]

\item 
\begin{equation*}
\|A\otimes B\|_{F}=\|A\|_{F}\times\|B\|_{F}.
\end{equation*}

\item 
\begin{equation*}
\|A\otimes B\|_{\ell_{2}}=\|A\|_{\ell_{2}}\times\| B\|_{\ell_{2}}.
\end{equation*}

\item 
\begin{equation*}
\|A\otimes B\|_{1}=\|A\|_{1}\times\|B\|_{1}.
\end{equation*}
\end{enumerate}
\end{lemma}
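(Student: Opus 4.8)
The plan is to exploit the explicit entrywise structure of the Kronecker product together with a few standard spectral facts, treating the three parts separately since they invoke different norms. Throughout I would write $A=(a_{ij})$ and $B=(b_{kl})$, and use repeatedly that the entries of $A\otimes B$ are precisely the products $a_{ij}b_{kl}$, arranged into blocks, together with the mixed-product property $(A\otimes B)(C\otimes D)=(AC)\otimes(BD)$.

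For part (i) I would compute the squared Frobenius norm directly. Since $\|A\otimes B\|_F^2$ is the sum of squares of all entries of $A\otimes B$, and each entry is $a_{ij}b_{kl}$, the resulting double sum factors as $\del[1]{\sum_{i,j}a_{ij}^2}\del[1]{\sum_{k,l}b_{kl}^2}=\|A\|_F^2\|B\|_F^2$, and taking square roots gives the claim. An equivalent route is the trace identity $\|A\otimes B\|_F^2=\tr\del[1]{(A\otimes B)^\intercal(A\otimes B)}=\tr\del[1]{(A^\intercal A)\otimes(B^\intercal B)}=\tr(A^\intercal A)\tr(B^\intercal B)$, where the middle equality uses the mixed-product property and the last uses $\tr(X\otimes Y)=\tr(X)\tr(Y)$. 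Part (iii) is handled in exactly the same spirit: $\|A\otimes B\|_1$ is the sum of absolute values of all entries, which factors as $\del[1]{\sum_{i,j}|a_{ij}|}\del[1]{\sum_{k,l}|b_{kl}|}=\|A\|_1\|B\|_1$. Both of these are purely arithmetic and require no spectral input.

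For part (ii) I would pass to eigenvalues. Using the mixed-product property, $(A\otimes B)^\intercal(A\otimes B)=(A^\intercal A)\otimes(B^\intercal B)$, so that $\|A\otimes B\|_{\ell_2}^2=\lambda_{\max}\del[1]{(A^\intercal A)\otimes(B^\intercal B)}$. The eigenvalues of a Kronecker product are the pairwise products of the eigenvalues of the factors; hence this maximum eigenvalue equals $\max_{a,b}\lambda_a(A^\intercal A)\lambda_b(B^\intercal B)$. Because $A^\intercal A$ and $B^\intercal B$ are positive semidefinite, all their eigenvalues are nonnegative, so the maximum of the products is the product of the maxima, namely $\lambda_{\max}(A^\intercal A)\lambda_{\max}(B^\intercal B)$. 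Taking square roots yields $\|A\otimes B\|_{\ell_2}=\|A\|_{\ell_2}\|B\|_{\ell_2}$.

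The only step requiring genuine care is this last one in part (ii): the collapse of $\max_{a,b}\lambda_a\lambda_b$ into $\lambda_{\max}(A^\intercal A)\lambda_{\max}(B^\intercal B)$ relies essentially on positive semidefiniteness of the factors, since if eigenvalues of differing sign were allowed the maximizing product could instead arise from two large-magnitude negatives. Apart from that observation, the lemma is essentially bookkeeping with the entries and the spectrum of $A\otimes B$, and no delicate estimate is involved.
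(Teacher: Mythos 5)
Your proposal is correct and follows essentially the same route as the paper: the paper proves (i) by the trace identity $\tr\del[1]{(A^\intercal A)\otimes(B^\intercal B)}=\tr(A^\intercal A)\tr(B^\intercal B)$ (which you cite as your equivalent route), proves (ii) via the mixed-product property and the fact that $A^\intercal A$ and $B^\intercal B$ are symmetric positive semidefinite, and proves (iii) by the same entrywise factorization of the double sum. Your added remark about why positive semidefiniteness is essential in collapsing $\max_{a,b}\lambda_a\lambda_b$ to the product of maxima is exactly the point the paper compresses into one clause, so nothing is missing.
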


\begin{proof}
For part (i),
\begin{align*}
\|A\otimes B\|_F^2=\tr \sbr[1]{(A^{\intercal}\otimes B^{\intercal})(A\otimes B)}=\tr \sbr[1]{A^{\intercal}A\otimes B^{\intercal}B}=\tr (A^{\intercal}A)\tr ( B^{\intercal}B)=\|A\|_F^2\|B\|_F^2.
\end{align*}
For part (ii),
\begin{align*}
&\|A\otimes B\|_{\ell_2}=\sqrt{\text{maxeval}[(A\otimes B)^{\intercal}(A\otimes B)]}=\sqrt{\text{maxeval}[(A^{\intercal}\otimes B^{\intercal})(A\otimes B)]}\\
&=\sqrt{\text{maxeval}[A^{\intercal}A\otimes B^{\intercal}B]}=\sqrt{\text{maxeval}[A^{\intercal}A]\text{maxeval}[B^{\intercal}B]}=\|A\|_{\ell_2}\| B\|_{\ell_2},
\end{align*}
where the fourth equality is due to the fact that both $A^{\intercal}A$ and $B^{\intercal}B$ are symmetric and positive semidefinite.
For part (iii), suppose that $A$ is $m\times n$ and $B$ is $p\times q$.
\begin{align*}
\|A\otimes B\|_1 &=\sum_{i=1}^{m}\sum_{j=1}^{n}\del[1]{|a_{i,j}|\|B\|_1}=\sum_{i=1}^{m}\sum_{j=1}^{n}\del[3]{|a_{i,j}|\sum_{k=1}^{p}\sum_{\ell=1}^{q}|b_{k,\ell}|}=\del[3]{\sum_{i=1}^{m}\sum_{j=1}^{n}|a_{i,j}|}\del[3]{\sum_{k=1}^{p}\sum_{\ell=1}^{q}|b_{k,\ell}|}\\
&=\|A\|_1\|B\|_1.
\end{align*}
\end{proof}

\bigskip

\begin{lemma}
\label{lemma saikkonen lemma} Let $\hat{\Omega}_{n,j}$ and $\Omega_{n,j}$ be
invertible (both possibly stochastic) $n\times n$ square matrices for $%
j=1,\ldots,m$, where both $n$ and $m$ could be growing. Let $T$ be the
sample size. For any matrix norm $\|\cdot\|$, suppose that $\max_{1\leq
j\leq m}\|\Omega_{n,j}^{-1}\|=O_{p}(1)$ and $\max_{1\leq j\leq m}\|\hat{%
\Omega }_{n,j}-\Omega_{n,j}\|=O_{p}(a_{m,n,T})$ for some sequence $a_{m,
n,T} $ with $a_{m,n,T}\to0$ as $m,n, T\to\infty$ simultaneously. Then $%
\max_{1\leq j\leq m}\|\hat{\Omega}_{n,j}^{-1}-\Omega_{n,j}^{-1}%
\|=O_{p}(a_{m,n,T})$.
\end{lemma}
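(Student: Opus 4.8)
The plan is to rely on the standard resolvent-type identity for the difference of two matrix inverses together with submultiplicativity of the norm (which holds here because the paper's convention reserves the term ``matrix norm'' for genuinely submultiplicative norms, in contrast to $\|\cdot\|_\infty$). Writing $E_j:=\hat{\Omega}_{n,j}^{-1}-\Omega_{n,j}^{-1}$, I would start from the algebraic identity
\[
E_j=-\Omega_{n,j}^{-1}\del[1]{\hat{\Omega}_{n,j}-\Omega_{n,j}}\hat{\Omega}_{n,j}^{-1},
\]
valid whenever both matrices are invertible. The only real difficulty is that the right-hand side still involves the perturbed inverse $\hat{\Omega}_{n,j}^{-1}$, the very object we are trying to control, so the first task is to decouple it.

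I would do this by substituting $\hat{\Omega}_{n,j}^{-1}=\Omega_{n,j}^{-1}+E_j$ back into the identity and applying submultiplicativity, which yields the self-referential bound
\[
\|E_j\|\leq \|\Omega_{n,j}^{-1}\|\cdot\|\hat{\Omega}_{n,j}-\Omega_{n,j}\|\cdot\del[1]{\|\Omega_{n,j}^{-1}\|+\|E_j\|}.
\]
An equivalent and slightly cleaner alternative is the Neumann-series route: setting $\Delta_j:=\Omega_{n,j}^{-1}(\hat{\Omega}_{n,j}-\Omega_{n,j})$ and writing $\hat{\Omega}_{n,j}=\Omega_{n,j}(I+\Delta_j)$ gives $E_j=-(I+\Delta_j)^{-1}\Delta_j\Omega_{n,j}^{-1}$, and whenever $\|\Delta_j\|<1$ one has $\|(I+\Delta_j)^{-1}\|\leq (1-\|\Delta_j\|)^{-1}$; but I would present the self-referential version since it needs nothing beyond submultiplicativity.

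The key step is passing from these per-$j$ bounds to the uniform one. Abbreviate $b:=\max_{1\leq j\leq m}\|\Omega_{n,j}^{-1}\|$, $\delta:=\max_{1\leq j\leq m}\|\hat{\Omega}_{n,j}-\Omega_{n,j}\|$, and $e:=\max_{1\leq j\leq m}\|E_j\|$; each is finite because $m$ is finite at every stage and all matrices are invertible. Bounding the factors on the right of the displayed inequality by $b$, $\delta$, $e$ and then maximising the left over $j$ gives the deterministic relation $e\leq b\delta(b+e)$, i.e. $e(1-b\delta)\leq b^2\delta$. By hypothesis $b=O_p(1)$ and $\delta=O_p(a_{m,n,T})$ with $a_{m,n,T}\to0$, so $b\delta=O_p(a_{m,n,T})=o_p(1)$ and the event $\{b\delta<1/2\}$ has probability tending to one. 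On that event I may divide to obtain $e\leq b^2\delta/(1-b\delta)\leq 2b^2\delta$.

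Finally I would convert this into the stochastic-order statement. Since $b^2=O_p(1)$, $\delta=O_p(a_{m,n,T})$ and $(1-b\delta)^{-1}=O_p(1)$, the product $b^2\delta/(1-b\delta)$ is $O_p(a_{m,n,T})$; because the bound $e\leq 2b^2\delta$ holds on an event of probability tending to one, a routine $\varepsilon$-argument (splitting on $\{b\delta<1/2\}$ and its complement) delivers $e=\max_{1\leq j\leq m}\|E_j\|=O_p(a_{m,n,T})$, which is the claim. The main obstacle is exactly the decoupling of $\hat{\Omega}_{n,j}^{-1}$ noted above, i.e. turning the implicit inequality into an explicit one; the handling of the $O_p$ bounds and of the event $\{b\delta<1\}$ is routine but must be made explicit, since dividing by $1-b\delta$ is legitimate only once that factor is bounded away from zero with probability approaching one.
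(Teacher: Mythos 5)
Your proof is correct and follows essentially the same route as the paper's: the resolvent identity $\hat{\Omega}^{-1}-\Omega^{-1}=-\hat{\Omega}^{-1}(\hat{\Omega}-\Omega)\Omega^{-1}$ (you write it with the factors swapped, which is immaterial), the triangle-inequality decoupling $\|\hat{\Omega}^{-1}\|\leq\|\Omega^{-1}\|+\|E\|$, and then resolution of the resulting self-referential bound. The only difference is cosmetic and, if anything, in your favor: you take maxima over $j$ before solving the implicit inequality and justify the division by $1-b\delta$ on an event of probability tending to one, whereas the paper solves per-$j$ exactly and then passes to maxima with an equality that really should be an inequality valid on that same high-probability event.
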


\begin{proof}
The original proof could be found in \cite{saikkonenlutkepohl1996} Lemma A.2.
\begin{align*}
\|\hat{\Omega}^{-1}_{n,j}-\Omega^{-1}_{n,j}\|\leq \|\hat{\Omega}_{n,j}^{-1}\|\|\Omega_{n,j}-\hat{\Omega}_{n,j}\|\|\Omega^{-1}_{n,j}\|\leq \del[1]{\|\Omega_{n,j}^{-1}\|+\|\hat{\Omega}_{n,j}^{-1}-\Omega_{n,j}^{-1}\|}\|\Omega_{n,j}-\hat{\Omega}_{n,j}\|\|\Omega^{-1}_{n,j}\|.
\end{align*}
Let $v_{j,n,T}$, $z_{j,n,T}$ and $x_{j,n,T}$ denote $\|\Omega_{j,n}^{-1}\|$, $\|\hat{\Omega}_{j,n}^{-1}-\Omega_{j,n}^{-1}\|$ and $\|\Omega_{j,n}-\hat{\Omega}_{j,n}\|$, respectively. From the preceding equation, we have
\[w_{j,n,T}:=\frac{z_{j,n,T}}{(v_{j,n,T}+z_{j,n,T})v_{j,n,T}}\leq x_{j,n,T},\]
whence we have $\max_{1\leq j\leq m}w_{j,n,T}\leq \max_{1\leq j\leq m}x_{j,n,T}=O_p(a_{m,n,T})=o_p(1)$. We now solve for $z_{j,n,T}$:
\[z_{j, n,T}=\frac{v_{j,n,T}^2w_{j,n,T}}{1-v_{j,n,T}w_{j,n,T}}.\]
Then we have
\begin{align*}
&\max_{1\leq j\leq m}z_{j, n,T}=\max_{1\leq j\leq m}\frac{v_{j,n,T}^2w_{j,n,T}}{1-v_{j,n,T}w_{j,n,T}}=\frac{\max_{1\leq j\leq m}v_{j,n,T}^2\max_{1\leq j\leq m}w_{j,n,T}}{1-\max_{1\leq j\leq m}v_{j,n,T}\max_{1\leq j\leq m}w_{j,n,T}}=O_p(a_{m,n,T})
\end{align*}
where the second equality is due to the fact that $0\leq v_{j,n,T}w_{j,n,T}\leq 1$ for any $j$.
\end{proof}

\bigskip

\bigskip

\begin{thm}
\label{thmBrillinger} Let $\{x_{t,i}\}$ be a double-index process having
zero mean and being independent across $1\leq t\leq T$ for $n\geq 1$ and $%
T\geq 1$. If there exists $k$, $k\geq 2$, such that 
\begin{equation*}
\max_{n\geq 1}\max_{1\leq i\leq n}\max_{T\geq 1}\max_{1\leq t\leq T}\mathbb{E%
}|x_{t,i}|^{k}<\infty,
\end{equation*}
then we have 
\begin{equation*}
\max_{n\geq 1}\max_{1\leq i\leq n}\max_{T\geq 1}\mathbb{E}%
\envert[3]{\frac{1}{\sqrt{T}}\sum_{t=1}^{T}x_{t,i}}^{k}\leq K
\end{equation*}
for some absolute positive constant $K$.
\end{thm}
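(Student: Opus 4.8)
The plan is to reduce the statement to the classical Marcinkiewicz--Zygmund inequality for sums of independent mean-zero random variables, combined with Minkowski's inequality, the latter being available precisely because $k\geq 2$. Fix arbitrary $n\geq 1$, $1\leq i\leq n$ and $T\geq 1$, and write $S_{T}:=\sum_{t=1}^{T}x_{t,i}$. By hypothesis the summands are independent across $t$ with $\mathbb{E}[x_{t,i}]=0$, and $M:=\max_{n\geq 1}\max_{1\leq i\leq n}\max_{T\geq 1}\max_{1\leq t\leq T}\mathbb{E}|x_{t,i}|^{k}<\infty$. Since the bound I will produce depends only on $k$ and $M$ --- neither of which depends on $(n,i,T)$ --- the desired uniform constant $K$ follows immediately upon taking the maximum over all indices at the end.

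First I would invoke the Marcinkiewicz--Zygmund inequality: there is a constant $B_{k}$ depending only on $k$ such that, for the independent mean-zero variables $x_{1,i},\dots,x_{T,i}$,
\[\mathbb{E}|S_{T}|^{k}\leq B_{k}\,\mathbb{E}\Big(\sum_{t=1}^{T}x_{t,i}^{2}\Big)^{k/2}.\]
The crucial point here is that $B_{k}$ is \emph{universal}, so this step introduces no dependence on $(n,i,T)$. Next, because $k\geq 2$ we have $k/2\geq 1$, so $\|\cdot\|_{k/2}$ is a genuine norm and Minkowski's (triangle) inequality applies to the sum $\sum_{t}x_{t,i}^{2}$:
\[\Big\|\sum_{t=1}^{T}x_{t,i}^{2}\Big\|_{k/2}\leq \sum_{t=1}^{T}\big\|x_{t,i}^{2}\big\|_{k/2}=\sum_{t=1}^{T}\big(\mathbb{E}|x_{t,i}|^{k}\big)^{2/k}\leq T M^{2/k}.\]
Raising to the power $k/2$ gives $\mathbb{E}\big(\sum_{t}x_{t,i}^{2}\big)^{k/2}\leq T^{k/2}M$, and substituting into the Marcinkiewicz--Zygmund bound yields $\mathbb{E}|S_{T}|^{k}\leq B_{k}T^{k/2}M$, hence
\[\mathbb{E}\Big|\frac{1}{\sqrt{T}}\sum_{t=1}^{T}x_{t,i}\Big|^{k}=\frac{\mathbb{E}|S_{T}|^{k}}{T^{k/2}}\leq B_{k}M.\]
Since the right-hand side is free of $n$, $i$ and $T$, setting $K:=B_{k}M$ and then taking the maximum over all indices completes the argument.

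The main (and essentially only) obstacle is conceptual rather than computational: guaranteeing that the constant is genuinely uniform across all three indices. This hinges on the universality of the Marcinkiewicz--Zygmund constant $B_{k}$ and on the requirement $k\geq 2$, which makes the $L^{k/2}$ triangle inequality available; had one only $1\leq k<2$, the quantity $\big(\sum_{t}x_{t,i}^{2}\big)^{k/2}$ could not be controlled by $\sum_{t}\mathbb{E}|x_{t,i}|^{k}$ in this simple fashion and one would instead need the von Bahr--Esseen inequality. As an equivalent route one could run the argument through Rosenthal's inequality, which for $k\geq 2$ bounds $\mathbb{E}|S_{T}|^{k}$ by $C_{k}\big[(\sum_{t}\mathbb{E}x_{t,i}^{2})^{k/2}+\sum_{t}\mathbb{E}|x_{t,i}|^{k}\big]$; dividing by $T^{k/2}$, the first term is at most $C_{k}M$ and the second equals $C_{k}M\,T^{1-k/2}\leq C_{k}M$, giving the same conclusion.
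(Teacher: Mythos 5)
Your proof is correct. The paper, however, gives no argument of its own: its entire ``proof'' of this theorem is the remark that it is slightly adapted from Brillinger (1962), deferring completely to that classical reference. Your route is therefore genuinely different in substance: you give a self-contained derivation via the Marcinkiewicz--Zygmund inequality, $\mathbb{E}|S_T|^k\leq B_k\,\mathbb{E}\bigl(\sum_{t=1}^T x_{t,i}^2\bigr)^{k/2}$, followed by Minkowski's inequality in $L^{k/2}$ (valid exactly because $k\geq 2$), yielding $\mathbb{E}\bigl|T^{-1/2}S_T\bigr|^k\leq B_k M$ with a constant depending only on $k$ and the uniform moment bound $M$. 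This makes explicit the only delicate point of the statement --- that the bound is uniform over all three indices $(n,i,T)$ --- which a reader of the paper must instead verify by consulting and adapting Brillinger's argument. Your remarks on where $k\geq 2$ enters (the triangle-inequality step, with von Bahr--Esseen as the substitute for $1\leq k<2$) and on the equivalent Rosenthal route are also accurate; the one small point worth adding in the Rosenthal variant is that bounding $\bigl(\sum_t\mathbb{E}x_{t,i}^2\bigr)^{k/2}$ by $T^{k/2}M$ uses Lyapunov's inequality $\mathbb{E}x_{t,i}^2\leq\bigl(\mathbb{E}|x_{t,i}|^k\bigr)^{2/k}$, a one-line step you implicitly assumed. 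In short: the paper's citation buys brevity; your argument buys verifiability at essentially no extra cost.
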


\begin{proof}
Slightly adapted from \cite{brilllinger1962}.
\end{proof}

\bigskip

\begin{lemma}[Generalised Cauchy-Schwarz Inequality]
\label{lemma general cauchy} For a positive definite matrix $S$ and any
vectors $\phi$ and $x$ 
\begin{equation*}
(\phi^{\intercal}x)^2\leq \phi^{\intercal}S\phi \cdot x^{\intercal}S^{-1}x.
\end{equation*}
\end{lemma}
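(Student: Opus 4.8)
The plan is to reduce the statement to the ordinary Cauchy--Schwarz inequality on $\mathbb{R}^n$ by square-rooting the quadratic form associated to $S$. Since $S$ is positive definite, the spectral theorem furnishes a symmetric positive definite square root $S^{1/2}$, which is invertible, and I write $S^{-1/2}:=(S^{1/2})^{-1}$, so that $S^{1/2}S^{1/2}=S$, $S^{-1/2}S^{-1/2}=S^{-1}$, and $S^{1/2}S^{-1/2}=I_n$. The first step is to insert this identity into the bilinear form:
\begin{equation*}
\phi^{\intercal}x=\phi^{\intercal}S^{1/2}S^{-1/2}x=(S^{1/2}\phi)^{\intercal}(S^{-1/2}x),
\end{equation*}
using the symmetry of $S^{1/2}$ in the last step.

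Setting $a:=S^{1/2}\phi$ and $b:=S^{-1/2}x$, the second step applies the usual Cauchy--Schwarz inequality $(a^{\intercal}b)^2\leq (a^{\intercal}a)(b^{\intercal}b)$. The two factors on the right evaluate, again by symmetry of $S^{1/2}$ and $S^{-1/2}$, to
\begin{equation*}
a^{\intercal}a=\phi^{\intercal}S^{1/2}S^{1/2}\phi=\phi^{\intercal}S\phi,\qquad b^{\intercal}b=x^{\intercal}S^{-1/2}S^{-1/2}x=x^{\intercal}S^{-1}x,
\end{equation*}
so that $(\phi^{\intercal}x)^2=(a^{\intercal}b)^2\leq \phi^{\intercal}S\phi\cdot x^{\intercal}S^{-1}x$, which is the claim.

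There is essentially no obstacle here: the only point requiring care is the existence and invertibility of the symmetric square root $S^{1/2}$, which is immediate from positive definiteness, and the degenerate case $\phi=0$ (or $x=0$), where both sides vanish and the inequality holds trivially. An equivalent route, which avoids the square root entirely, is to observe that $g(t):=(tS\phi-x)^{\intercal}S^{-1}(tS\phi-x)\geq 0$ for every $t\in\mathbb{R}$ by positive definiteness of $S^{-1}$; expanding yields the scalar quadratic $g(t)=t^2\phi^{\intercal}S\phi-2t\phi^{\intercal}x+x^{\intercal}S^{-1}x$, whose nonnegativity forces a nonpositive discriminant, i.e. $(2\phi^{\intercal}x)^2\leq 4\,\phi^{\intercal}S\phi\cdot x^{\intercal}S^{-1}x$, which is again the desired bound. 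I would present the square-root argument as the main proof for its brevity, and note the discriminant argument only as a remark.
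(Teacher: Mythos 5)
Your proof is correct. Note, however, that the paper does not actually supply an argument for this lemma: its ``proof'' is a one-line citation to Lemma 5.3.2 (p.~178) of \cite{anderson1984}, so there is nothing to compare step-by-step. Your square-root argument --- factoring $\phi^{\intercal}x=(S^{1/2}\phi)^{\intercal}(S^{-1/2}x)$ and invoking the ordinary Cauchy--Schwarz inequality --- is the standard self-contained derivation (and is essentially how the textbook result is usually proved), and your alternative via the nonnegativity of $g(t)=(tS\phi-x)^{\intercal}S^{-1}(tS\phi-x)$ and its discriminant is equally valid; it has the minor advantage of needing only the positive definiteness of $S^{-1}$ and the identity $SS^{-1}=I_n$, without constructing a matrix square root. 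Either version would serve as a complete replacement for the citation; the only hypotheses used are that $S$ is symmetric positive definite (hence $S^{1/2}$ exists and is symmetric invertible, and $S^{-1}$ is positive definite), which is exactly what the lemma assumes.
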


\begin{proof}
See Lemma 5.3.2 (p178) of \cite{anderson1984}.
\end{proof}

\bibliographystyle{ecta}
\bibliography{HLpartialm}

%Phillips, P. C. and Moon, H. R. (1999), Linear Regression Limit Theory for
%Nonstationary Panel Data. Econometrica, 67: 1057-1111. doi:10.1111/1468-0262.00070

%Jin Seo Cho, Peter C.B. Phillips, Pythagorean generalization of testing the
%equality of two symmetric positive definite matrices, Journal of Econometrics,
%Volume 202, Issue 1, 2018, Pages 45-56, ISSN 0304-4076,

%On the Formulation of Wald Tests of Nonlinear Restrictions

%Author(s): P. C. B. Phillips and Joon Y. Park Econometrica, Vol. 56, No. 5 (Sep., 1988), pp. 1065-1083

\end{document}